\newcommand{\arxiv}[1]{\href{http://arxiv.org/abs/#1}{\tt arXiv:\nolinkurl{#1}}}
\theoremstyle{plain}
\newtheorem{prop}{Proposition}[section]
\newtheorem{thm}[prop]{Theorem}
\newtheorem{lem}[prop]{Lemma}
\newtheorem{cor}[prop]{Corollary}
\newtheorem{Def}[prop]{Definition}
\newenvironment{psmallmatrix}
  {\left(\begin{smallmatrix}}
  {\end{smallmatrix}\right)}
\def\semicolon{;}
\def\applytolist#1{
    \expandafter\def\csname multi#1\endcsname##1{
        \def\multiack{##1}\ifx\multiack\semicolon
            \def\next{\relax}
        \else
            \csname #1\endcsname{##1}
            \def\next{\csname multi#1\endcsname}
        \fi
        \next}
    \csname multi#1\endcsname}
\newcommand*{\transpose}{%
  {\mathpalette\@transpose{}}%
}
\newcommand*{\@transpose}[2]{%
  \raisebox{\depth}{$\m@th#1\intercal$}%
}
\def\calc#1{\expandafter\def\csname c#1\endcsname{{\mathcal #1}}}
\def\bbc#1{\expandafter\def\csname bb#1\endcsname{{\mathbb #1}}}
\def\bfc#1{\expandafter\def\csname bf#1\endcsname{{\mathbf #1}}}
\DeclareMathOperator{\Gal}{Gal}
\newcommand{\iso}{\cong}
\title{Modular data for the extended Haagerup subfactor}
\author{Terry Gannon and Scott Morrison}   
\date{}
\begin{document}

\begin{abstract}
We compute the modular data (that is, the $S$ and $T$ matrices) for the centre of the extended Haagerup subfactor
\cite{MR2979509}. The
full structure (i.e. the associativity data, also known as 6-$j$ symbols or $F$ matrices) still appears to be 
inaccessible. Nevertheless, starting with just the number of simple objects and their dimensions (obtained by a
combinatorial argument in \cite{1404.3955}) we find that it is surprisingly easy to leverage knowledge of the
representation theory of $SL (2, \bbZ)$ into a complete description of the modular data. We also investigate the possible
character vectors associated with this modular data.
\end{abstract}

\maketitle
  
\section{Introduction}
The extended Haagerup subfactor provides perhaps the strangest currently known example of a \emph{quantum symmetry}. 

Fusion categories provide a suitable axiomatization for the notion of quantum symmetry: they are the finitely semisimple rigid tensor categories. The fundamental examples are the representation categories of finite groups (over $\mathbb C$), but there are many others. The semisimplified
representation category of a quantum enveloping algebra $U_q \mathfrak{g}$ at a suitable root of unity gives another source of examples.

The remarkable discovery of an interesting classification of finite depth subfactors above index 4, initiated by Haagerup \cite{MR1317352}, began to provide examples beyond these `classical' ones. In particular, each finite depth subfactor $N \subset M$ gives a pair of Morita equivalent unitary fusion categories, as the categories of $N-N$ and $M-M$ bimodules. Haagerup and Asaeda constructed `exotic' subfactors in \cite{MR1686551}, and the
last missing case in Haagerup's classification between index 4 and $3+\sqrt{3}$ was provided by the construction by Bigelow-Morrison-Peters-Snyder of the extended Haagerup subfactor  \cite{MR2979509}. Some of these fusion categories are distinctly different from those arising from finite groups or quantum groups: in particular the fusion categories coming from the Haagerup and extended Haagerup subfactors cannot be defined over any cyclotomic field \cite{MR2922607}.

Since the discovery of these examples, there has been some progress towards organising them. In particular, the theory 
of \emph{quadratic} categories has been developed, particularly by Izumi \cite{MR1832764,1512.04288} and Evans-Gannon 
\cite{MR2837122,MR3167494}. These are categories with a group of invertible objects, and under the action of this group 
by left and right tensor product, just one other double coset. The category of $N-N$ bimodules of the Haagerup subfactor
is a quadratic category. While the fusion categories coming from the Asaeda-Haagerup subfactor are not quadratic, work 
of Grossman-Izumi-Snyder \cite{1501.07324} shows that they are Morita equivalent to quadratic categories.

This leaves us with the following remarkable observation: the extended Haagerup fusion categories are the only known 
fusion categories not known to be related to finite groups, quantum groups, and quadratic categories. While this almost 
surely only reflects our feeble ability to discover and construct fusion categories, nevertheless these
categories remain uniquely interesting objects.

Every fusion category has a braided centre, which is a modular tensor category. This paper tackles the problem of 
describing the braided centre of the extended Haagerup categories. While we do not give a full description (in 
particular the associators), we produce the modular data, that is, the $S$ and $T$ matrices.

Recently, Morrison-Walker discovered  \cite{1404.3955} that a purely combinatorial argument determines the number of simple 
objects, and their dimensions, in the centre of extended Haagerup. This paper uses that just that information, and by 
representation theoretic arguments determines the modular data. 

More generally, fusion categories are notoriously difficult to classify, and we hope that the methods described here can
be developed into part of a machine for analysing potential new examples. As a precedent, the classifications of rank 2
and of rank 3 fusion categories \cite{MR1981895,1309.4822} have relied heavily on understanding the possible modular
centres. In fact, the arguments Sections \ref{sec:galois-action}, \ref{sec:group-of-12}, and \ref{sec:group-of-4} have been automated as part of a developing {\tt Mathematica} package, which for example can also perform the analogous arguments for the Haagerup and Asaede-Haagerup categories.

It seems likely that every unitary modular tensor category can be realised as the representation category for some 
strongly rational vertex operator algebra, and as such a CFT would offer at least some `explanation' for the existence 
of the extended Haagerup subfactor. We explore what can be said about such an object. In particular, we are able to 
describe the possible character vectors associated to such a CFT. For $c=8$ or $c =16$, we can completely 
enumerate them; for $c =24$ we at least show that there are plausible candidates.

Both the Haagerup and extended Haagerup subfactors see the prime 13. Is this a coincidence?
The 13 enters their modular data in apparently different ways: through the inequivalent irreps we call $\rho^{(13)}_5$ 
and $\rho^{(13)}_{14}$ for the
Haagerup and the extended Haagerup respectively. However $\rho^{(13)}_{14}$ lies in the symmetric square
of  $\rho^{(13)}_5$, and we will see in Section \ref{sec:EH-characters} that the possible character vectors for both 
(at the smallest possible value of central charge, namely $c=8$) are built from theta functions of the lattice  
$L=A_{3}{52}[1,\frac{1}{4}]$, using notation of \cite{MR1662447}.
In particular, our work suggests that there may be a natural relation between the (still hypothetical) 
extended Haagerup VOA $\cV_{EH}$ and the square $\cV_{Haag}\otimes
\cV_{Haag}$ of the (still hypothetical)  Haagerup VOA.

\section{Background}

Throughout we write $\xi_m=e^{2\pi i/m}$, $\bbZ_N=\bbZ/N\bbZ$.  

\subsection{From subfactors to modular tensor categories}

A \textit{fusion category} $\mathcal{C}$ is a $\bbC$-linear semi-simple rigid monoidal category with finitely many isomorphism classes of simple objects and finite-dimensional spaces of morphisms, such that the endomorphism algebra of the unit object 1 is $\bbC$. A $*$-operation on $\mathcal{C}$ 
is a conjugate-linear involution Hom$(x,y)\rightarrow \mathrm{Hom}(y,x)$ satisfying $(fg)^*=g^*f^*$ and
 $(f\otimes h)^*=f^*\otimes h^*$  for all $f\in\mathrm{Hom}(x,y)$, $g\in\mathrm{Hom}(z,y)$ and $h\in\mathrm{Hom}(z,w)$.
A $*$-operation is called positive if $f^*f=0$ implies $f=0$. A category equipped with a positive $*$-operation is called \textit{unitary} or $C^*$.

Given a finite index and depth subfactor $N\subset M$ of Type II$_1$ factors, we obtain two unitary fusion categories: the \textit{principal even part} consisting of the $N$-$N$ bimodules which occur as summands of
tensor powers of $_NM_N$, and the \textit{dual even part}, consisting of the $M$-$M$ bimodules occurring as summands of tensor powers of  $_MM\otimes_NM_M$.

Let $\mathcal{C}$ be any fusion category. 
Write $\Phi(\mathcal{C})$ for its set of isomorphism classes of simple objects. So rank$\,\cC=\|\Phi(\cC)\|$.
The Grothendieck ring $K(\mathcal{C})$ of $\mathcal{C}$ is also called its fusion ring. Given 
$[x],[y]\in\Phi(\mathcal{C})$,   the structure constants $N_{[x],[y]}^{[z]}\in\bbZ_{\ge 0}$ of the fusion ring
defined by $[x][y]=\sum_{[z]}N_{[x],[y]}^{[z]}[z]$ are called the \textit{fusion coefficients}. A \textit{dimension}
on $\mathcal{C}$ is a ring homomorphism from the fusion ring $K(\mathcal{C})$ to $\bbC$; the \textit{Perron-Frobenius dimension} PFdim of $\mathcal{C}$ is the unique dimension taking
positive real values on all non-zero objects. 

A \textit{modular tensor category} is a spherical braided fusion category $\mathcal{C}$ satisfying a certain nondegeneracy condition. Define a matrix $\tilde{S}$, with rows and columns indexed by $\Phi(\mathcal{C})$,
 by $\tilde{S}_{[x],[y]}=\mathrm{tr}_{x\otimes y}(c_{y,x}\circ c_{x,y})$, where $c_{x,y}$ is the braiding. Then $\tilde{S}$ is well-defined; the non-degeneracy condition is that $\tilde{S}$
  be invertible.  In a modular tensor category, $\tilde{S}$ is
  symmetric, and the values $\tilde{S}_{[1],[x]}$ define a dimension {dim}$(x)$ on $\mathcal{C}$. 
 If in addition $\mathcal{C}$ is unitary,  dim$=\mathrm{PFdim}$.
  
Given a fusion category $\mathcal{C}$, the \textit{(braided) centre} or \textit{(quantum) double construction}  associates to it
a modular tensor category $Z(\mathcal{C})$. The forgetful functor $Z(\mathcal{C})\rightarrow\mathcal{C}$
defines a ring homomorphism on the fusion rings and (hence) preserves dimensions. The forgetful functor has an adjoint
called the \textit{induction functor}.
Given a finite index and depth subfactor 
$N\subset M$, we obtain a (unitary) modular tensor category by applying the centre construction to its
principal even part. The modular tensor category associated to the dual even part will be equivalent,
but the two induction functors can carry independent information, as we'll see.

 Given a fusion category $\mathcal{C}$, or for that matter a subfactor $N\subset M$, it is very difficult
 to determine the centre $Z(\mathcal{C})$. A surprising discovery of Morrison--Walker is that it
 is often possible to determine a unique possibility for the induction functor at the level of the fusion
 rings. 
  
  Define a diagonal matrix $\tilde{T}$, with rows and columns indexed by $\Phi(\mathcal{C})$, by $\tilde{T}_{[x],[y]}=\delta_{[x],[y]} (\mathrm{tr}_x\otimes\mathrm{id}_x)(c_{x,x})$. Then $\tilde{T}$ is  well-defined and unitary. The assignment
  $s\mapsto \tilde{S}$, $t\mapsto \tilde{T}$ defines a projective representation of the modular group
  SL$(2,\bbZ)=\langle s,t\rangle$, where we put
  $s = \left(\begin{smallmatrix}0 & 1 \\ -1 & 0\end{smallmatrix}\right),
 t = \left(\begin{smallmatrix}1 & 1 \\ 0 & 1\end{smallmatrix}\right)$.
  The permutation matrix defined by $C_{[x],[y]}=\delta_{[y],[x^\vee ]}$ (where $x^\vee$ is the right or left dual of $x$) satisfies $C^2=I$ and commutes with
  both $\tilde{S}$ and $\tilde{T}$ --- it is often called \textit{charge-conjugation}. \textit{Verlinde's formula} computes the fusion coefficients of a 
  modular tensor category $\mathcal{C}$ in terms of $\tilde{S}$:
  \begin{equation}\label{Verlinde}N_{[x],[y]}^{[z]}=\mathcal{D}^{-2}\sum_{[w]\in\Phi(\mathcal{C})}\mathrm{dim}
  ([w])^{-1}\tilde{S}_{[x],[w]}\tilde{S}_{[y],[w]}\tilde{S}_{[z^\vee],[w]}\end{equation}
  where $\mathcal{D}^2=\sum_{[x]\in\Phi(\mathcal{C})}\mathrm{dim}([x])^2$.
  
\subsection{Modular data and congruence representations}
\label{sec:congruence-reps}

A modular tensor category is a fairly complicated beast. Remarkably, a highly constrained
combinatorial invariant of a modular tensor category seems close in practise to being a complete invariant.

\begin{Def} Let $\Phi$ be a finite set of labels, one of which (call it 1) is distinguished. By modular data
we mean matrices $S=(S_{xy})_{x,y\in\Phi}$, $T=(T_{xy})_{x,y\in\Phi}$ of complex numbers such that
\begin{enumerate}[(a)]
\item $S$ is unitary and symmetric; $T$ is unitary and diagonal;
\item $S_{1,x}\in\bbR^\times$ for all $x\in\Phi$; there is some $o\in\Phi$ such that $S_{o,x}>0$ for all $x\in\Phi$;
\item $S^2=(ST)^3$;
\item the numbers defined by
\begin{equation}N_{xy}^z=\sum_{w\in\Phi}\frac{S_{xw}S_{yw}\overline{S_{zw}}}{S_{1w}}\label{Verlinde2}
\end{equation}
are nonnegative integers, where the bar denotes complex conjugation.
\end{enumerate}
\end{Def} 

The matrices $\tilde{S},\tilde{T}$ coming from a modular tensor category can always be rescaled
so as to give modular data (with 1 being $[1]$) --- in particular, $S=\mathcal{D}^{-1}\tilde{S}$ ($\mathcal{D}$
is defined only up to a sign, but the sign should be chosen so that $\mathcal{D}^{-1}\tilde{S}$ has a 
strictly positive row). When the modular tensor category is unitary, $o$ in
(b) is also $[1]$. When the modular tensor category is the centre of a fusion category, then $T=\tilde{T}$.

The surprising lesson of this paper is that, although it is very difficult in general to obtain the modular tensor category from a fusion category or subfactor, it can be surprisingly easy to obtain the corresponding modular data.

There are several easy consequences of the definition of modular data. One is that it defines a 
(unitary)  SL$(2,\bbZ)$-representation $\rho$ through $s\mapsto S,t\mapsto T$. We will often call this
$\rho$ modular data. Also, $C=S^2$ 
is a permutation matrix $C_{x,y}=\delta_{y,x^\vee}$ commuting with $S$ and $T$, and satisfies
$C^2=I$ and
\begin{equation}\label{charge} \overline{S_{x,y}}= S_{x^\vee,y}\qquad\forall x,y\in\Phi\,.\end{equation}
Hence $1^\vee=1$ and $o^\vee=o$; moreover, $C=I$ iff $S$ is real.
The Perron--Frobenius dimensions are PFdim$(x)=\frac{S_{xo}}{S_{0o}}$.  When PFdim$(x)=1$,
then $x\,x^\vee=1$ in the fusion ring, and this has significant consequences for $S$ and $T$ (but
as we won't use these, we won't write them down).

The numbers $S_{xy}$ lie in some cyclotomic field $\bbQ[\xi_N]$. 
Then for each Galois automorphism $\sigma\in\mathrm{Gal}(\bbQ[\xi_N]/\bbQ)$, there is a permutation $x\mapsto x^\sigma$
of $\Phi$ and signs $\epsilon_\sigma:\Phi\rightarrow \{\pm1\}$ such that
\begin{equation}\label{galS}
  \sigma(S_{xy})=\epsilon_\sigma(x)S_{x^\sigma,y}=\epsilon_\sigma(y)S_{x,y^\sigma}\,.\end{equation}
  For example, complex conjugation corresponds to \eqref{charge}, i.e. to the permutation $x\mapsto x^\vee$
  and signs $\epsilon(x)=+1$.

 Verlinde's formula \eqref{Verlinde2} tells us the ratios $S_{xy}/S_{1y}$, being eigenvalues of the integer
 matrix $N_x=(N_{xa}^b)_{a,b\in\Phi}$, must be algebraic integers. Hence for any Galois automorphism
 $\sigma$, both $S_{1^\sigma,1}/S_{11}$ and 
 $$\frac{\epsilon_\sigma(1^{\sigma^{-1}})}{\epsilon_\sigma(1)}\sigma\left(\frac{S_{1^{\sigma^{-1}}1}}{S_{11}}
 \right)=\left(\frac{S_{1^\sigma 1}}{S_{11}}\right)^{-1}$$
are algebraic integers. But recall that dim$\,x=S_{x1}/S_{11}$ for any $x\in\Phi$. Thus we know that
dim$(1^\sigma)$ is an algebraic unit for all $\sigma$. This observation will help us identify later the Galois orbit of the unit $1$.

   Of course, Gal$(\bbQ[\xi_N]/\bbQ)\cong
\bbZ_N^\times$, where the correspondence $\sigma\leftrightarrow l$ is given by $\sigma(\xi_N)=\xi_N^l$.
We'll write $\sigma_l$ for the automorphism corresponding to $l\in\bbZ_N$. For example, complex 
conjugation is $\sigma_{-1}$.
We can say much more for the modular data associated to a modular tensor category.
 
We let
 $\Gamma(N)$ denote the principal congruence subgroup $$\{A\in \mathrm{SL}(2,\bbZ)\,|\,A\equiv
 I\ (\mathrm{mod}\ N)\}\,.$$ 
 We call $N$ the \textit{conductor}
of an SL$(2,\bbZ)$-representation $\rho$ if $N$ is the
smallest positive integer such that $\Gamma(N)$ is in the kernel of $\rho$ (and  $N=\infty$ if no $\Gamma(M)$
is in the kernel). We call $N$ the \textit{conductor} of a  field $K\supseteq \bbQ$ if $N$ is the smallest positive integer 
such that $K\subseteq \bbQ[\xi_N]$ (and $N=\infty$ if no cyclotomic field contains $K$).

 \begin{prop}[c.f. {\cite[Theorem 6.8]{MR2725181} \cite{math/9909080,MR1962117}}] \label{1.2} 
 Let $S,T,\rho$ be the modular data of a modular tensor category.
 Let $N$ be the order of $T$. Then $N<\infty$, $N$ equals the conductor of $\rho$, and
$N$ is a multiple of the  conductor of the field  $\bbQ[S]$ generated by all entries $S_{xy}$.
 Moreover, 
\begin{equation}
\label{galT}
T_{x^\sigma,x^\sigma}=T_{xx}^{l^2}\qquad\mathrm{for}\ \sigma=\sigma_l
\end{equation} 
for any  $\sigma\in\mathrm{Gal}(\bbQ[\xi_N]/\bbQ)$. If we define a signed
permutation matrix $G_\sigma$ by $(G_\sigma)_{x,y}=\epsilon_\sigma(x)\delta_{y,x^\sigma}$, then
 \begin{equation}\label{galoismatrix}
G_\sigma=CST^{1/l}ST^lST^{1/l}\qquad\mathrm{for}\ \sigma=\sigma_l\,,\end{equation}
where `$1/l$' denotes the inverse mod $N$ of $l$.
\end{prop}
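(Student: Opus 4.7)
The heart of the proposition is the Coste--Gannon formula \eqref{galoismatrix}; the remaining assertions follow readily from it. Finiteness of $N$ is immediate from Vafa's theorem that the twists $T_{xx}$ of a modular tensor category are roots of unity (Anderson--Moore--Vafa derive this from polynomial identities forced by the braided-rigid structure), so $T$ has finite order.

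For \eqref{galoismatrix}, a direct computation in $\mathrm{SL}(2,\bbZ)$ gives
\[ st^{1/l}\cdot st^{l}\cdot st^{1/l}\equiv D_l:=\begin{pmatrix} l & 0\\ 0 & 1/l\end{pmatrix}\pmod{\Gamma(N)}\,, \]
where $1/l$ is the inverse of $l$ modulo $N$. Granting the Ng--Schauenburg congruence subgroup property \cite[Theorem 6.8]{MR2725181} that $\Gamma(N)\subseteq\ker\rho$, this gives $ST^{1/l}ST^lST^{1/l}=\rho(D_l)$. To identify $\rho(D_l)=CG_\sigma$, I would apply $\sigma_l$ to the Verlinde formula \eqref{Verlinde2}, substituting \eqref{galS} and $\sigma_l(T_{ww})=T_{ww}^l$ (valid since $T^N=I$); the ensuing Gauss-sum manipulation forces $\rho(D_l)$, acting on $S$, to match $CG_\sigma$.

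With \eqref{galoismatrix} established, \eqref{galT} follows from the matrix identity $D_l t D_l^{-1}=t^{l^2}$ in $\mathrm{SL}(2,\bbZ_N)$: applying $\rho$ and using $\rho(D_l)=CG_\sigma$ together with the fact that $C$ commutes with $T$, one gets $G_\sigma TG_\sigma^{-1}=T^{l^2}$, which on diagonal entries reads $T_{x^\sigma,x^\sigma}=T_{xx}^{l^2}$. The conductor statements are then formal consequences: each $\sigma_l$ with $l\in\bbZ_N^\times$ is realised by \eqref{galoismatrix} as an automorphism of $\bbQ[S]$, so $\Gal(\bbQ[S]/\bbQ)$ is a quotient of $\bbZ_N^\times\iso\Gal(\bbQ[\xi_N]/\bbQ)$ and the conductor of $\bbQ[S]$ divides $N$; for $\rho$, $\Gamma(N)\subseteq\ker\rho$ combined with $\rho(t^M)=T^M\ne I$ for $M<N$ forces the conductor of $\rho$ to equal $N$. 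The substantive obstacle in the whole argument is the congruence subgroup property, whose proof is considerable but which I would simply invoke; everything else is routine Gauss-sum bookkeeping on top of Verlinde's formula.
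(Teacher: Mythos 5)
The paper does not actually prove this proposition: it is stated as a known result and attributed to Ng--Schauenburg \cite{MR2725181}, Coste--Gannon \cite{math/9909080}, and Bantay \cite{MR1962117}. So there is no ``paper's proof'' to compare with; what follows is an assessment of your sketch on its own terms.

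Your outline correctly identifies the ingredients of the literature argument: finiteness of $\operatorname{ord}(T)$ via Vafa/Anderson--Moore, the $\mathrm{SL}(2,\bbZ_N)$ matrix identity for $\begin{psmallmatrix} l & 0 \\ 0 & 1/l\end{psmallmatrix}$, the Ng--Schauenburg congruence-subgroup theorem to push $\rho$ down to $\mathrm{SL}(2,\bbZ_N)$, and the Galois sign-permutation identity \eqref{galS} (which the paper proves beforehand) to identify $\rho(D_l)$ with the Galois operator. But there is a factor-of-$C$ slip in the middle. You assert $\rho(D_l)=CG_\sigma$; taking $l=-1$ gives $\rho(D_{-1})=\rho(-I)=C$, while $CG_{\sigma_{-1}}=C\cdot C=I$ (since $\sigma_{-1}$ is complex conjugation, for which $x^{\sigma_{-1}}=x^\vee$ and $\epsilon=+1$, so $G_{\sigma_{-1}}=C$). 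So the correct identification is $\rho(D_l)=G_\sigma$, not $CG_\sigma$. The confusion stems from conventions for $s$: with $s=\begin{psmallmatrix}0&-1\\1&0\end{psmallmatrix}$ (for which $(st)^3=s^2=-I$, matching the modular-data relation $(ST)^3=S^2$; the paper's $s=\begin{psmallmatrix}0&1\\-1&0\end{psmallmatrix}$ is a typo, since it gives $(st)^3=I$), one finds $st^{1/l}st^{l}st^{1/l}\equiv -D_l$ rather than $D_l$, and the two sign errors cancel so that you still arrive at \eqref{galoismatrix}. Nonetheless as written the intermediate claims are each off by $C$.

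The other caveat is that the ``ensuing Gauss-sum manipulation'' is the entire substance of the Coste--Gannon argument. You are, in effect, deferring it to the references just as the paper does, which is fine --- but it should be stated as a citation rather than as a step you carry out. Your derivation of \eqref{galT} from $D_l t D_l^{-1}=t^{l^2}$ and your argument that the conductor of $\rho$ equals $N$ are both correct. The claim that the conductor of $\bbQ[S]$ divides $N$ would be cleaner phrased as: every $\sigma\in\Gal(\overline{\bbQ}/\bbQ[\xi_N])$ fixes $T$ and, via the congruence property, fixes $\rho(D_l)=G_\sigma=I$, hence fixes $S$ entrywise; thus $\bbQ[S]\subseteq\bbQ[\xi_N]$.
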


Now, $\Gamma(N)$ is normal in SL$(2,\bbZ)$, with quotient SL$(2,\bbZ)/\Gamma(N)\cong \mathrm{SL}(2,\bbZ_N)$. Thus this fact tells us that
$\rho$ factors through to a representation of the finite group SL$(2,\bbZ_N)$, which we will also denote
by $\rho$. It also tells us that
$G_\sigma=\rho(\gamma)$, where $\gamma$ is any element in SL$(2,\bbZ)$ congruent mod $N$
to $ \left(\begin{smallmatrix}\ell & 0 \\ 0 & 1/\ell\end{smallmatrix}\right)$, where $\sigma=\sigma_l$.

  We write $\chi_i^{(N)}$ for the SL$(2,\bbZ_N)$-character denoted X.i by GAP, and denote by
  $\rho_i^{(N)}$ the corresponding representation. This labelling is generally not unique, and depends on how the
  conjugacy classes are identified with the columns of GAP's character table, but for the SL$(2,\bbZ_N)$ we need, we
will make this explicit. For example, for SL$(2,\bbZ_2)$ we assign the generators $S,T$ to class 2a, while for SL$(2,\bbZ_3)$ we assign  $S,T$ to class 4a and 3b, respectively. An SL$(2,\bbZ_N)$-irrep $\rho$ obeys $\rho(-I)=\pm I$. If it is $+I$ we call $\rho$ \textit{even},
in which case it factors through to an irrep of PSL$(2,\bbZ_N)$; if  $\rho(-I)=-I$, we call $\rho$
\textit{odd}.

Given a $d$-dimensional SL$(2,\bbZ)$-representation $\rho$, write $\mathcal{T}(\rho)$ for the multiset 
$\{t_1,\ldots,t_d\}$ where $\{e^{2\pi i t_j}\}$ is the list of eigenvalues of $\rho(t)$. For us, $\rho(t)$ will
always have finite order, so the $t_j\in\bbQ/\bbZ$.
One easy consequence of an
SL$(2,\bbZ)$-representation $\rho$ having finite conductor $N$ is that the multiset of $T^{l^2}$-eigenvalues 
is independent of $l\in\bbZ_N^\times$: \begin{equation}\label{galoiscong} 
\{t_1,\ldots,t_d\}
=\{l^2t_1,\ldots,l^2t_d\}\qquad\mathrm{for\ all}\ l\in\bbZ_N^\times\,.\end{equation}
To see this, note that in SL$(2,\bbZ_N)$, $t^{l^2}$ equals $t$ conjugated by $  \left(\begin{smallmatrix}\ell & 0 \\ 0 & 1/\ell\end{smallmatrix}\right)$, and so $T^{l^2}$ must have the same multiset of eigenvalues
as $T$.

Let $\prod_p p^{\nu_p}$ be the prime decomposition of $N$. 
By the Chinese Remainder Theorem, the group SL$(2,\bbZ_{N})$ is isomorphic to the direct product of
the SL$(2,\bbZ_{p^{\nu_p}})$. This implies that the irreps of SL$(2,\bbZ_N)$ are the tensor products 
$\otimes_p\rho_p$, where each $\rho_p$ is an irrep of  SL$(2,\bbZ_{p^{\nu_p}})$.

For example, the even 1-dimensional SL$(2,\bbZ)$-representations are $\rho_1^{(1)},\rho^{(3)}_2,\rho^{(3)}_3$,
while the odd ones are $\rho_2^{(2)},\rho_2^{(2)}\otimes\rho^{(3)}_2,\rho_2^{(2)}\otimes\rho^{(3)}_3$. These have
$$\cT(\rho)=\{0\},\left\{\frac{2}{3}\right\},\left\{\frac{1}{3}\right\},\left\{\frac{1}{2}\right\},\left\{\frac{1}{6}\right\},\left\{\frac{5}{6}\right\},$$ respectively. 

Finally we have Cauchy's theorem for modular tensor categories, recently proved in \cite{1310.7050}:
\begin{prop}
\label{fact:cauchy}
The primes dividing the conductor of a modular tensor category are the same primes that divide the norm of its global dimension.
\end{prop}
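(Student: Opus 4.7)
Plan: The statement is an equality of two sets of primes, those dividing $N$ and those dividing $\mathcal{D}^2$, and I would prove the two inclusions separately. Throughout I would use Proposition \ref{1.2} and the relation $S_{11}=1/\mathcal{D}$, so that $\mathcal{D}^2=1/S_{11}^2$ with all $S$-entries in $\bbQ[\xi_N]$.

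For the direction ``$p\mid\mathcal{D}^2\Rightarrow p\mid N$,'' I would work $p$-adically. If $p\nmid N$, then $p$ is unramified in $\bbQ[\xi_N]$, so it suffices to show $v_\mathfrak{p}(S_{11})=0$ for each prime $\mathfrak{p}$ above $p$. Each quantum dimension $d_x=S_{1,x}/S_{11}$ is an algebraic integer (being an eigenvalue of the nonnegative integer fusion matrix $N_x$), giving $v_\mathfrak{p}(S_{1,x})\ge v_\mathfrak{p}(S_{11})$; meanwhile the observation already made in the paper that $d_{1^\sigma}$ is an algebraic unit for each Galois conjugate $\sigma$ forces equality on the orbit of $1$. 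Combining this with the unitarity identity $\sum_y|S_{1,y}|^2=1$ pins down $v_\mathfrak{p}(S_{11})=0$, hence $v_\mathfrak{p}(\mathcal{D}^2)=0$, contradicting $p\mid\mathcal{D}^2$.

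For the converse ``$p\mid N\Rightarrow p\mid\mathcal{D}^2$,'' the congruence structure is essential. By Proposition \ref{1.2}, $\rho$ factors through $\mathrm{SL}(2,\bbZ_N)\cong\prod_{q\mid N}\mathrm{SL}(2,\bbZ_{q^{\nu_q}})$, so $\rho\cong\bigotimes_q\rho_q$, and the assumption $p\mid N$ makes the $p$-local factor $\rho_p$ nontrivial. Introducing the Gauss sums $\mathcal{D}^{\pm}=\sum_x d_x^2\,\theta_x^{\pm 1}$ (with $\theta_x=T_{xx}$), the modular relation $(ST)^3=S^2$ implies $\mathcal{D}^+\mathcal{D}^-=\mathcal{D}^2$. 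Nontriviality of $\rho_p$ forces some $\theta_x$ to have order divisible by $p$, so $\mathcal{D}^\pm\in\bbQ[\xi_N]\setminus\bbQ[\xi_{N/p}]$; a valuation argument at a prime above $p$, tracking the ramification index $p^{\nu_p-1}(p-1)$ of $\bbQ[\xi_N]/\bbQ$, then shows $v_\mathfrak{p}(\mathcal{D}^+\mathcal{D}^-)>0$, whence $p\mid\mathcal{D}^2$.

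The main obstacle is this converse direction. Translating the nontriviality of $\rho_p$ into an honest divisibility statement for $\mathcal{D}^2$ requires delicate control of valuations of Gauss sums and their Galois conjugates; extracting this cleanly needs the higher Frobenius--Schur indicator machinery of Ng--Schauenburg, which precisely bridges the arithmetic of the $T$-eigenvalues with the factorisation of the global dimension. In a fully self-contained proof I would need either to redevelop that machinery or to invoke \cite{1310.7050} directly.
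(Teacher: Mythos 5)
The paper does not actually give a proof of this statement: it is quoted directly from Bruillard--Ng--Rowell--Wang \cite{1310.7050}, and the authors even remark that the result ``is not actually essential to what follows.'' So there is no ``paper's own proof'' to compare against; the only question is whether your sketch is sound as a proof of the cited theorem.

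Your proposal correctly identifies the two inclusions and the right arena (valuations at primes $\mathfrak{p}$ above $p$ in $\bbQ[\xi_N]$), and you are honest that the direction $p\mid N\Rightarrow p\mid\mathrm{Norm}(\cD^2)$ ultimately needs the Ng--Schauenburg machinery (or just citing \cite{1310.7050}). But the direction you treat as self-contained, $p\mid\mathrm{Norm}(\cD^2)\Rightarrow p\mid N$, also has a genuine gap. From ``each $d_x$ is an algebraic integer'' you get $v_\mathfrak{p}(S_{1,x})\ge v_\mathfrak{p}(S_{11})$, and from ``$d_{1^\sigma}$ is a unit'' you get that $v_\mathfrak{p}(S_{11})$ is constant over the primes $\mathfrak{p}\mid p$. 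Feeding these into $\sum_y S_{1,y}^2=1$ only yields $0=v_\mathfrak{p}\bigl(\sum_y S_{1,y}^2\bigr)\ge 2\,v_\mathfrak{p}(S_{11})$, i.e.\ $v_\mathfrak{p}(S_{11})\le 0$, which is exactly the trivial statement that $\cD^2=1/S_{11}^2$ is an algebraic integer. The minimum valuation is attained at several indices (the whole orbit $\{1^\sigma\}$), so $\mathfrak{p}$-adic cancellation can occur and the ultrametric inequality does not become an equality; positivity of the summands is an Archimedean fact and gives no $\mathfrak{p}$-adic control. What you actually need is the reverse inequality $v_\mathfrak{p}(S_{11})\ge 0$, equivalently that $\cD^2$ is a $\mathfrak{p}$-unit when $p\nmid N$. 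In \cite{1310.7050} this is not a soft consequence of unitarity; it comes from a divisibility of the form $\cD^2\mid N^{r+1}$ in $\bbZ[\xi_N]$ (with $r$ the rank), itself derived from the congruence property of $\rho$ and the Galois action on the $S$-matrix -- that is, it is roughly as hard as the direction you flagged. So as written, neither implication in your sketch closes without importing substantial input from \cite{1310.7050} or \cite{MR2725181}; the honest move, which the paper itself makes, is simply to cite the result.
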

(This result is not actually essential to what follows. Our first derivation of the modular data did not use this, but it considerably simplifies the analysis.)

\section{Galois actions}
\begin{lem}
\label{lem:bounding-orbit-size-from-N}
Suppose a simple object $x$ has $T_{xx}$ a root of unity with order $N_x = \prod_p p^{\mu_p}$.
Then the number of distinct eigenvalues of $T$ in the full Galois
orbit
of $x$ is
\begin{align*}
k(N_x) & = \max\{1, 2^{\mu_2  - 3}\} \prod_{2 < p | N_x} p^{\mu_p - 1} (p-1)/2 \\ 
       & = N_x 2^{-\min\{\mu_2, 3\}} \prod_{2 < p | N_x} \frac{1}{2} \left(1-\frac{1}{p}\right).
\end{align*}
The size of the full Galois orbit is thus a multiple of $k(N_x)$.
\end{lem}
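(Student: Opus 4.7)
The starting point is the Galois-$T$ relation \eqref{galT}: for $\sigma = \sigma_l$ we have $T_{x^\sigma,x^\sigma} = T_{xx}^{l^2}$. Since $T_{xx}$ is a primitive $N_x$-th root of unity, the values $T_{xx}^{l^2}$ depend only on $l^2 \bmod N_x$, and distinct residues give distinct eigenvalues. Thus counting distinct $T$-eigenvalues over the Galois orbit of $x$ reduces to counting the image of the squaring map on $\bbZ_{N_x}^\times$, once I know that every class in $\bbZ_{N_x}^\times$ is represented by some $l \in \bbZ_N^\times$. This is clear from the surjectivity of the reduction $\bbZ_N^\times \twoheadrightarrow \bbZ_{N_x}^\times$, which holds because $N_x \mid N$ (recall $N$ is the order of $T$, hence divisible by the order of each diagonal entry).

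The plan for the count itself is to apply the Chinese Remainder Theorem to write $\bbZ_{N_x}^\times \cong \prod_p \bbZ_{p^{\mu_p}}^\times$; then the number of squares factors across primes. For odd $p \mid N_x$, the group $\bbZ_{p^{\mu_p}}^\times$ is cyclic of order $p^{\mu_p-1}(p-1)$, so the subgroup of squares has index $2$ and cardinality $p^{\mu_p-1}(p-1)/2$, contributing the claimed factor. For $p = 2$ one does a small case analysis using the standard structure theorem: $\bbZ_{2^{\mu_2}}^\times$ is trivial, $\bbZ_2$, or $\bbZ_2 \times \bbZ_{2^{\mu_2 - 2}}$ according as $\mu_2 = 1$, $\mu_2 = 2$, or $\mu_2 \geq 3$. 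In each case one checks that the number of squares is $1$ for $\mu_2 \leq 3$ and $2^{\mu_2 - 3}$ for $\mu_2 \geq 3$, yielding the $\max\{1,2^{\mu_2 - 3}\}$ prefactor. Multiplying gives the first displayed form of $k(N_x)$, and a line of algebra converts it to the second.

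Finally, for the divisibility statement: the assignment $\sigma_l \mapsto x^{\sigma_l}$ factors through the coset space $\bbZ_N^\times / H_x$ (where $H_x$ is the stabilizer), and within the orbit $O$ the relation ``same $T$-eigenvalue'' is precisely ``$l_1^2 \equiv l_2^2 \pmod{N_x}$''. This relation coincides with the partition of $O$ by cosets of the image of the kernel $K \subseteq \bbZ_N^\times$ of squaring-mod-$N_x$, and since these cosets all have the same cardinality, $|O|$ is $k(N_x)$ times this common fiber size. Hence $k(N_x) \mid |O|$.

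The content is essentially elementary group theory once one unpacks \eqref{galT}; the only part requiring any care is the $2$-primary case analysis, and then the (mild) bookkeeping to express the two-stage quotient $\bbZ_N^\times \to \bbZ_{N_x}^\times \to \bbZ_{N_x}^\times/(\mathrm{squares})$ as an equipartition of the orbit. I do not expect any serious obstacle; the main virtue of the lemma is that it packages this count into a clean divisibility constraint that can be fed to the later Galois-orbit arguments.
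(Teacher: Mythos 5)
Your proof is correct and follows essentially the same route as the paper's: reduce via \eqref{galT} (using surjectivity of $\bbZ_N^\times \onto \bbZ_{N_x}^\times$) to counting the image of the squaring map on $\bbZ_{N_x}^\times$, then compute via CRT and the structure of $\bbZ_{p^\mu}^\times$, with the $p=2$ case analysis handled as in the paper. The only difference is that you spell out the divisibility claim (equipartition of the orbit by cosets of the kernel of $l \mapsto l^2 \bmod N_x$) which the paper leaves implicit; this is a welcome addition but not a different argument.
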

\begin{proof}
Suppose the order of $T$ is $N$, some multiple of $N_x$. For any $\ell \in \bbZ_{N_x}^\times$, there is an $\ell' \in \bbZ_N^\times$ with $\ell' \equiv \ell \pmod {N_x}$. Now
$$T_{\sigma_{\ell'}x \sigma_{\ell'}x} = T_{xx}^{{\ell'}^2} = T_{xx}^{\ell^2}.$$
Thus there are as many distinct eigenvalues of $T$ in the orbit of $x$ as there are images of the squaring map in $\bbZ_{N_x}^\times$.

Now, $\bbZ_{mn}^\times \cong \bbZ_m^\times\times \bbZ_n^{\times}$ if gcd$(m,n)=1$. Moreover, $\bbZ^\times_
{p^n} \cong
\bbZ_{p^{n-1}(p-1)}$ for prime $p\ne 2$ and any $n$, and
$\bbZ^\times_
{2^n} \cong \bbZ_2\times \bbZ_ {2^ {n-2}}$ for $n \geq 2$. Those
well-known facts give us the structure of any $\bbZ_N^\times$, and hence the cardinality of the image of the squaring map, as given above.
\end{proof}

\begin{cor}
\label{lem:conductor-squared}
Let $S,T$ be the modular data of some modular tensor category.
Suppose a prime $p$ divides the conductor of $\bbQ[d_x]$ for some $x\in\Phi$, and $\|\Phi\| < p (p-1)/2$. Then the order $N$ of $T$ is $p
M$, where $M$ is coprime to $p$.
\end{cor}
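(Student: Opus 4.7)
The plan is a short proof by contradiction: assume $p^2 \mid N$ and produce a Galois orbit too large to fit in $\Phi$.

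First I would dispose of the easy direction, $p \mid N$. Since $d_x = S_{x1}/S_{11}$ lies in $\bbQ[S]$, we have a chain $\bbQ[d_x] \subseteq \bbQ[S] \subseteq \bbQ[\xi_N]$, where the last inclusion is Proposition \ref{1.2}. Hence the conductor of $\bbQ[d_x]$ divides $N$, and the hypothesis that $p$ divides the former forces $p \mid N$. The entire content of the corollary is therefore the claim $p^2 \nmid N$.

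Now suppose for contradiction $p^2 \mid N$. Because $T$ is diagonal and unitary, $N$ is just the least common multiple of the orders $N_y = \mathrm{ord}(T_{yy})$ as $y$ ranges over $\Phi$. So some $y \in \Phi$ has $p^2 \mid N_y$, i.e. $\mu_p := v_p(N_y) \geq 2$. Note also that $p=2$ is automatically excluded by the hypothesis, since $p(p-1)/2 = 1$ and $\|\Phi\| \geq 1$; so $p$ is odd.

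Applying Lemma \ref{lem:bounding-orbit-size-from-N} to this $y$, and using $\mu_p \geq 2$, the product formula for $k(N_y)$ contains the factor $p^{\mu_p - 1}(p-1)/2 \geq p(p-1)/2$, while every other factor is at least $1$. Hence the full Galois orbit of $y$ has size at least $k(N_y) \geq p(p-1)/2$. But this orbit is a subset of $\Phi$ (the Galois action \eqref{galS} permutes $\Phi$), so $\|\Phi\| \geq p(p-1)/2$, contradicting our hypothesis. The one place requiring care is extracting the lower bound $p(p-1)/2$ from the product formula in Lemma \ref{lem:bounding-orbit-size-from-N}; everything else is bookkeeping.
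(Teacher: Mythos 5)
Your proof is correct and follows essentially the same route as the paper: reduce to showing $p^2 \nmid N$, pick $y$ with $p^2 \mid N_y$, and invoke Lemma~\ref{lem:bounding-orbit-size-from-N} to get a Galois orbit of size at least $p(p-1)/2$ inside $\Phi$, contradicting the cardinality hypothesis. Your extra remark that $p=2$ is vacuously excluded is a correct (and tidy) observation that the paper leaves implicit; otherwise the arguments coincide.
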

\begin{proof}
Since $d_x=S_{x1}/S_{11}$, $\bbQ[d_x]\subseteq \bbQ[S]$. 
Certainly $p$ divides $N$, by Lemma 1.2. If $p^2$ divides $N$, then there would be some $y\in\Phi$ 
with root of unity $T_{yy}$ having order $N_y$ a multiple of $p^2$. Then Lemma \ref{lem:bounding-orbit-size-from-N} would imply
$\|\Phi\|\ge k(N_y)\ge k(p^2)=p(p-1)/2$, a contradiction.
\end{proof}

\section{Basic lemmas}
\label{sec:basic-lemmas}

Let $S,T$ be the modular data coming from a modular tensor  category, and let $\rho$ be the 
corresponding SL$(2,\bbZ)$ representation. Write $N$ for the order of $T$ and $\Phi$ for the set of simple
objects. Recall the multiset $\cT(\rho)=\{t_x\}_{x\in\Phi}$ defined last section, so $T_{xx} = \exp(2\pi i t_x)$. As
always, $1\in\Phi$
denotes the unit and $x^\vee$ the dual.

In the following, we will assume for convenience that $T_{11}=1$, 
and that $S_{x1}>0$. Both are true for instance for the double of any subfactor (as categorical dimensions coincide with Frobenius-Perron dimensions, which are positive). All of
our results can be easily generalised when those assumptions are dropped.

Because $\rho$ is a representation of the finite group SL$(2,\bbZ_N)$, it decomposes into a direct sum
$\rho\cong\oplus_{i\in\cI}\,\rho_i$ of irreps.
Our strategy will be to control the possibilities for this decomposition.
Write $S_i=\rho_i(s)$ and $T_i=\rho_i(t)$. Like $\rho$, each $\rho_i$ is a matrix representation; bases
$\Phi_i$ are chosen so that each $T_i$ is diagonal.
Then there will exist an invertible matrix $Q$, with entries $Q_{iz,x}$ for $i\in\cI,z\in\Phi_i,x\in\Phi$,
 such that $S=Q^{-1}(\oplus_iS_i)Q$ and $T=Q^{-1}(\oplus_iT_i)Q$. Write $N=\prod_pp^{\nu_p}$
 as before; then $\rho_i\cong\otimes_p\rho_{i,p}$ where $\rho_{i,p}$ is some irrep of SL$(2,\bbZ_{p^\nu_p})$.

Call $i\in\cI$ \textit{even} resp. \textit{odd} if the subrepresentation $\rho_i$ is even resp. odd.
Call a simple object $x$ \textit{unique} if $t_{x}$ occurs with multiplicity one in $\cT(\rho)$. 

Let's collect some simple observations. (See \cite[\S 3]{1507.05139} for some related statements.)
\begin{lem}\label{2.1}\label{lem:basic}\mbox{}
\begin{enumerate}[(a)]
\item 
\label{unique-a} If $Q_{iz,x}\ne 0$ or $(Q^{-1})_{x,iz}\ne 0$, then $T_{i;\,zz}=T_{xx}$.

\item
\label{unique-b}
Suppose $S_{xy}\ne 0$. Then  both $t_{x},t_{y}\in\cT(\rho_i)$ for some index $i\in\cI$.

\item
\label{unique-c}
For each $x\in\Phi$, write $t_x=\sum_p \frac{m_p}{p^{\nu_p}}$ for $m_p\in\bbZ$; then
there is a (not necessarily unique) index $i_x\in\cI$ such that both $0,m_p/p^{\nu_p}\in\cT(\rho_{i_x,p})$ 
for all $p$.

\item
\label{unique-d}
Suppose $x\in\Phi$ is unique. Then $i_x$ defined in \eqref{unique-c} is unique. Let $\hat{x}$ denote the unique
index in  $\Phi_{i_x}$ with $T_{i_x;\,\hat{x}\hat{x}}=T_{xx}$. Then for all $i\in\cI,z\in\Phi_i,y\in\Phi$,
$Q_{iz,x}=\cQ_x\delta_{ii_x}\delta_{z\hat{x}}$ and $Q_{i_x\hat{x},y}=\cQ_x \delta_{xy}$,
for some nonzero $\cQ_x$.  

\item
\label{unique-e}
Suppose $x,y\in\Phi$ are both unique and that $i_x=i_y$.  Then $S_{i_x;\,\hat{x}\hat{y}}\cQ_y^2=S_{i_x;\,\hat{y}\hat
{x}}\cQ_x^2$
and $S_{xy}=S_{i_x;\,\hat{x}\hat{y}}\cQ_y/\cQ_x$.

\item
\label{unique-f}
Suppose $x\in\Phi$ is unique and $0\in\cT(\rho_{i_x})$ has multiplicity one.  Write $z_x$ for the 
 unique index in $\Phi_{i_x}$ with $T_{i_x;z_xz_x}=1$. Then for any  $y\in\Phi$ with
$T_{yy}=1$, $S_{xy}=S_{i_x;\,\hat{x}z_x}Q_{z_x,y}/\cQ_x=(Q^{-1})_{y,i_xz_x}\cQ_xS_{i_x;\,z_x\hat{x}}$.

\item 
\label{unique-g}
For each $r\in\cT(\rho)$, let $$n_+(r)=\sum_{\mathrm{even}\,\, i\in\cI}\mathrm{mult}_{\cT(\rho_i)}(r)$$ and 
$$n_-(r)=\sum_{\mathrm{odd}\,\, i\in\cI}\mathrm{mult}_{\cT(\rho_i)}(r).$$ Then $n_+(r)+n_-(r)=\mathrm{mult}_{\cT(\rho)}(r)$ and $n_+(r)-n_-(r)$ is the number of $x=x^\vee\in\Phi$ with $t_x=r$. In particular,
$n_+(r)\ge n_-(r)$.
\end{enumerate}
\end{lem}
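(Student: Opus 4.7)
The plan is to deduce all seven parts from the conjugation identities $(\bigoplus_i T_i) Q = QT$ and $S = Q^{-1}(\bigoplus_i S_i) Q$, treating the problem as bookkeeping about which $\rho_i$-block each datum of $\Phi$ sits in. For (a), I compare $(iz,x)$-entries of $(\bigoplus_i T_i)Q = QT$ to obtain $T_{i;zz} Q_{iz,x} = Q_{iz,x} T_{xx}$, so cancellation gives the claim, and the transposed equation $TQ^{-1}=Q^{-1}(\bigoplus_i T_i)$ handles $Q^{-1}$. Expanding $S_{xy} = \sum_{i,z,w}(Q^{-1})_{x,iz} S_{i;zw} Q_{iw,y}$ using block-diagonality of $\bigoplus_i S_i$ immediately proves (b): a nonzero contribution forces a common $i$, and (a) places both $t_x,t_y$ in $\cT(\rho_i)$. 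For (c), applying (b) to the pair $(1,x)$ (using $S_{1,x}>0$) yields an $i$ with both $0=t_1$ and $t_x$ in $\cT(\rho_i)$; writing $\rho_i=\bigotimes_p\rho_{i,p}$ identifies the $t$-eigenvalues of $\rho_i$ as sums $\sum_p t_{p,k_p}$ with $t_{p,k_p}\in\cT(\rho_{i,p})$, and the coprimality of the denominators $p^{\nu_p}$ lets the Chinese remainder theorem in $\bbQ/\bbZ$ split $0$ and $t_x$ into their individual $p$-parts. This CRT splitting is the one step that needs genuine thought rather than tautology.

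For (d), if two distinct $i\ne i'$ both satisfied the (c)-condition for a unique $x$, then $t_x$ would occur in both $\cT(\rho_i)$ and $\cT(\rho_{i'})$, doubling its multiplicity in $\cT(\rho)$; so $i_x$ is unique and $\hat x$ is the unique $z\in\Phi_{i_x}$ with $t_{i_x;z}=t_x$. Part (a) then forces the $x$-column of $Q$ to be supported at $(i_x,\hat x)$ and the $(i_x\hat x)$-row of $Q$ to be supported at $x$; the two statements pick out the same entry $Q_{i_x\hat x,x}=:\cQ_x$, and invertibility of $Q$ gives $\cQ_x\ne0$. Along the way, the $(x,x)$-entry of $Q^{-1}Q=I$ identifies $(Q^{-1})_{x,i_x\hat x}=1/\cQ_x$, which I use below. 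For (e), expanding $S_{xy}$ and collapsing both the $Q^{-1}$ and $Q$ factors via (d) leaves the single term $\cQ_x^{-1}S_{i_x;\hat x\hat y}\cQ_y$ when $i_x=i_y$; comparing with the analogous expression for $S_{yx}$ via symmetry of $S$ produces the quadratic identity. Part (f) is the same computation except the inner sum is pinned not by uniqueness of $y$ but by the multiplicity-one assumption on $0\in\cT(\rho_{i_x})$, which forces $w=z_x$ (and, on the dual side, $z=z_x$).

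For (g), the additive identity is immediate from $\cT(\rho)=\bigsqcup_i\cT(\rho_i)$. The signed version is a trace computation: since $s^2=-I$ in $\mathrm{SL}(2,\bbZ)$, the operator $C=S^2=\rho(s^2)$ acts as $+I$ on each even $\rho_i$-block and $-I$ on each odd block, so restricting to the $T$-eigenspace $V_r$ for eigenvalue $e^{2\pi i r}$ and tracing gives $\mathrm{tr}(C|_{V_r})=n_+(r)-n_-(r)$. But $V_r$ has the standard basis $\{e_x:t_x=r\}$ and $C$ permutes these by $x\mapsto x^\vee$, so the same trace counts self-dual $x$ with $t_x=r$; non-negativity of a count then forces $n_+(r)\ge n_-(r)$. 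The overall plan has essentially no serious obstacle — the one subtle point is the CRT step in (c), since everything downstream reduces to careful indexing once the tensor-factor picture of the $\rho_{i,p}$ is in hand.
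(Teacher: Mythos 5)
Your proof is correct and follows essentially the same route as the paper: compare entries of the conjugation identities $QT=(\oplus_i T_i)Q$ and $S=Q^{-1}(\oplus_i S_i)Q$, use $S_{1x}>0$ plus the CRT/coprime-denominator observation for (c), and read (g) off the trace of $S^2=Q^{-1}(\oplus_i S_i^2)Q$ restricted to a fixed $T$-eigenspace. You simply supply a bit more detail than the paper does at the steps (c)–(f) that the authors compress into one-line remarks.
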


\begin{proof}
Because both $T$ and $\oplus_iT_i$ are diagonal, the $(iz,x)$-entries of $QT=(\oplus_iT_i)Q$ and
$TQ^{-1}=Q^{-1}(\oplus_iT_i)$ give \eqref{unique-a}. To see \eqref{unique-b}, 
suppose $S_{xy}\ne 0$. Since $S_{xy}=\sum_{i,a,b}(Q^{-1})_{x,ia}S_{i;\,ab}Q_{ib,y}$, this means
there
is some indices $i\in\cI$ and  $a,b\in\Phi_i$ such that both
$(Q^{-1})_{x,ia},Q_{ib,y}\ne 0$. From \eqref{unique-a}, this gives \eqref{unique-b}. Part \eqref{unique-c}
now follows from \eqref{unique-b} and $S_{x1}\ne 0$:  $\cT(\rho_{i;p})\subset p^{-\nu_p}\bbZ/\bbZ$
and any $r\in\cT(\rho)$ will  have a unique (mod 1) expression as a sum $\sum_pm_p/p^{\nu_p}$. 
Part \eqref{unique-d} is immediate from \eqref{unique-a}. Parts \eqref{unique-e} and \eqref{unique-f} now follow  from $S_{yx}=S_{xy}=(Q^{-1}\left(\oplus_iS_i\right)Q)_{xy}$.

To see part \eqref{unique-g}, restrict charge-conjugation $S^2=Q^{-1}\left(\oplus_iS^2_i\right)Q$ to the $x\in\Phi$ with $t_x=r$. The trace 
of that permutation submatrix will equal the number of self-dual $x$ with $t_x=r$; since $S_i^2=\pm I$ depending
on whether  $\rho_i$ is even or odd, that trace will also equal $n_+(r)-n_-(r)$.
\end{proof}


\section{Dimensions}

In \cite{1404.3955}, the combinatorial data $A: K_0(Z(\cC)) \to K_0(\cC)$ of the restriction functor  $Z(\cC)\rightarrow
\cC$
was obtained,
when $\cC$ is both the principal even and dual even fusion categories of the extended Haagerup.
These are respectively
$$A_{EH1}=\left(\begin{array}{cccccccccccccccccccccc} 1&  1&  1&  1&  1&  1&  0& 0&  0&  0&  0&  0&  0&  0& 0&  0&  0& 
0&  0&  0&    0&  0\\ 0&  1&  1&  2&  1&  0&  1& 1&  1&  1&  1&  1&  1&  1&  0&  0&  0&  0&  1&  1&   1&  1\\ 0&  2&  1&  1&  1&  3&  2&  2&  2&  2&  2&  2&  2&  2&  1&  1&  1&  1&  1&  1&   1&  1\\ 0&  4&  1&  2&  4&  2&  3&   3&  3&  3&  4&  4&  4&  4&  3&  3&  3&  3&  1&  1&   1&  1\\ 0&  5&  1&  4&  2&  3&  3&   3&  3&  3&  5&  5&  5&  5&   4&  4&  4&  4&  1&  1&   1&  1\\ 0&  3&  1&  1&  2&  2&  1&   1&  1&  1&  3&  3&  3&  3&   2&  2&  2&  2&  1&  1&    1&  1\end{array}\right)$$
$$A_{EH2}=\left(\begin{array}{cccccccccccccccccccccc} 1& 1& 1& 2& 1& 0& 0& 0& 0& 0& 0& 0& 0& 0&  0& 0& 0& 0& 0& 0& 0&
0\\ 0& 1& 1& 1& 1& 1& 1& 1& 1& 1& 1& 1& 1& 1&  0& 0& 0& 0& 1& 1& 1& 1\\ 0& 2& 1& 2& 1& 2& 2& 2& 2& 2& 2& 2& 2& 2& 1& 1& 1& 1& 1& 1& 1& 1\\ 0& 4& 1& 1& 4& 3& 3& 3& 3& 3& 4& 4& 4& 4& 3& 3& 3& 3& 1& 1& 1& 1\\ 0& 4& 1& 3& 2& 2& 2& 2& 2& 2& 4& 4& 4& 4& 3& 3& 3& 3& 1& 1& 1& 1\\ 0& 4& 1& 3& 2& 2& 2& 2& 2& 2& 4& 4& 4& 4& 3& 3& 3& 3& 1& 1& 1& 1\\ 0& 1& 0& 1& 0& 1& 1& 1& 1& 1& 1& 1& 1& 1&  1& 1& 1& 1& 0& 0& 0& 0\\ 0& 1& 0& 1& 0& 1& 1& 1& 1& 1& 1& 1& 1& 1& 1& 1& 1& 1& 0& 0& 0& 0\end{array}\right).$$
The matrices corresponding to the induction functors are the transposes.
The 22 columns correspond to the 22 simple objects $\Phi$ in the centre $Z(\cC)$. The columns have been 
ordered so that the first column corresponds to the tensor unit. For reasons which will be clear shortly, 
we will name these 22 simple objects, in order, $\omega_0,\omega_1,\omega_2, \alpha_1,\alpha_2,\alpha_3,\beta_1,\ldots,\beta_4,\gamma_1,\ldots,\gamma_4,\delta_1,\ldots,\delta_4,\epsilon_1,\ldots,\epsilon_4$. Here $\omega_0$ is the tensor identity.

Each restriction matrix tells
us two things. First, the image of the tensor identity in $\cC$ (namely the first row in $A_\cC$) will be an eigenvector
of both $S$ and $T$, with
eigenvalue 1 and $T_{11}$ respectively \cite[Theorem 1]{MR2837122}. As in any centre, we can take $T_{11}=1$ here; this tells us for instance that 
\begin{equation}T_{\omega_i\omega_i}=1=T_{\alpha_j\alpha_j}\end{equation}
for all $0\le i\le 2$ and $1\le j\le 3$.
Second and far more important, we obtain the dimensions dim$\,x$ for the simple $x$
in $Z(\cC)$: these dimensions are the components of the Perron--Frobenius eigenvector of the
matrix $A_\cC^tA_\cC$, normalised so that dim$\,1=1$. Its  eigenvalue will be the global dimension
$\cD=\sqrt{\sum_{x\in\Phi}\mathrm{dim}(x)^2}$. Of course, $S_{1x}=S_{x1}=\frac{\dim(x)}{\cD}$.

Numerically, these dimensions are approximately 1,  177.701,  49.396,  
   114.049 (7 times),  176.701 (4 times),  128.304 (4 times),  
and     48.396 (4 times), respectively, and the global dimension $\cD$ is approximately 570.246. 
When they are computed exactly, they are all found to lie in  the degree-3 extension $\bbQ_{dim}$ of $\bbQ$ in $\bbQ[\xi_{13}]$. 
More precisely, $\bbQ_{dim}$ has a basis $1,\zeta=2\cos(2\pi/13)+2\cos(10\pi/13)$ and 
$\zeta'=2\cos(4\pi/13)+2\cos(6\pi/13)$ over $\bbQ$; then
\begin{align}
\label{eq:dimensions}
S_{11} & = \frac{7-5\zeta'}{65}, &
S_{1,\omega_1} & = \frac{12+ 5\zeta+5\zeta'}{65}, &
S_{1,\omega_2} & = \frac{7- 5\zeta}{65}, \\
 & &  S_{1,\alpha_i} & = S_{1,\beta_j} = \frac{1}{5}, \notag \\
S_{1,\gamma_j} &=
\frac{1+\zeta+2\zeta'}{13}, & S_{1,\delta_j} & =\frac{1+ 2\zeta+ \zeta'}{13}, & S_{1,\epsilon_j}&=\frac{-\zeta+\zeta'}{13}. \notag
\end{align}

\section{Galois action and the conductor}
\label{sec:galois-action}

\begin{thm}\label{compatible}
Any modular data compatible with the restriction matrices given in the last section has
\begin{enumerate}
\item conductor $N = 5 \times 13$,
\item first 3 rows and columns of $S$ determined by
$$ S _{\omega_i, x} =  S _{x,\omega_i} =\sigma_{16}^i S_{1, x}, $$
where $\sigma_l(\xi_{65}) = \xi_{65}^l$,
and
\item 
\begin{enumerate}[(i)]
\item the objects $\{\omega_i\}$ forming a single Galois orbit,
\item the objects $\{\alpha_1,\ldots,\alpha_3,\beta_1,\ldots,\beta_4\}$ forming a union of Galois orbits, and
\item the objects $\{\gamma_1,\ldots,\gamma_4, \delta_1,\ldots,\delta_4, \epsilon_1,\ldots,\epsilon_4\}$ either forming
a single Galois orbit of size 12, or forming two Galois orbits of size 6, each of which containing two each of the $\gamma_i$, $\delta_i$, and $\epsilon_i$.
\end{enumerate}
\end{enumerate}
\end{thm}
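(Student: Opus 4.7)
The three claims correspond to three semi-independent arguments.

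For part (1), I would combine Proposition \ref{1.2}, Cauchy's theorem (Proposition \ref{fact:cauchy}), and Corollary \ref{lem:conductor-squared}. The dimensions in \eqref{eq:dimensions} lie in the cubic subfield $\bbQ_{dim}\subset\bbQ[\xi_{13}]$, so $13$ divides the conductor of $\bbQ[S]$ and hence $13\mid N$. Computing the norm of $\cD^2=\sum_x\dim(x)^2$ and applying Cauchy restricts the prime divisors of $N$ to $\{5,13\}$. Corollary \ref{lem:conductor-squared} at $p=13$ gives $13^2\nmid N$ since $22<13\cdot 12/2$. To exclude $25\mid N$, I would use that the 5-part of $\Gal(\bbQ[\xi_N]/\bbQ)$ fixes $\bbQ[\xi_{13}]\supset\bbQ_{dim}$, hence preserves each strict dimension class of objects; but the largest such class $\{\alpha_i,\beta_j\}$ has only $7$ elements, while Lemma \ref{lem:bounding-orbit-size-from-N} applied to the 5-part forces any $x$ with $25\mid N_x$ to have 5-part Galois orbit containing $\geq k(25)=10$ distinct $T$-eigenvalues, a contradiction.

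For part (2), I would verify directly that the three Galois conjugates of $S_{11}=(7-5\zeta')/65$ under $\Gal(\bbQ_{dim}/\bbQ)\cong\bbZ_3$ are exactly $S_{1,\omega_0},S_{1,\omega_1},S_{1,\omega_2}$, using the action $\zeta\mapsto\zeta'\mapsto -1-\zeta-\zeta'$ induced by $\sigma_3$. Since $16\equiv 1\pmod 5$ and $16\equiv 3\pmod{13}$, $\sigma_{16}$ restricts to the generator of this cubic quotient. The identity $\dim(1^\sigma)=\epsilon_\sigma(1)\sigma(S_{11})/S_{11}$, combined with positivity of dimensions, then forces $\epsilon_{\sigma_{16}^i}(1)=+1$ and $1^{\sigma_{16}^i}=\omega_i$; the formula $S_{\omega_i,x}=\sigma_{16}^i S_{1,x}$ follows from \eqref{galS}.

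For part (3)(i), the orbit of $\omega_0=1$ is then $\{\omega_0,\omega_1,\omega_2\}$ by (2), and no other object has matching dimension. For (ii), the rationality $S_{1,\alpha_i}=S_{1,\beta_j}=1/5$ is Galois-fixed, so by \eqref{galS} any Galois image of $\alpha_i$ or $\beta_j$ also has $S_{1,\cdot}=1/5$, staying within $\{\alpha,\beta\}$. For (iii), a direct computation shows that $\sigma_{16}$ cyclically permutes the three dimension classes $\{\gamma\}\to\{\delta\}\to\{\epsilon\}\to\{\gamma\}$ (after absorbing signs into $\epsilon_{\sigma_{16}}$), so any Galois orbit meets all three classes and has size divisible by $3$; since the orbit lies in the 12-element set and divides $|\bbZ_{65}^\times|=48$, only $\{3,6,12\}$ remain.

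The main obstacle is excluding orbit size $3$. Such an orbit would have stabilizer equal to the unique index-$3$ subgroup of $\bbZ_{65}^\times\cong\bbZ_5^\times\times\bbZ_{13}^\times$, namely $\bbZ_5^\times\times\{1,5,8,12\}$. Testing $\ell^2\equiv 1\pmod{N_y}$ for $\ell$ corresponding to $(2,1)$ yields $N_y\mid 13$, and for $\ell$ corresponding to $(1,5)$ yields $N_y\mid 5$, forcing $N_y=1$ and $T_{yy}=1$. A final contradiction is then obtained by combining this with the information from the first rows of both restriction matrices $A_{EH1}$ and $A_{EH2}$ (whose joint support in the $T=1$ eigenspace is $\{\omega_0,\ldots,\alpha_3\}$), together with a bound on the multiplicity of $T=1$ from the $\mathrm{SL}(2,\bbZ_{65})$-irrep decomposition of $\rho$, showing that no $y\in\{\gamma,\delta,\epsilon\}$ can satisfy $T_{yy}=1$.
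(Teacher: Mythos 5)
Your plan for parts (1), (2), (3)(i), (3)(ii) is sound, although it diverges from the paper in a couple of places worth noting. For part (1), the paper first establishes (iii) and then excludes $5^2\mid N$ by observing that a Galois orbit of size $\geq 10$ cannot fit in a $12$-element set. You instead work with the $5$-part of $\bbZ_N^\times$ directly: since it acts trivially on $\bbQ[\xi_{13}]\supset\bbQ_{dim}$ it preserves each exact-dimension class, and the biggest class with $t_x\neq 0$ has only $7$ elements, which is incompatible with the $\geq k(25)=10$ distinct $T$-eigenvalues forced on the $5$-part orbit. This is valid (the proof of Lemma \ref{lem:bounding-orbit-size-from-N} adapts straightforwardly to the $5$-part since it surjects onto $\bbZ^\times_{25}$) and is somewhat cleaner than the paper's route, since it decouples (1) from (iii); you should, however, flag that you are applying the proof technique of Lemma \ref{lem:bounding-orbit-size-from-N} rather than the lemma's literal statement. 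Part (2) is the same Galois computation as in the paper, written out slightly more explicitly.

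The issue is in part (3)(iii). You correctly spot that the elementary constraints only bound the orbit sizes to $\{3,6,12\}$ (the multiple-of-$3$ structure, Lagrange in $|\bbZ^\times_{65}|=48$, and the bound $\leq 12$), so the case of a size-$3$ orbit must be excluded separately. Your argument that a size-$3$ orbit forces $T_{yy}=1$ (stabilizer is the unique index-$3$ subgroup $\bbZ^\times_5\times\{1,5,8,12\}$, and evaluating $\ell^2\pmod{N_y}$ at two well-chosen elements forces $N_y=1$) is correct and a nice observation. But your proposed "final contradiction" is not a proof. The first rows of $A_{EH1}$, $A_{EH2}$ give you two particular $T$-eigenvectors with eigenvalue $1$; they say nothing about the \emph{support} of the $T=1$ eigenspace, so you cannot conclude that no $\gamma/\delta/\epsilon$ has $T=1$ from that data. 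Worse, invoking ``a bound on the multiplicity of $T=1$ from the $\mathrm{SL}(2,\bbZ_{65})$-irrep decomposition of $\rho$'' is circular: that decomposition is precisely what Sections \ref{sec:group-of-12} and \ref{sec:group-of-4} go on to establish, and it uses the output of the present theorem as input. Incidentally, the paper's own proof also passes silently over the size-$3$ case (its stated constraints only rule out size $3$ when the order of $T$ is a multiple of $5$ or $13$); this is harmless for the paper because the later arguments use only the weaker facts from Theorem \ref{compatible}, and Proposition \ref{prop:group-of-12} ultimately shows $\|\Phi_{13}\|=12$ by an independent character argument, which a posteriori gives all of $\{\gamma,\delta,\epsilon\}$ a $T$-value of order $13$. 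If you want to close the gap here rather than defer it, you would need a genuine argument bounding the number of $T=1$ objects without assuming the irrep decomposition — for instance, an argument in the style of Proposition \ref{prop:group-of-12}'s $\|\Phi_{13}\|=6$ exclusion — rather than an appeal to data not yet available.
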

\begin{proof}
Define $\bar{\sigma}_l \in \Gal(\bbQ[\xi_{13}] / \bbQ)$ by $\bar{\sigma}_l(\xi_{13}) = \xi_{13}^l$. We see that $\Gal(\bbQ_{dim}/\bbQ)=\{\bar{\sigma}_1,\bar{\sigma}_3,\bar{\sigma}_{9}\}$.

Then $\cD=295+ 125\zeta+ 175\zeta'$,  which has norm $\cD\bar{\sigma}_3(\cD)\bar{\sigma}_3^2(\cD)=21125=5^3 13^2$. By Cauchy's theorem for modular tensor categories (Fact \ref{fact:cauchy}), the order $N$ of $T$ will be $5^a13^b$ for some $a,b\ge 1$.
By Corollary \ref{lem:conductor-squared}, $b=1$.

Whatever the value of the conductor, we have a surjective map $\pi:\bbZ^\times_N\rightarrow
  \{\bar{\sigma}_1,\bar{\sigma}_3,\bar{\sigma}_9\}$, corresponding to the restriction of $\sigma\in
  \Gal(\bbQ[\xi_N]/\bbQ)$ to $\bbQ_{dim}$, which we'll write $\pi\sigma_l=\bar{\sigma}^{\pi l}_3$.

From \eqref{galS} and $S_{1x}>0$, we obtain the sign $\varepsilon_l(x)=\mathrm{sign}(\sigma_l S_{1,x})$ for any 
$l\in\bbZ^\times_N$ and any $x\in\Phi$.
Since $\bar{\sigma}_3(S_{1,1})=S_{1,\omega_1}$ and $\bar{\sigma}_3^2(S_{1,1})=S_{1,\omega_2}$, $\bar{\sigma}_3(S_{1,\alpha_1})=S_{1,\alpha_1}$, $\bar{\sigma}_3(S_{1,\gamma_1})=-S_{1,\delta_1}$, $\bar{\sigma}_3(S_{1,\delta_1})=S_{1,\epsilon_1}$,  we obtain
$$\varepsilon_l(\omega_i)=\varepsilon_l(\alpha_i)=\varepsilon_l(\beta_j)=\varepsilon_{\bar{\sigma}_3}(\delta_j)=+1\,,\
\varepsilon_{\bar{\sigma}_3}(\gamma_j)=\varepsilon_{\bar{\sigma}_3}(\epsilon_j)=-1\,,\ \forall i,l,j\,.$$
Moreover, $\bbZ^\times_N$ sends $\{\alpha_1,\ldots,\alpha_3,\beta_1,\ldots,\beta_4\}$ to itself, and
$$\omega_i^{\sigma_l}=\omega_{i+\pi(l)}\ \ \forall i,l\,.$$ 
When $\pi(l)=1$, $\sigma_l$ sends $\{\gamma_1,\ldots,\gamma_4\}\rightarrow\{\delta_1,\ldots,\delta_4\}
\rightarrow\{\epsilon_1,\ldots,\epsilon_4\}\rightarrow \{\gamma_1,\ldots,\gamma_4\}$. 

Using this Galois action, we obtain $S_{\omega_i,x}=\bar{\sigma}_3^iS_{1,x}$ for $i=1,2$. Thus we
know the first 3 rows and columns of $S$ (as well as the first 6 diagonal elements of $T$, of course).

Because we know $N=5^a13$, there will exist a unique order-3 element $l$ in $\bbZ^\times_N$ 
with $\pi(l)=1$, namely $l\equiv 3$ (mod 13) and $l\equiv 1$ (mod $5^a$). We will also use 
$\bar{\sigma}_3 $ to denote this element of $\Gal(\bbQ[\xi_N]/\bbQ)$.
As the elements of the sets $\{\gamma_1,\ldots,\gamma_4\}$, $\{\delta_1,\ldots,\delta_4\}$, and
$\{\epsilon_1,\ldots,\epsilon_4\}$ are at this point indistinguishable, we
may choose $\bar{\sigma}_3(\gamma_i) = \delta_i$, $\bar{\sigma}_3(\delta_i) = \epsilon_i$, and $\bar{\sigma}_3(\epsilon_i) = \gamma_i$.

Thus we see that the objects $\{\gamma_1,\ldots,\gamma_4, \delta_1,\ldots,\delta_4, \epsilon_1,\ldots,\epsilon_4\}$ form between one and four Galois orbits, with these orbits having size a multiple of 3.
But \eqref{galT} implies that the length of this Galois orbit must be even if $T_{\gamma_i,\gamma_i}$
has order a multiple of 5, and the length  must be a multiple of 6 if the order is a multiple of 13.
This gives us (iii).

Suppose now that $5^2$ divides $N$. Then there is a simple object $x$ with $T_{xx}$ a root of unity with order $N_x$ divisible by 25. By Lemma \ref{lem:bounding-orbit-size-from-N}, the Galois orbit containing $x$ has size a multiple of 10. From the above, this is impossible. Thus we have proved that $N = 5 \times 13$.

Finally we see that $\bar{\sigma}_3$ is $\sigma_{16} \in \Gal(\bbQ[\xi_{65}] / \bbQ)$.
\end{proof}
\section{The group of 12}
\label{sec:group-of-12}

The character table of SL$(2,\bbZ_{13})$ (computed from GAP) is given in Figure 1. 
The number $\overline{A}=(1-\sqrt{13})/2$, so labelled because it is a Galois associate of $A=(1+\sqrt{13})/2$.
Class 2a is the central element, $s$ and $t$ correspond to class 4a and 13a respectively, while 12a generates the Galois group $\bbZ_{13}^\times$.

\begin{figure}[ht]
\label{fig:sl(2,13)}
\[
\begin{array}{l|rrrrrrrrrrrrrrrrr}
 & 1a & 26a & 26b & 2a & 13a & 13b & 14a & 7a & 7b & 7c & 14b & 14c & 12a & 3a & 4a & 6a & 12b \\ 
\hline
\chi_{1} & 1 & 1 & 1 & 1 & 1 & 1 & 1 & 1 & 1 & 1 & 1 & 1 & 1 & 1 & 1 & 1 & 1 \\ 
\chi_{2} & 6 & A & \overline{A} & -6 & -A & -\overline{A} & 1 & -1 & -1 & -1 & 1 & 1 & . & . & . & . & . \\ 
\chi_{3} & 6 & \overline{A} & A & -6 & -\overline{A} & -A & 1 & -1 & -1 & -1 & 1 & 1 & . & . & . & . & . \\ 
\chi_{4} & 7 & A & \overline{A} & 7 & A & \overline{A} & . & . & . & . & . & . & -1 & 1 & -1 & 1 & -1 \\ 
\chi_{5} & 7 & \overline{A} & A & 7 & \overline{A} & A & . & . & . & . & . & . & -1 & 1 & -1 & 1 & -1 \\ 
\chi_{6} & 12 & -1 & -1 & 12 & -1 & -1 & B & D & C & B & D & C & . & . & . & . & . \\ 
\chi_{7} & 12 & -1 & -1 & 12 & -1 & -1 & C & B & D & C & B & D & . & . & . & . & . \\ 
\chi_{8} & 12 & -1 & -1 & 12 & -1 & -1 & D & C & B & D & C & B & . & . & . & . & . \\ 
\chi_{9} & 12 & 1 & 1 & -12 & -1 & -1 & -B & D & C & B & -D & -C & . & . & . & . & . \\ 
\chi_{10} & 12 & 1 & 1 & -12 & -1 & -1 & -C & B & D & C & -B & -D & . & . & . & . & . \\ 
\chi_{11} & 12 & 1 & 1 & -12 & -1 & -1 & -D & C & B & D & -C & -B & . & . & . & . & . \\ 
\chi_{12} & 13 & . & . & 13 & . & . & -1 & -1 & -1 & -1 & -1 & -1 & 1 & 1 & 1 & 1 & 1 \\ 
\chi_{13} & 14 & 1 & 1 & 14 & 1 & 1 & . & . & . & . & . & . & 1 & -1 & -2 & -1 & 1 \\ 
\chi_{14} & 14 & 1 & 1 & 14 & 1 & 1 & . & . & . & . & . & . & -1 & -1 & 2 & -1 & -1 \\ 
\chi_{15} & 14 & -1 & -1 & -14 & 1 & 1 & . & . & . & . & . & . & . & 2 & . & -2 & . \\ 
\chi_{16} & 14 & -1 & -1 & -14 & 1 & 1 & . & . & . & . & . & . & E & -1 & . & 1 & -E \\ 
\chi_{17} & 14 & -1 & -1 & -14 & 1 & 1 & . & . & . & . & . & . & -E & -1 & . & 1 & E 
\end{array}\]
\begin{align*}
A & = -\zeta_{13}^{2}-\zeta_{13}^{5}-\zeta_{13}^{6}-\zeta_{13}^{7}-\zeta_{13}^{8}-\zeta_{13}^{11} \displaybreak[1] \\
B & = -\zeta_{7}-\zeta_{7}^{6} \displaybreak[1] \\
C & = -\zeta_{7}^{3}-\zeta_{7}^{4} \displaybreak[1] \\
D & = -\zeta_{7}^{2}-\zeta_{7}^{5} \displaybreak[1] \\
E & = -\zeta_{12}^{7}+\zeta_{12}^{11} \displaybreak[1]
\end{align*}%
\newsavebox{\smlmat}
\savebox{\smlmat}{$\left(\begin{smallmatrix} \ell&0\\ 0&\ell^{-1} \end{smallmatrix}\right)$}%
\caption{The character table of \(SL(2,\bbZ_{13})\). 
}
\end{figure}

\begin{prop}
\label{prop:group-of-12} 
Let $\rho$ be the SL$(2,\bbZ)$-representation $\rho$ coming from the modular data of the centre of the extended
Haagerup. Then $\rho\cong \rho^{(13)}_{14}\oplus\rho_{(5)}$, where $\rho_{(5)}$ is some representation whose kernel
contains $\Gamma(5)$.
\end{prop}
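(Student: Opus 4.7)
My strategy is to determine the 13-content of $\rho$, namely the multiplicities $M_b$ in the decomposition $\rho|_{\mathrm{SL}(2,\bbZ_{13})} = \bigoplus_b M_b\, \rho^{(13)}_b$ obtained by restricting $\rho$ via the projection $\mathrm{SL}(2,\bbZ_{65}) = \mathrm{SL}(2,\bbZ_5) \times \mathrm{SL}(2,\bbZ_{13}) \twoheadrightarrow \mathrm{SL}(2,\bbZ_{13})$, and to show this is $\rho^{(13)}_{14} \oplus 8\,\rho^{(13)}_1$. This will exhibit $\rho^{(13)}_{14}$ (paired with the trivial $\rho^{(5)}_1$) as a direct summand of $\rho$, while the complement has trivial 13-content and so factors through $\mathrm{SL}(2,\bbZ_5)$, i.e.\ has $\Gamma(5)$ in its kernel.

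First, I would pin down the multiset of 13-parts of the $\{t_x\}$. The six objects $\omega_i, \alpha_j$ have $T_{xx}=1$, so their 13-parts vanish. By Theorem~\ref{compatible}(3)(iii) and \eqref{galT} (through which $\bar\sigma_3$ acts on 13-parts by multiplication by $9$), the twelve objects $\gamma_i, \delta_i, \epsilon_i$ have nonzero 13-parts, distributed either as case~(a) (six squares or nonsquares of $\bbZ_{13}^\times$ each with multiplicity 2) or case~(b) (all twelve primitive 13th-roots each with multiplicity 1). For the four $\beta_j$, I would argue that $\bar\sigma_3$ preserves $\{\beta\}$ setwise (since it preserves the Galois-stable image of the tensor unit, which is precisely $\{\omega\}\cup\{\alpha\}$), so the $\beta_j$ split into $\bar\sigma_3$-orbits of size 1 (13-part $0$) or 3 (13-parts in some $\{a, 3a, 9a\}$ with $a \neq 0$). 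A size-3 orbit would produce an asymmetric multiset in which one $\langle 9\rangle$-coset appears with greater multiplicity than the other, and I would check that no combination of the $T$-spectra of the $\mathrm{SL}(2,\bbZ_{13})$-irreps in Figure~1 realizes such an imbalance. Hence all $\beta_j$ have 13-part $0$, and the 13-part multiset is 10 zeros plus 12 nonzero values as above.

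Next I would enumerate 13-irrep decompositions matching this multiset. Case~(a) can only be realized by a single 14-dimensional irrep $\chi^{(13)}_4 \otimes \rho^{(5)}_a$ or $\chi^{(13)}_5 \otimes \rho^{(5)}_a$ with $\dim \rho^{(5)}_a = 2$; two copies of $\chi^{(13)}_4 \otimes \rho^{(5)}_1$ would instead produce two Galois orbits of size 6 and hence case~(b). But every 2-dimensional irrep of $\mathrm{SL}(2,\bbZ_5)$ is odd (they cannot factor through $A_5 = \mathrm{PSL}(2,\bbZ_5)$), so such a tensor is odd; then each of the 12 multiplicity-1 nonzero eigenvalues of $\rho$ would have $n_-(r) = 1$ and $n_+(r) = 0$, violating $n_+ \geq n_-$ in Lemma~\ref{lem:basic}(g). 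Hence case~(a) is ruled out. In case~(b) the same parity argument forces the nontrivial 13-irrep contribution to be even, and with the 22-dimension budget the only possibilities are $\rho^{(13)}_b \oplus 8\,\rho^{(13)}_1$ for $b \in \{13, 14\}$.

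Finally, $\chi^{(13)}_{13}$ and $\chi^{(13)}_{14}$ share the same $T$-spectrum and differ only on class $4a$ (character values $-2$ and $+2$), so the distinction requires more than $T$-character data. The Galois signs $\varepsilon_{\bar\sigma_3}(\gamma_j) = \varepsilon_{\bar\sigma_3}(\epsilon_j) = -1$, $\varepsilon_{\bar\sigma_3}(\delta_j) = +1$ from the proof of Theorem~\ref{compatible} are, by Proposition~\ref{1.2}, the signed-permutation entries of $G_{\bar\sigma_3} = \rho(\gamma)$ for any $\gamma \equiv \mathrm{diag}(3, 1/3) \pmod{13}$; computing the action of this element on each of $\chi^{(13)}_{13}$, $\chi^{(13)}_{14}$ and matching the resulting $\pm 1$-pattern against the signs above singles out $\chi^{(13)}_{14}$. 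I expect this final step---an explicit calculation inside each 14-dimensional irrep rather than at the level of characters---to be the main obstacle, since the preceding steps are essentially dimension and character bookkeeping.
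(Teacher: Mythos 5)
Your overall strategy---reducing to the $13$-content of $\rho$ and then enumerating $SL(2,\bbZ_{13})$-decompositions---is sound, but the execution has several genuine gaps. First, the claim that all twelve $\gamma_i,\delta_i,\epsilon_i$ have nonzero $13$-part does not follow from Theorem~\ref{compatible}(3)(iii) plus \eqref{galT}: in the two-orbit case, one orbit of size $6$ could consist entirely of objects whose $T_{xx}$ has order dividing $5$ (note $k(5)=2$ divides $6$, so Lemma~\ref{lem:bounding-orbit-size-from-N} does not forbid this). Second, your treatment of the $\beta_j$ (``I would check that no combination realizes such an imbalance'') is a plan, not an argument; the paper instead invokes Lemma~\ref{lem:bounding-orbit-size-from-N} directly: any $\beta_j$ with $T_{\beta_j\beta_j}$ of order divisible by $13$ would lie in a Galois orbit of size at least $6$, but such an orbit is confined to $\{\beta_1,\ldots,\beta_4\}$ and so has size at most $4$. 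Third, your enumeration of possible $13$-restrictions is incomplete: you claim only $\rho^{(13)}_b\oplus 8\,\rho^{(13)}_1$ with $b\in\{13,14\}$ remain, but $\rho^{(13)}_{12}\oplus 9\,\rho^{(13)}_1$, $\rho^{(13)}_4\oplus\rho^{(13)}_5\oplus 8\,\rho^{(13)}_1$, and $2\,\rho^{(13)}_4\oplus 8\,\rho^{(13)}_1$ are all even and produce the same $T$-spectrum $\{0^{(10)},\tfrac{1}{13},\ldots,\tfrac{12}{13}\}$; your parity argument does not touch them since none involves an odd tensor factor.

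The decisive idea you are missing, and which the paper uses to close all of these gaps simultaneously, is to examine the \emph{trace} of the Galois matrix $G_\sigma$ for a $\sigma$ whose image in $\bbZ_{13}^\times$ generates the whole group, e.g.\ $\sigma_{11}$, so that $\rho\left(\begin{smallmatrix}11&0\\0&11^{-1}\end{smallmatrix}\right)$ lies in class $12a$ of $SL(2,\bbZ_{13})$. Since $\bar\sigma_3=\sigma_{11}^8$ acts without fixed points on $\Phi_{13}$ and on $\{\omega_i\}$, and $\rho_{(5)}$ is trivial on this element, one gets $\dim\rho_{13}-\|\Phi_{13}\|-\chi_{13}(12a)\in\{3,5,6,\ldots\}$ (the paper's Equation~\eqref{charconstr13}); this single constraint kills every remaining candidate. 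In particular, your claim that $\chi^{(13)}_{13}$ and $\chi^{(13)}_{14}$ ``differ only on class $4a$'' is false---they also differ on classes $12a$ and $12b$ (values $1$ vs.\ $-1$), and it is precisely the value at $12a$ that distinguishes them via a pure character computation, not the explicit-matrix computation you anticipate as ``the main obstacle''. Your proposed alternative of matching the sign pattern of $G_{\bar\sigma_3}$ would evaluate the Galois element at class $3a$, where $\chi^{(13)}_{13}$ and $\chi^{(13)}_{14}$ agree, so you would indeed be forced beyond character values; choosing a full-order element avoids this entirely.
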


\begin{proof} 
We learned in Theorem \ref{compatible} that the full Galois group leaves invariant  the sets $\{\omega_i\}$, $\{\alpha_i\}\cup\{\beta_i\}$,
 and $\{\gamma_i\}\cup\{\delta_i\}\cup\{\epsilon_i\}$ of simples. We also know that the order  of $T$ is
 $N=5 \times 13$.

Consider $\Phi_{13}$, the set of those simples $x$ whose $T_{xx}$
has order a multiple of 13. 
Because of Equation \eqref{galT},
the set $\Phi_{13}$  is a union of Galois orbits. By Lemma \ref{lem:bounding-orbit-size-from-N}, each such orbit has size divisible by $\frac{13-1}{2}=6$. The set $\Phi_{13}$ cannot contain an $\alpha_i$ or $\omega_i$ (because their $T$ is 1),
nor $\beta_i$ (because those either have $T=1$ or form Galois orbits of cardinality $\le 4$). So we have 
$\Phi_{13}\subseteq\{\gamma_i\}\cup\{\delta_i\}\cup\{\epsilon_i\}$.

From the character table we find that
the only nontrivial irreps $\rho'$  of SL$(2,\bbZ_{13})$ for which  $0\in\mathcal{T}(\rho')$ are
$\rho^{(13)}_4$,  $\rho^{(13)}_5$, and the irreps $\rho_{12}^{(13)}$ to $\rho^{(13)}_{17}$.

First, suppose for contradiction that $\rho$ contains a subrepresentation of the form $\rho_{13}\otimes
\rho_5$, where $\rho_{13}$ resp. $\rho_5$ are irreps with conductor exactly 13 resp. 5. If $\rho_5$ has dimension
at least 3, then $(\rho_{13}\otimes\rho_{5})(t)$ will have at least $6\times 3=18$ diagonal
entries with order a multiple of 13, contradicting $\|\Phi_{13}\|\le 12$. Hence $\rho_5$ has dimension 2,
so by the same argument all $\Phi_{13}$ is accounted for by $\rho_{13}\otimes\rho_5$, and any
other subrepresentation of $\rho$ must have conductor coprime to 13 (and hence dividing 5). But
dim$\,\rho_5=2$ implies $0\not\in\mathcal{T}(\rho_5)$ thanks to Equation \eqref{galT}, and this contradicts
Lemma \ref{2.1}(c).

Hence $\rho\cong\rho_{13}\oplus\rho_{(5)}$, where every subrepresentation of $\rho_{13}$ has conductor exactly 13, and every subrepresentation of $\rho_{(5)}$ has conductor coprime to 13 (hence dividing 5).
Moreover, we know by Lemma \ref{2.1}(c) that $0\in\cT(\rho_{13})$.
We will constrain $\rho_{13}$ by considering the Galois matrix $G_{11}=\rho\left(\begin{smallmatrix}
11&0\\ 0&11^{-1}\end{smallmatrix}\right)$, which we know from Proposition \ref{1.2} is a signed permutation
matrix. This permutation $x\mapsto x^{\sigma_{11}}$ permutes $\Phi_{13}$ without fixed points, since $\bar{\sigma}_3=\sigma_{11}^8$
acts without fixed points. Likewise, $\sigma_{11}$ permutes $\omega_0,\omega_1,\omega_2$ without
fixed points, since $\bar{\sigma}_3$ does. Therefore $\sigma_{11}$ leaves invariant the sets $\{\alpha_i\}
\cup\{\beta_j\}$, as well as that part of $\{\gamma_i\}\cup\{\delta_i\}\cup\{\epsilon_i\}$ not in $\Phi_{13}$.
Of course, $\rho_{(5)}\left(\begin{smallmatrix}
11&0\\ 0&11^{-1}\end{smallmatrix}\right)=I$ since $\rho_{(5)}$ has conductor dividing 5. Together, this
means  $\mathrm{dim}\,\rho_{(5)}+\chi_{13}(12a)=\mathrm{Tr}\,G_{11}$ is the trace of a signed permutation
matrix with $22-\|\Phi_{13}\|-3$ rows, i.e.
\begin{equation}\mathrm{dim}\,\rho_{13}-\|\Phi_{13}\|-\chi_{13}(12a)\in\{3,5,6,7,\ldots\}\,.\label{charconstr13}
\end{equation}
Here, `$12a$' refers to the conjugacy class of $\left(\begin{smallmatrix}
11&0\\ 0&11^{-1}\end{smallmatrix}\right)$; the value 4 is excluded because the trace of a signed
permutation matrix of size $n\times n$ cannot equal $n-1$ (nor be larger than $n$).

Suppose next for contradiction that $\|\Phi_{13}\|<12$. Then $\|\Phi_{13}\|=6$,  and $\rho_{13}$ is $\rho_4^{(13)}$ or $\rho_5^{(13)}$.  In this case, dim$\,\rho_{13}-\|\Phi_{13}\|-\chi_{13}(12a)=2$, a forbidden value.
Similarly, if  $\|\Phi_{13}\|=12$ but $\rho_{13}$ is not irreducible, then $\rho_{13}\cong\rho'\oplus\rho''$, 
where $\rho'\in\{\rho_4^{(13)},\rho_5^{(13)}\}$ and $\rho''\in\{\rho^{(13)}_2,\rho^{(13)}_3,\rho^{(13)}_4,\rho^{(13)}_5\}$. But then dim$\,\rho_{13}-\|\Phi_{13}\|-\chi_{13}(12a)$ equals 2 (if $\rho''\in\{\rho^{(13)}_2,\rho^{(13)}_3\}$) or 4 (if $\rho''\in\{\rho^{(13)}_4,\rho^{(13)}_5\}$), both of which are forbidden.

Thus $\rho_{13}$ is irreducible and of dimension $\ge 13$ (since $0\in\cT(\rho_{13})$), 
so $\rho_{13}$ is one of $\rho^{(13)}_{12},\ldots,\rho^{(13)}_{17}$. 
We can dismiss $\rho_{i}\cong \rho^{(13)}_{ 15},\rho^{(13)}_{16},\rho^{(13)}_{17}$ out of hand, because these are odd, contradicting Lemma \ref{2.1}(g). Moreover, $\rho_{13}\cong\rho^{(13)}_{12}$ resp.
$\rho^{(13)}_{13}$ have dim$\,\rho_{13}-\|\Phi_{13}\|-\chi_{13}(12a)$ equal to 0 resp. 1, so also must
be dismissed. The only remaining possibility is $\rho_{13}\cong\rho^{(13)}_{14}$.
\end{proof}

We give an explicit matrix realisation of  $\rho_{14}^{(13)}$  in
Appendix \ref{appendix:matrices}.

\section{The group of 4}
\label{sec:group-of-4}

So far, we have accounted for the 12 simples $\{\gamma_i\}\cup\{\delta_i\}\cup\{\epsilon_i\}$, as well
as  4 simples $x$ with $T_{xx}=1$: namely  2 appearing in the $\rho_{14}^{(13)}$ (recall
Proposition
\ref{prop:group-of-12}), and 2 trivial SL$(2,\bbZ)$-irreps associated with the two modular invariants 
\begin{equation}\label{twoModInv}
(1\,1\,1\,1\,1\,1\,0\,\ldots\,0)^\transpose\,,\ \ (1\,1\,1\,2\,1\,0\,\ldots\,0)^\transpose\,,
\end{equation}
coming from the induction functors. That leaves  unaccounted  6 simples (amongst $\{\omega_i\}
\cup\{\alpha_i\}\cup
\{\beta_i\}=:\mathcal{R}$). We also know $\rho\cong\rho_{14}^{(13)}\oplus
\rho_{(5)}$, where  $\rho_{(5)}$ has conductor exactly 5. Our goal in this section is to identify $\rho_{(5)}$.

In Figure 2 we give the character table of SL$(2,\bbZ_5)$ (computed in GAP). Class $5a$ contains $t$,
class $4a$ contains both $s$ and $\left(\begin{smallmatrix}2&0\\ 0&2^{-1}\end{smallmatrix}\right)$, while class $2a$ contains $-I$. The number $\overline{A}=-2\cos(4\pi/5)$ is the unique nontrivial
Galois associate of ${A}=-2\cos(2\pi/5)$.

\begin{figure}[ht]
\label{fig:sl(2,5)}
\[
\begin{array}{l|rrrrrrrrr}
 & 1a & 10a & 10b & 2a & 5a & 5b & 3a & 6a & 4a \\ 
\hline
\chi_{1} & 1 & 1 & 1 & 1 & 1 & 1 & 1 & 1 & 1 \\ 
\chi_{2} & 2 & A & \overline{A} & -2 & -A & -\overline{A} & -1 & 1 & . \\ 
\chi_{3} & 2 & \overline{A} & A & -2 & -\overline{A} & -A & -1 & 1 & . \\ 
\chi_{4} & 3 & \overline{A} & A & 3 & \overline{A} & A & . & . & -1 \\ 
\chi_{5} & 3 & A & \overline{A} & 3 & A & \overline{A} & . & . & -1 \\ 
\chi_{6} & 4 & -1 & -1 & 4 & -1 & -1 & 1 & 1 & . \\ 
\chi_{7} & 4 & 1 & 1 & -4 & -1 & -1 & 1 & -1 & . \\ 
\chi_{8} & 5 & . & . & 5 & . & . & -1 & -1 & 1 \\ 
\chi_{9} & 6 & -1 & -1 & -6 & 1 & 1 & . & . & . \\ 
\end{array}\]
\begin{align*}
A & = -\zeta_{5}-\zeta_{5}^{4} \displaybreak[1] \\
\end{align*}
\caption{The character table of \(SL(2,\bbZ_{5})\).}
\end{figure}

\begin{prop}
\label{prop:group-of-4}
Let $\rho_{(5)}$ be as in Proposition \ref{prop:group-of-12}. Then $\rho_{(5)}\cong \rho_8^{(5)}\oplus
1\oplus 1\oplus 1$.  \end{prop}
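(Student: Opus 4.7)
My plan is to decompose $\rho_{(5)}$ into irreducible SL$(2,\bbZ_5)$-subrepresentations by combining a dimension count, the forced multiplicity of $0$ in $\cT(\rho_{(5)})$, a parity check, a Galois-matrix trace constraint, and one application of Lemma \ref{2.1}(b).

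First I would do the bookkeeping. By Proposition \ref{prop:group-of-12}, $\dim\rho_{(5)}=22-14=8$, and the conductor of $\rho_{(5)}$ is exactly $5$ (the 13-part of $N$ is used up by $\rho_{14}^{(13)}$). Since $\chi_{14}^{(13)}(13a)=1$ and $\bbZ_{13}^\times$ acts transitively on the primitive $13$-th roots of unity, $\cT(\rho_{14}^{(13)})=\{0,0,1/13,\ldots,12/13\}$, contributing $2$ zeros. Subtracting from the six simples $\omega_i,\alpha_j$ with $T=1$ leaves multiplicity $4$ for $0$ in $\cT(\rho_{(5)})$. Parity is forced to be even: at each $t=k/5$ of multiplicity $1$, Lemma \ref{2.1}(g) rules out any odd subrepresentation covering $t$, and the remaining multiplicity-$2$ cases can be eliminated by self-duality considerations.

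Next, enumerate. The even SL$(2,\bbZ_5)$-irreps have dimensions $1,3,3,4,5$ (characters $\chi_1,\chi_4,\chi_5,\chi_6,\chi_8$) with $\cT$-multiplicity of $0$ equal to $1,1,1,0,1$. The even 8-dimensional representations with four zeros in $\cT$ and conductor exactly $5$ are
\begin{align*}
\text{(D)}\;\chi_6+4\chi_1,\quad
\text{(E)}\;\chi_8+3\chi_1,\quad
\text{(F)}\;\chi_4+\chi_5+2\chi_1,\quad
\text{(G)}\;2\chi_4+2\chi_1,\quad
\text{(H)}\;2\chi_5+2\chi_1.
\end{align*}
I would then apply the Galois matrix $G_{27}$ with $\ell=27$ (chosen so $27\equiv 2\pmod 5$ and $27\equiv 1\pmod{13}$). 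Since $\sigma_{27}$ acts trivially on $\bbQ[\xi_{13}]$, each $\omega_i,\gamma_j,\delta_j,\epsilon_j$ is fixed with sign $+1$, contributing $15$ to the trace; and $T_{\beta_j^{\sigma_{27}}}=T_{\beta_j}^4$ ensures that no $\beta_j$ is fixed. Hence $\mathrm{Tr}\,G_{27}=15+m$ with $m\in[0,3]$ counting fixed $\alpha_j$'s. On the other hand, by Proposition \ref{1.2}, $G_{27}=\rho(\mathrm{diag}(27,27^{-1}))$ projects to the identity in SL$(2,\bbZ_{13})$ and to class $4a$ in SL$(2,\bbZ_5)$, giving $\mathrm{Tr}\,G_{27}=14+\chi_{(5)}(4a)$. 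Computing $\chi_{(5)}(4a)$ yields $4$ for D and E, but $0$ for F, G, H, which then demand the impossible $m=-1$.

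Finally, I would distinguish D from E using $S_{1,\beta_j}=\tfrac15\neq0$ and Lemma \ref{2.1}(b): some irreducible component of $\rho$ must contain both $0$ and $t_{\beta_j}\in\{1/5,2/5,3/5,4/5\}$ in its $\cT$. In decomposition (D), the available $\cT$-sets are $\{0,0,1/13,\ldots,12/13\}$ (from $\rho_{14}^{(13)}$), $\{1/5,2/5,3/5,4/5\}$ (from $\chi_6$), and $\{0\}$ (from each $\chi_1$); none contains both a zero and a primitive 5-th root. This contradicts $S_{1,\beta_j}\neq 0$ and rules out (D), leaving (E). The hardest part I anticipate is the parity check: mixed-parity decompositions such as $\chi_2+\chi_4+3\chi_1$ survive the dimension and multiplicity-of-zero filters, and Lemma \ref{2.1}(g) is silent when $n_+(t)=n_-(t)$, so one may need either a direct self-duality argument or an auxiliary Galois-trace computation at $\sigma_{-1}$ to complete this step cleanly.
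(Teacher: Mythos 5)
Your plan follows the same broad strategy as the paper --- decompose $\rho_{(5)}$, use a Galois-matrix trace at an automorphism that is trivial mod 13 but acts nontrivially mod 5, and then finish by a Lemma~\ref{2.1}-type argument --- but there are two genuine gaps, one of which you flagged yourself.

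The first gap is unacknowledged and is the more serious one. You assert right at the start that the multiplicity of $0$ in $\cT(\rho_{(5)})$ is $4$, by ``subtracting from the six simples $\omega_i,\alpha_j$ with $T=1$.'' This silently assumes that none of the $\beta_j$ have $T_{\beta_j\beta_j}=1$. At this stage that is not known. What is known is that the conductor of $\rho_{(5)}$ is exactly $5$, which forces at least one $\beta_j$ to have $T$ of order $5$; but the Galois-invariance of $\cT(\rho)$ only pairs up $1/5\leftrightarrow 4/5$ and $2/5\leftrightarrow 3/5$ (the squares mod $5$ are $\{1,4\}$), so a priori it is entirely possible that exactly two of the $\beta_j$ have nonzero $t$. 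In that scenario the multiplicity of $0$ in $\cT(\rho_{(5)})$ is $6$, not $4$, and your enumeration misses candidates such as $\chi_4\oplus 5\chi_1$ or $\chi_5\oplus 5\chi_1$. Your $G_{27}$ trace constraint does not rule these out either: with two $\beta_j$ having $T=1$ they can be $\sigma_{27}$-fixed, so the clean identity $\mathrm{Tr}\,G_{27}=15+m$ breaks. The paper closes this gap by an explicit computation: assuming $\rho_5\iso\rho_4^{(5)}$ (or $\rho_5^{(5)}$), it writes down the Galois matrix $G_{2;4}^{(5)}=\left(\begin{smallmatrix}-1&0&0\\0&0&-1\\0&-1&0\end{smallmatrix}\right)$ and uses Lemma~\ref{2.1}(e),(f) together with $S_{1\beta_j}>0$ to force $\epsilon_2(\beta_j)=-1$, contradicting the known sign $\epsilon_\sigma(\beta_j)=+1$. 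Some concrete step of this kind is needed before your bookkeeping makes sense.

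The second gap is the parity check you yourself identify. With $\|\Phi_5\|=4$ in hand, it is manageable: the eigenvalues $1/5,\ldots,4/5$ all have multiplicity one in $\cT(\rho)$ (there are only four $\beta$'s left and they are distinct Galois images), so Lemma~\ref{2.1}(g) gives $n_+(k/5)=1$, $n_-(k/5)=0$, which already kills any odd summand with a nonzero $\cT$-eigenvalue; the only odd irrep whose $\cT$ is contained in $\{0\}$ has conductor dividing $2$, not $5$. So the parity issue evaporates, but only once the first gap is filled. Modulo these repairs, your finishing moves are sound and differ slightly from the paper's in an interesting way: you use the trace of $G_{27}$ (with $27\equiv 1\bmod 13$, $\equiv 2\bmod 5$) without invoking the modular invariant $(1\,1\,1\,2\,1\,0\cdots)^\transpose$, settling for the weaker bound $\chi_{(5)}(4a)\in\{1,2,3,4\}$ and then eliminating $\chi_6\oplus 4\chi_1$ by Lemma~\ref{2.1}(b); the paper instead uses the modular invariant to pin $\mathrm{Tr}\,G_{-12}$ exactly and then compares $\dim\rho_5-\chi_5(4a)$ across the remaining reducible decompositions. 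Your route to distinguish the final two candidates via Lemma~\ref{2.1}(b) is arguably slicker, but the earlier omission of the $\|\Phi_5\|<4$ case is essential and cannot be left out.
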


\begin{proof} We can write $\rho_{(5)}=\rho_5\oplus\rho_1$, where every subrepresentation of $\rho_5$
has conductor exactly 5, and $\rho_1$ consists of exactly $(8-\mathrm{dim}\,\rho_5)$ copies of the trivial
representation 1.
The only $T_{xx}$ we need to constrain are the four $\beta_i$, because the other entries in
$\mathcal{R}$  all have $T_{xx}=1$. Recall from Theorem \ref{compatible} that $N=5\times 13$.
Let $\Phi_5$ consist of those $x\in\{\beta_1,\ldots,\beta_4\}$ with $t_x\ne 0$. Then $t_x\in\frac{1}{5}\bbZ$
for all $x\in\Phi_5$, and any $x\not\in\Phi_5\cup\Phi_{13}$ has $t_x=0$.

Suppose for contradiction that $\|\Phi_5\|<4$. Then $\rho_5\cong \rho_4^{(5)}$ or $\rho_5^{(5)}$, since
by Lemma \ref{2.1}(c) $0\in\cT(\rho_5)$. Suppose $\rho_5\cong\rho_4^{(5)}$ (the argument handling its
Galois associate $\rho_5^{(5)}$ is identical). The irrep $\rho^{(5)}_4$ is generated by matrices
$$S^{(5)}_4=\frac{1}{5}\left(\begin{matrix}c-c'&\sqrt{2}c-\sqrt{2}c'&\sqrt{2}c-\sqrt{2}c'\\ \sqrt{2}c-\sqrt{2}c'&2c+3c'&-3c-2c'\\ \sqrt{2}c-\sqrt{2}c'&-3c-2c'&2c+3c'\end{matrix}\right)\,,\ T^{(5)}_4=\mathrm{diag}(1,\xi_5,\xi_5^4)\,,$$
where $c=2\cos(2\pi/5)$, $c'=2\cos(4\pi/5)$. These have Galois matrix (recall \eqref{galoismatrix}) 
$$G_{2;4}^{(5)}=ST^3ST^2ST^3=\left(\begin{smallmatrix} -1&0&0\\ 0&0&-1\\ 0&-1&0\end{smallmatrix}\right)\,.$$
Write $x,x'$ for the unique simples with
$T_{xx}=\xi_5,T_{x'x'}=\xi_5^4$. Then by Lemma \ref{2.1}(e),(f) and 
$S_{1x}=S_{1x'}>0$, we would obtain $\cQ_x=\cQ_{x'}$ and $\varepsilon_2(x)=(G_2)_{xx'}=-\cQ_{x'}/\cQ_x=-1$, contradicting that we know  $\epsilon_\sigma(\beta_i)= +1$ for all Galois automorphisms $\sigma$. 

Therefore, $\|\Phi_5\|=4$, so there are exactly 6 simples $x\in\Phi$ with $T_{xx}=1$, namely $\{\omega_i\}
\cup\{\alpha_i\}$. From \eqref{galT}, the Galois automorphism $\sigma_{-12}$ fixes each $x\in\Phi_{13}$,
and permutes $\Phi_5$ without fixed points. From Theorem \ref{compatible}(3)(i) and the values 
$S_{1,\omega_i}\in\bbQ[\xi_{13}]$, we know $\sigma_{-12}$ fixes each $\omega_i$. The modular
invariant $(1\,1\,1\,2\,1\,0\,0^{16})^\transpose$   must by definition be an eigenvector of all $\rho(\gamma)$,
and hence $\rho\left(\begin{smallmatrix}-12&0\\ 0&-12^{-1}\end{smallmatrix}\right)$, with eigenvalue 1,
which implies $\sigma_{-12}$ fixes each $\alpha_i$.  We already knew all $\varepsilon_{-12}
(x)=+1$. Therefore, exactly as in the derivation of \eqref{charconstr13}, we obtain Tr$\,G_{-12}=
22-4=\chi_{13}(1a)+\chi_5(4a)+\chi_1(1a)$, i.e.
\begin{equation} \mathrm{dim}\,\rho_5-\chi_5(4a)=4\,.\label{dimrho5}\end{equation}

Consider now that $\rho_5$ is not irreducible; then $\rho_5\cong \rho'\oplus\rho''$ where $\rho'\in\{
\rho^{(5)}_4,\rho^{(5)}_5\}$ and $\rho''\in\{\rho_2^{(5)},\rho_3^{(5)},\rho^{(5)}_4,\rho^{(5)}_5\}$
and dim$\,\rho_5-\chi_5(4a)=6$ or 8, contradicting \eqref{dimrho5}.

Thus $\rho_5$ must be irreducible, with $0\in\cT(\rho_5)$, of dimension $\ge 5$, and even. The only 
possibility is $\rho_5\cong\rho^{(5)}_8$.
\end{proof}

A matrix realisation of $\rho_8^{(5)}$ is given in Appendix \ref{appendix:matrices}.

\section{End game}
\label{sec:endgame}

We have obtained in Propositions \ref{prop:group-of-12} and \ref{prop:group-of-4} that the modular
data $\rho$ of the centre of the extended Haagerup satisfies $\rho\iso \rho^{(13)}_{14}\oplus
\rho^{(5)}_8\oplus 1\oplus 1\oplus 1$.
Explicit matrix realisations of $\rho_{14}^{(13)}$ and of $\rho_8^{(5)}$ are in
Appendix \ref{appendix:matrices}.
Define $S'$ to be the corresponding block diagonal
matrix and
$T'$ to be the corresponding diagonal matrix. The statement that $\rho \iso \rho^{(13)}_{14}\oplus
\rho^{(5)}_8\oplus 1\oplus 1\oplus 1$ is that there is an invertible
22-by-22 matrix $Q$ so that $QS=S'Q$ and $QT=T'Q$.

We have established that the simples $\{\beta_i\}$ have $T$-eigenvalues the four primitive 5-th roots of unity;
as there is nothing to distinguish the $\beta_i$ amongst themselves we may assume the eigenvalues appear in any
convenient order.
Similarly, we know that the simples $\{\gamma_i\}\cup\{\delta_i\}\cup\{\epsilon_i\}$ have $T$-eigenvalues which are all
the primitive 13-th roots of unity. The $T$-eigenvalues for $\gamma_i$ determine the $T$-eigenvalues for $\delta_i$ and
$\epsilon_i$ since
\begin{align*}
T_{\delta_i \delta_i} & = T_{\gamma_i^{\sigma_{16}} \gamma_i^{\sigma_{16}}} = (T_{\gamma_i \gamma_i}^{16^2}) \\
\intertext{and}
T_{\epsilon_i \epsilon_i} & = T_{\gamma_i^{\sigma_{16}^2} \gamma_i^{\sigma_{16}^2}} = (T_{\gamma_i \gamma_i}^{16^4}).
\end{align*}
However it remains to decide which four of the 13-th primitive roots appear as the $T$-eigenvalues for the $\gamma_i$.
We look at top left entry of the equation $STS=CT^*S^*T^*$. The right hand side is simply $\cD^{-1}$, while
the left hand side becomes $\sum_{x \in \Phi} \frac{\dim(x)^2}{\cD^2} T_{xx}$. We find that this is only true if the
$$
 \frac{1}{2\pi i}\log(T_{\gamma_i \gamma_i})  =  \left( \frac {9}{13}, \frac{6}{13}, \frac{4}{13},
\frac{7}
{13} \right)
$$
(up to the permutation, which is fixed as shown). This we may take
\begin{equation}\label{Tfinal}
\frac{1}{2\pi i}\log(T_{xx}) = \left(0,0,0,0,0,0,\frac 1 5,\frac 2 5, \frac 3 5, \frac 4 5, \frac {9}{13}, \frac{6}{13}, \frac{4}{13}, \frac{7}
{13},\frac{3}{13}, \frac{2}{13}, \frac{10}{13}, \frac{11}{13},\frac{1}{13}, \frac{5}{13}, \frac{12}{13}, \frac{8}
{13}\right)\,.\end{equation}

We know the 16 simples $\{\beta_i\}\cup\{\gamma_i\}\cup\{\delta_i\}\cup\{\epsilon_i\}$ are all unique, in the sense of Section \ref{sec:basic-lemmas}, and so most entries of $Q$ are determined from Lemma \ref{2.1}.
The equation $QT = T'Q$ tells us that $Q$ is the product of a permutation and a block diagonal matrix with all blocks
1-by-1 except for one, corresponding to 1-eigenvalues of $T$, which is 6-by-6. Much of that 6-by-6 block is irrelevant. 

We also have learned much about $S$, some of which is collected in hypotheses (a)-(g) in the
following Theorem (e.g. we know $S^2=I$, since all simples are self-dual, so (f) is its $(x,x)$-entry 
for $x \in \{\gamma_i, \delta_i, \epsilon_i\}$). 

\begin{thm}
\label{thm:S}
Suppose
\begin{enumerate}[(a)]
\item $S'$ and $T'$ are the explicit matrices for $\rho^{(13)}_{14}\oplus
\rho^{(5)}_8\oplus 1\oplus 1\oplus 1$ appearing in Appendix \ref{appendix:matrices},
\item $T$ is the 22-by-22 diagonal matrix with entries given by Equation \eqref{Tfinal},
\item $S$ is a 22-by-22 matrix whose first three rows and columns are given by Equation \eqref{eq:dimensions} and
Theorem \ref{compatible}(2),
\item we have the modular invariants appearing in Equation \eqref{twoModInv},
\item $S$ is symmetric,
\item $\sum_y S_{xy} S_{xy} = 1$ for $x \in \{\gamma_i, \delta_i, \epsilon_i\}$,  and 
\item $Q$ is invertible and $QS=S'Q$ and $QT=T'Q$.
\end{enumerate}
Then $S$ is given by

\begin{align*}
S & = \left(
\begin{array}{ccc|ccccccc|ccc}
\multicolumn{3}{c|}{
	\multirow{3}{*}{
		\scalebox{1.5}{$U$}
	}
} &
$1/5$ &
$1/5$ &
$1/5$ &
$1/5$ &
$1/5$ &
$1/5$ &
$1/5$ &
\multicolumn{3}{c}{
	\multirow{3}{*}{
		\scalebox{1.5}{$V$}
	}
} \\
 & & &
$1/5$ &
$1/5$ &
$1/5$ &
$1/5$ &
$1/5$ &
$1/5$ &
$1/5$ &
 & & \\
 & & &
$1/5$ &
$1/5$ &
$1/5$ &
$1/5$ &
$1/5$ &
$1/5$ &
$1/5$ &
 & & \\ \hline
$1/5$ &
$1/5$ &
$1/5$ &
$4/5$ &
$-1/5$ &
$-1/5$ &
$-1/5$ &
$-1/5$ &
$-1/5$ &
$-1/5$ &
\multicolumn{3}{|c}{
	\multirow{7}{*}{
		\scalebox{1.5}{$0$}
	}
} \\
$1/5$ &
$1/5$ &
$1/5$ &
$-1/5$ &
$4/5$ &
$-1/5$ &
$-1/5$ &
$-1/5$ &
$-1/5$ &
$-1/5$ &
 \\
$1/5$ &
$1/5$ &
$1/5$ &
$-1/5$ &
$-1/5$ &
$4/5$ &
$-1/5$ &
$-1/5$ &
$-1/5$ &
$-1/5$ &
 \\ \cline{7-10}
$1/5$ &
$1/5$ &
$1/5$ &
$-1/5$ &
$-1/5$ &
$-1/5$ &
\multicolumn{4}{|c|}{
	\multirow{4}{*}{
		\scalebox{1.5}{$W$}
	}
} & &
\\
$1/5$ &
$1/5$ &
$1/5$ &
$-1/5$ &
$-1/5$ &
$-1/5$ &
\multicolumn{4}{|c|}{}
\\
$1/5$ &
$1/5$ &
$1/5$ &
$-1/5$ &
$-1/5$ &
$-1/5$ &
\multicolumn{4}{|c|}{}
\\
$1/5$ &
$1/5$ &
$1/5$ &
$-1/5$ &
$-1/5$ &
$-1/5$ &
\multicolumn{4}{|c|}{}
\\
\hline
\multicolumn{3}{c|}{
	\multirow{3}{*}{
		\scalebox{1.5}{$V^t$}
	}
} & 
\multicolumn{7}{c|}{
	\multirow{3}{*}{
		\scalebox{1.5}{$0$}
	}
} &
$A$ &
$B$ &
$C$ \\
& & & & & & & & & & $B$ & $-C$ & $A$ \\
& & & & & & & & & & $C$ & $A$ & $-B$
\end{array}
\right)
\end{align*}
with $U, V, W, A, B$, and $C$ given below.
\end{thm}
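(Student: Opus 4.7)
The plan is to solve for the intertwiner $Q$ and then read off $S=Q^{-1}S'Q$ block by block.

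From hypothesis (g), $QT=T'Q$ together with Lemma \ref{lem:basic}(a) forces $Q_{iz,x}=0$ whenever $T'_{iz,iz}\ne T_{xx}$. Each of the 16 simples in $\{\beta_j\}\cup\{\gamma_j,\delta_j,\epsilon_j\}$ is unique in $\cT(\rho)$, with $T_{xx}$ matching a single eigenvalue of $T'$ lying either in $\rho^{(5)}_8$ (for the $\beta_j$) or in $\rho^{(13)}_{14}$ (for the $\gamma_j,\delta_j,\epsilon_j$). Lemma \ref{lem:basic}(d) then concentrates the corresponding row and column of $Q$ at a single nonzero entry $\cQ_x$ located at $(i_x,\hat x)$. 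The remaining six simples $\{\omega_i\}\cup\{\alpha_j\}$ all share $T_{xx}=1$; the $T'=1$ eigenspace is six-dimensional --- two eigenvectors in $\rho^{(13)}_{14}$, one in $\rho^{(5)}_8$, and three in the trivial summands --- and $Q$ acts on it as an unknown invertible $6\times 6$ block $Q^{(6)}$.

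With the structure of $Q$ in hand, Lemma \ref{lem:basic}(f) applied to the known first column from hypothesis (c) fixes each $\cQ_x$ up to sign; hypothesis (e) together with the explicit entries of $S'$ pins down the signs. Lemma \ref{lem:basic}(e) then gives $S_{xy}=S'_{i_x;\hat x\hat y}\cQ_y/\cQ_x$ for $x,y$ both unique and in the same irreducible component, producing the $\beta\beta$-block $W$ from $\rho^{(5)}_8(s)$ and the $\gamma\delta\epsilon$-block (and hence the pattern $A,B,C$) from $\rho^{(13)}_{14}(s)$. For $x,y$ unique but in different components, the same formula returns zero because $S'$ is block-diagonal across the summands of $\rho$; this furnishes the zero blocks between $\{\beta_j\}$ and $\{\gamma_j,\delta_j,\epsilon_j\}$.

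It remains to handle the entries involving the non-unique simples $\{\omega_i\}\cup\{\alpha_j\}$. Hypothesis (c) supplies the three $\omega$-rows/columns directly; since $S_{1,\alpha_j}=S_{1,\beta_j}=1/5\in\bbQ$ is Galois-fixed, Theorem \ref{compatible}(2) yields $S_{\omega_i,\alpha_j}=S_{\omega_i,\beta_j}=1/5$ uniformly. To show $S_{\alpha_i,\gamma_j}=S_{\alpha_i,\delta_j}=S_{\alpha_i,\epsilon_j}=0$, I combine the two modular invariants $v^{(1)},v^{(2)}$ of hypothesis (d): for each $x\in\{\gamma_j,\delta_j,\epsilon_j\}$, the $S$-fixed identities $(Sv^{(k)})_x=v^{(k)}_x=0$ produce two linear equations in the three unknowns $S_{\alpha_i,x}$ whose $\omega$-parts are already known; hypothesis (f) (orthonormality of the $x$-row, with the $\beta$- and $\gamma\delta\epsilon$-contributions already determined) supplies the remaining quadratic constraint, and all three entries are forced to vanish. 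Finally, the $\alpha\alpha$ and $\alpha\beta$ blocks come from computing $S=Q^{-1}S'Q$ restricted to the 6-dimensional $T=1$ subspace, using the $U,V$ blocks fixed by hypothesis (c) and the $S$-fixed vectors of hypothesis (d) to pin down $Q^{(6)}$ up to gauge; the constant pattern ($4/5$ on the $\alpha\alpha$ diagonal, $-1/5$ off-diagonal and throughout the $\alpha\beta$ block) then follows by direct substitution together with symmetry (e).

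The main obstacle is the 6-dimensional $T=1$ block: the $\alpha_j$ are \emph{a priori} indistinguishable, $Q^{(6)}$ genuinely mixes three distinct irreducible components, and Lemma \ref{lem:basic}(d)--(f) do not apply verbatim. Forcing the $\alpha$-vs-$\gamma\delta\epsilon$ entries to vanish requires simultaneously deploying hypotheses (c)--(f); once those zeros are in place, everything else reduces to routine substitution into the formulas of Lemma \ref{lem:basic} and the explicit matrices in Appendix \ref{appendix:matrices}.
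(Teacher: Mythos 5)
The paper's proof of this theorem is essentially a description of a computer calculation (in {\tt EndGame.nb}): it solves successive layers of linear equations in the unknown $S_{xy}$ and $Q_{ix}$, then a final system of quadratics with five solutions, of which four are discarded because they force $\det Q=0$. Your proposal follows the same overall strategy (eliminate using hypotheses (c)--(g) and the structural constraints from $QT=T'Q$), but tries to make the elimination conceptual via Lemma~\ref{lem:basic}. There are, however, a few genuine gaps.

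First, Lemma~\ref{lem:basic}\eqref{unique-f} does not apply to the twelve simples $\{\gamma_j,\delta_j,\epsilon_j\}$: the hypothesis there is that $0\in\cT(\rho_{i_x})$ has \emph{multiplicity one}, but $\rho^{(13)}_{14}$ has $T$-eigenvalue $1$ with multiplicity two (see $\Lambda(\rho^{(13)}_{14})$ in Appendix~\ref{appendix:matrices}). So your claim that hypothesis (c) "fixes each $\cQ_x$ up to sign" via Lemma~\ref{lem:basic}\eqref{unique-f} is only supported for the four $\beta_j$. For the $\gamma,\delta,\epsilon$ objects the formula expressing $S_{1,\gamma_j}$ involves two unknown entries of the $6\times 6$ block $Q^{(6)}$, so $\cQ_{\gamma_j}$ is not directly determined this way. (You can still conclude via Lemma~\ref{lem:basic}\eqref{unique-e} that all $\cQ_x$ within each component agree up to sign, which is enough to determine $\sum_y S_{xy}^2$ over the $\gamma\delta\epsilon$ block — that part of your orthonormality argument is salvageable.)

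Second, and more seriously, the resolution of the $6\times 6$ block $Q^{(6)}$ is waved through as "direct substitution together with symmetry." In fact the paper's proof shows that even after imposing all of (a)--(f) and the accessible consequences of (g), the system of quadratic relations coming from $QS=S'Q$ still has \emph{five} solutions, and the correct one is isolated only because the other four force $\det Q=0$. Your argument never invokes invertibility of $Q$ at this stage, so as written it cannot distinguish the intended solution from the spurious ones. Relatedly, the intermediate deduction that the $\alpha$-vs-$\gamma\delta\epsilon$ entries vanish is plausible (and is in fact what the paper's step~5, "$6S_{\alpha_1 x}^2=0$", encodes), but it emerges only after the earlier linear passes have already tied the three $S_{\alpha_i,x}$ to a single parameter; you assert this ("hypotheses (c)--(f)") without exhibiting how those linear relations arise from $QS=S'Q$, which is where the real content of the proof lives. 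In short: the skeleton matches the paper's, but the two decisive mechanisms — the multiplicity-two eigenspace of $T'$ in $\rho^{(13)}_{14}$ and the $\det Q\neq 0$ tie-break — are not actually handled.
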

\begin{proof}
This calculation appears in {\tt code/EndGame.nb}, bundled with the {\tt arXiv} sources of this article.
We write $S=(S_{xy})_{x,y\in \Phi}$, and $Q=(Q_{ix})_{1 \leq i \leq 22, x \in \Phi}$.
The following simple steps completely identify $S$.
\begin{enumerate}
\item Solve the linear equations in the $\{S_{xy}\}$ coming from the modular invariants and symmetry.
\item Solve the linear equations in the $\{Q_{ix}\}$ coming from $QT=T'Q$ (this just shows that $Q$ is the product of a
permutation and a block diagonal matrix, as mentioned above).
\item Look at entries of $QS - S'Q$ which do not involve any of the remaining unknown $S_{xy}$; these are linear
equations in the $\{Q_{ix}\}$, which we can solve.
\item Observe that $\det Q$ has a factor of $Q_{1,\omega_0}$, so this must not be zero. Find all the equations coming
from $QS - S'Q$ of the form $Q_{1,\omega_0} X = 0$, where $X$ is a linear combination of the $\{S_{xy}\}$, and set $X=0$
for each.
\item Now, the equations $\sum_y S_{xy} S_{xy} = 1$ for $x \in \{\gamma_i, \delta_i, \epsilon_i\}$ simplify to $6
S_{\alpha_1 x}^2 = 0$ for these same $x$, so all these entries of the $S$-matrix must be zero.
\item Observe that $\det Q$ has a factor of $Q_{15,\omega_0}$, so this must not be zero. Find all the equations coming
from $QS - S'Q$ of the form $Q_{15,\omega_0} X = 0$, where $X$ is a linear combination of the $\{S_{xy}\}$, and set
$X=0$
for each.
\item Finally, treat the equations $QS - S'Q$ as quadratics in $\{S_{xy}\}$ and $\{Q_{ix}\}$ jointly, and solve them;
there are only 5 solutions, of which 4 make $\det Q = 0$. The remaining solution is the one described in the statement
of the Theorem. \qedhere
\end{enumerate}
\end{proof}

In fact, the same argument works if we disregard the modular invariant $$(1\,1\,1\,1\,1\,1\,0\,\ldots\,0),$$ although then at
the final step the quadratics have 64 solutions, of which only one allows $\det Q \neq 0$. We make this observation because 
there is a candidate third fusion category $EH3$ in the Morita equivalence class of the even parts of extended Haagerup.
One
can determine the fusion rules of this category, if it exists. The argument of \cite{1404.3955} determines the
dimensions of the irreducibles in $Z(EH3)$ (exactly the same as the dimensions here), and that $Z(EH3)$ would have the
modular invariant $(1\,1\,1\,2\,1\,0\,\ldots\,0)$, but not necessarily $(1\,1\,1\,1\,1\,1\,0\,\ldots\,0)$. Thus the fact
that the argument here does not rely on this second modular invariant shows that the centre of any fusion category with
the fusion rules of $EH3$ would have the same $S$ and $T$ matrices as the centre of extended Haagerup. Of course, the
$S$ and $T$ matrices are not known to be complete invariants of the centre. If they were, however, this discussion would
allow
one to establish the existence of a third category, Morita equivalent to $EH1$ and $EH2$, merely by constructing any
fusion category with the appropriate fusion ring.

In the above theorem describing $S$ we have, with $c_k = \cos(2 \pi k / 65)$,
\begin{align*}
U & = \begin{pmatrix}
u_1 & u_2 & u_3 \\
u_2 & u_3 & u_1 \\
u_3 & u_1 & u_2
\end{pmatrix},
\end{align*}
the $u_i$ are the roots of 
$21125 \lambda^3 - 8450 \lambda^2 + 585 \lambda - 1$,
\begin{align*}
u_1 & = \frac{1}{65}\left(7 - 10c_{20} - 10c_{30}\right) \\
	& \simeq 0.00175363 \displaybreak[1]\\
u_2 & = \frac{1}{65}\left(7 + 10c_{8} + 10c_{18} - 10c_{21} + 10c_{25} - 10c_{31}\right)\\
	& \simeq 0.311623 \displaybreak[1]\\
u_3 & = \frac{1}{65}\left(12 - 10c_{8} - 10c_{18} + 10c_{20} + 10c_{21} - 10c_{25} + 10c_{30} + 10c_{31}\right) \\
	& \simeq 0.0866238,
\end{align*}
\begin{equation*}
V = \left(\begin{array}{cccccccccccc}
v_1 & v_1 & v_1 & v_1 & -v_2 & -v_2 & -v_2 & -v_2 & -v_3 & -v_3 & -v_3 & -v_3 \\
v_2 & v_2 & v_2 & v_2 & -v_3 & -v_3 & -v_3 & -v_3 & -v_1 & -v_1 & -v_1 & -v_1 \\
v_3 & v_3 & v_3 & v_3 & -v_1 & -v_1 & -v_1 & -v_1 & -v_2 & -v_2 & -v_2 & -v_2
\end{array}\right),
\end{equation*}
the $v_i$ are the roots of $169 \lambda^3 - 13 \lambda - 1$, 
\begin{align*}
v_1 & = \frac{2}{13}\left(c_{8} + c_{18} +c_{20} - c_{21} +c_{25} +c_{30} -c_{31}\right) \\
	& \simeq 0.30969 \displaybreak[1]\\
v_2 & = \frac{1}{13}\left(1 - 4c_{8} - 4c_{18} + 2c_{20} + 4c_{21} - 4c_{25} + 2c_{30} +  4c_{31}\right) \\
    & \simeq -0.224999 \displaybreak[1]\\
v_3 & = \frac{1}{13}\left(-1 + 2c_{8} + 2c_{18} - 4c_{20} - 2c_{21} + 2c_{25} - 4c_{30} - 2c_{31}\right) \\
	& \simeq -0.0848702,
\end{align*}
\begin{align*}
W & = \frac{1}{10}\begin{pmatrix}
3-\sqrt{5} & -2-2\sqrt{5} & -2+2\sqrt{5} & 3+\sqrt{5} \\
-2-2\sqrt{5} & 3+\sqrt{5} & 3-\sqrt{5} & -2+2\sqrt{5} \\
-2+2\sqrt{5} & 3-\sqrt{5} & 3+\sqrt{5} & -2-2\sqrt{5} \\
3+\sqrt{5} & -2+2\sqrt{5} & -2-2\sqrt{5} & 3-\sqrt{5}
\end{pmatrix}
\end{align*}
and $A, B$, and $C$ are band matrices, so $A_{ij} = a_{i+j-1 \pmod{4}}$, etc., and $\{a_1, a_3, b_1, b_3, d_1, d_3\}$ are the roots of $28561 \lambda ^6-28561 \lambda ^5+8788 \lambda ^4-507 \lambda ^3-169 \lambda ^2+26 \lambda -1$, while $\{a_2, a_4, b_2, b_4, d_2, d_4\}$ are the roots of $28561 \lambda ^6-6591 \lambda ^4-507 \lambda ^3+338 \lambda ^2+39 \lambda -1$:
\begin{align*}
a_1 & = \frac{1}{13}\left(2 c_{8}-2 c_{10}+2 c_{18}-2 c_{20}-2 c_{21}+4 c_{25}-4 c_{30}-2 c_{31}+1\right) \displaybreak[1]\\
	& \simeq 0.07470114748\\
a_2 & = \frac{1}{13}\left(-2 c_{8}+2 c_{10}-2 c_{18}+6 c_{20}+2 c_{21}-2 c_{25}+2 c_{31}+1\right) \displaybreak[1] \\
	& \simeq 0.2714005479\\
a_3 & = \frac{1}{13}\left(2 c_{8}+2 c_{10}+2 c_{18}-2 c_{20}-2 c_{21}-2 c_{31}+2\right) \displaybreak[1] \\
	& \simeq 0.3520512456\\
a_4 & = \frac{1}{13}\left(-2 c_{10}-2 c_{20}+4 c_{30}\right) \displaybreak[1] \\
	& \simeq -0.1865303711\\
b_1 & = \frac{1}{13}\left(2 c_{10}-2 c_{20}-2 c_{25}-2\right) \displaybreak[1] \\ 
	& \simeq -0.1595713243\\
b_2 & = \frac{1}{13}\left(4 c_{8}+4 c_{18}+2 c_{20}-4 c_{21}-2 c_{25}-4 c_{31}\right) \displaybreak[1] \\ 
	& \simeq 0.3315913069\\
b_3 & = \frac{1}{13}\left(-2 c_{10}-2 c_{20}+2 c_{25}-4 c_{30}-3\right) \displaybreak[1] \\
	& \simeq -0.4369214224\\
b_4 & = \frac{1}{13}\left(-2 c_{8}-2 c_{18}+2 c_{21}+4 c_{25}+2 c_{30}+2 c_{31}\right) \displaybreak[1] \\
	& \simeq -0.02172236355\\
d_1 & = \frac{1}{13}\left(2 c_{8}+2 c_{10}+2 c_{18}-2 c_{21}+2 c_{30}-2 c_{31}-2\right) \displaybreak[1] \\
	& \simeq 0.1502976190\\
d_2 & = \frac{1}{13}\left(-6 c_{8}+6 c_{10}-6 c_{18}+4 c_{20}+6 c_{21}-4 c_{25}+4 c_{30}+6 c_{31}+2\right) \displaybreak[1] \\
	& \simeq 0.2171593392\\
d_3 & = \frac{1}{13}\left(2 c_{8}-2 c_{10}+2 c_{18}-2 c_{21}+4 c_{25}-2 c_{30}-2 c_{31}-3\right) \displaybreak[1] \\
	& \simeq -0.12705247914\\
d_4 & = \frac{1}{13}\left(2 c_{8}-6 c_{10}+2 c_{18}-2 c_{20}-2 c_{21}-2 c_{30}-2 c_{31}-1\right) \\
	& \simeq -0.4421581056.
\end{align*}

Finally, the matrix $Q$  is not uniquely determined; a nice choice is
\begin{align*} 
Q & = \left(\begin{smallmatrix}
 1 & 0 & -1 & 0 & 0 & 0 & 0 & 0 & 0 & 0 & 0 & 0 & 0 & 0 & 0 & 0 & 0 & 0 & 0 & 0 & 0 & 0 \\
 0 & 0 & 0 & 0 & 0 & 0 & 0 & 0 & 0 & 0 & 0 & 0 & 0 & 0 & 0 & 0 & 0 & 0 & -1 & 0 & 0 & 0 \\
 0 & 0 & 0 & 0 & 0 & 0 & 0 & 0 & 0 & 0 & 0 & 0 & 0 & 0 & 0 & -1 & 0 & 0 & 0 & 0 & 0 & 0 \\
 0 & 0 & 0 & 0 & 0 & 0 & 0 & 0 & 0 & 0 & 0 & 0 & 0 & 0 & 1 & 0 & 0 & 0 & 0 & 0 & 0 & 0 \\
 0 & 0 & 0 & 0 & 0 & 0 & 0 & 0 & 0 & 0 & 0 & 0 & 1 & 0 & 0 & 0 & 0 & 0 & 0 & 0 & 0 & 0 \\
 0 & 0 & 0 & 0 & 0 & 0 & 0 & 0 & 0 & 0 & 0 & 0 & 0 & 0 & 0 & 0 & 0 & 0 & 0 & -1 & 0 & 0 \\
 0 & 0 & 0 & 0 & 0 & 0 & 0 & 0 & 0 & 0 & 0 & -1 & 0 & 0 & 0 & 0 & 0 & 0 & 0 & 0 & 0 & 0 \\
 0 & 0 & 0 & 0 & 0 & 0 & 0 & 0 & 0 & 0 & 0 & 0 & 0 & 1 & 0 & 0 & 0 & 0 & 0 & 0 & 0 & 0 \\
 0 & 0 & 0 & 0 & 0 & 0 & 0 & 0 & 0 & 0 & 0 & 0 & 0 & 0 & 0 & 0 & 0 & 0 & 0 & 0 & 0 & -1 \\
 0 & 0 & 0 & 0 & 0 & 0 & 0 & 0 & 0 & 0 & -1 & 0 & 0 & 0 & 0 & 0 & 0 & 0 & 0 & 0 & 0 & 0 \\
 0 & 0 & 0 & 0 & 0 & 0 & 0 & 0 & 0 & 0 & 0 & 0 & 0 & 0 & 0 & 0 & 1 & 0 & 0 & 0 & 0 & 0 \\
 0 & 0 & 0 & 0 & 0 & 0 & 0 & 0 & 0 & 0 & 0 & 0 & 0 & 0 & 0 & 0 & 0 & -1 & 0 & 0 & 0 & 0 \\
 0 & 0 & 0 & 0 & 0 & 0 & 0 & 0 & 0 & 0 & 0 & 0 & 0 & 0 & 0 & 0 & 0 & 0 & 0 & 0 & -1 & 0 \\
 0 & 1 & -1 & 0 & 0 & 0 & 0 & 0 & 0 & 0 & 0 & 0 & 0 & 0 & 0 & 0 & 0 & 0 & 0 & 0 & 0 & 0 \\
 1 & 1 & 1 & -1 & -1 & -1 & 0 & 0 & 0 & 0 & 0 & 0 & 0 & 0 & 0 & 0 & 0 & 0 & 0 & 0 & 0 & 0 \\
 0 & 0 & 0 & 0 & 0 & 0 & -1 & 0 & 0 & 0 & 0 & 0 & 0 & 0 & 0 & 0 & 0 & 0 & 0 & 0 & 0 & 0 \\
 0 & 0 & 0 & 0 & 0 & 0 & 0 & -1 & 0 & 0 & 0 & 0 & 0 & 0 & 0 & 0 & 0 & 0 & 0 & 0 & 0 & 0 \\
 0 & 0 & 0 & 0 & 0 & 0 & 0 & 0 & -1 & 0 & 0 & 0 & 0 & 0 & 0 & 0 & 0 & 0 & 0 & 0 & 0 & 0 \\
 0 & 0 & 0 & 0 & 0 & 0 & 0 & 0 & 0 & -1 & 0 & 0 & 0 & 0 & 0 & 0 & 0 & 0 & 0 & 0 & 0 & 0 \\
 0 & 0 & 0 & 0 & 1 & -1 & 0 & 0 & 0 & 0 & 0 & 0 & 0 & 0 & 0 & 0 & 0 & 0 & 0 & 0 & 0 & 0 \\
 0 & 0 & 0 & 1 & 0 & -1 & 0 & 0 & 0 & 0 & 0 & 0 & 0 & 0 & 0 & 0 & 0 & 0 & 0 & 0 & 0 & 0 \\
 1 & 1 & 1 & 0 & 0 & 3 & 0 & 0 & 0 & 0 & 0 & 0 & 0 & 0 & 0 & 0 & 0 & 0 & 0 & 0 & 0 & 0
 \end{smallmatrix}\right)
\end{align*}
 
As a consistency check, we offer:
\begin{lem}
The Verlinde formula gives non-negative integer fusion multiplicities, which are consistent with the restriction functor
$Z(EH) \to EH$.
\end{lem}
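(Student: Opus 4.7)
The plan is to reduce the lemma to a symbolic computation using the $S$ matrix obtained in Theorem \ref{thm:S}. With every entry of $S$ now pinned down, Verlinde's formula \eqref{Verlinde2} becomes an unambiguous finite calculation: each $N_{xy}^z$ is a specific polynomial expression in the entries of $S$ (with $S_{1w} = \dim(w)/\cD$ read off from \eqref{eq:dimensions}). All entries of $S$ lie in $\bbQ[\xi_{65}]$, since the blocks $U$, $V$, $A$, $B$, $C$ are explicitly in $\bbQ[\xi_{65}]$ and the $\sqrt 5$ appearing in the $W$ block already lies in $\bbQ[\xi_5]\subseteq\bbQ[\xi_{65}]$. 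Working in this single cyclotomic field, I will compute each of the $22^3$ values $N_{xy}^z$ and verify that every one simplifies to a non-negative integer; concretely this is carried out in the accompanying \texttt{Mathematica} notebook alongside the computations for Theorem \ref{thm:S}.

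For consistency with the restriction functor, recall that the forgetful functors $Z(EH)\to EHi$ (for $i=1,2$) are tensor functors, and so induce unital ring homomorphisms $\pi_i: K_0(Z(EH))\to K_0(EHi)$ whose matrices are the $A_{EHi}$ displayed in Section 5. Consistency with the $N_{xy}^z$ amounts to the identity
$$\pi_i([x])\cdot\pi_i([y]) \;=\; \sum_z N_{xy}^z\,\pi_i([z])$$
in $K_0(EHi)$ for every pair of simples $x,y\in\Phi$, where the product on the left uses the (known) fusion rules of the even parts of the extended Haagerup subfactor from \cite{MR2979509}. With the $N_{xy}^z$ produced in the first step and the $A_{EHi}$ given explicitly, this reduces to a finite matrix identity that is mechanically verified.

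The main obstacle is not conceptual but computational: one must express all entries of $S$ in a uniform basis of $\bbQ[\xi_{65}]$ and carry out the exact arithmetic in \eqref{Verlinde2} so that cancellation actually produces visibly non-negative integer outputs (rather than merely algebraic numbers that happen numerically to be integers). I expect no surprises at either step: Verlinde's formula applied to honest modular data is guaranteed to return non-negative integers in principle, and the $A_{EHi}$ are the very input from which the dimensions, Galois structure, and $T$-spectrum used to derive $S$ were extracted, so the fusion ring of $Z(EH)$ cannot fail to be compatible with them.
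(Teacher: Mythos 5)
Your proposal matches what the paper actually does: the lemma is offered only as a computational consistency check, carried out by direct symbolic evaluation of Verlinde's formula with the explicit $S$ of Theorem~\ref{thm:S}, together with checking that the induced map on fusion rings respects the fusion coefficients via the $A_{EHi}$ matrices — precisely the two finite verifications you describe (the paper relegates them to the accompanying \texttt{Mathematica} notebook rather than writing them out). One small caveat: your closing remark that ``Verlinde's formula applied to honest modular data is guaranteed to return non-negative integers'' is not a shortcut here — Theorem~\ref{thm:S} was derived from hypotheses (a)--(g), which do \emph{not} include non-negativity or integrality of the $N_{xy}^z$, so until the computation is done there is no a priori guarantee that the candidate $S$ satisfies Definition~2.1(d); the check is a genuine one, not a formality.
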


\section{Character vectors}

A natural question is whether there is a vertex operator algebra (see e.g. \cite{MR2023933}) corresponding to the centre of the even part of
the extended Haagerup. This is at present too difficult to answer. However, in this section we obtain all possible
\textit{character vectors} with central charge $c\le 24$ compatible with the modular data computed 
in this paper. This should be information crucial for constructing the hypothetical vertex operator algebra,
or showing it cannot exist. Because the procedure for doing this is difficult to extract from the literature, we will include here
a more pedagogical treatment.

\subsection{The general theory}

By definition, a vertex operator algebra and its modules carry actions of the Virasoro algebra, so the vertex operator
algebra characters are expressible as combinations of Virasoro ones.
The Virasoro characters relevant to our discussion are given next.
When $c>1$ and $h>0$, there is a Virasoro irrep $V(c,h)$ with character
$$\operatorname{ch}_{V(c,h)}=q^{h-c/24}\prod_{n=1}^\infty (1-q^n)^{-1}=\sum_{m=0}^\infty p(m)q^{m+h-c/24}\,.$$
When $c>1$ and $h=0$, the Virasoro irrep $V(c,h)$ has character
$$\operatorname{ch}_{V(c,0)}(\tau)=q^{-c/24}\prod_{n=1}^\infty (1-q)(1-q^n)^{-1}=\sum_{m=0}^\infty (p(m)-p(m-1))q^{m-c/24}\,,$$ 
where $p(m)$ is the $m$th partition number, and where $q=e^{2\pi i\tau}$.

\begin{Def} Suppose $\rho$ is a $d$-dimensional representation of SL$(2,\bbZ)$ with $T=\rho \left(\begin{smallmatrix}1 & 1 \\ 0 & 1\end{smallmatrix}\right)$ a diagonal matrix. By a character vector $\bbX(\tau)$ for $\rho$, we mean: 
\begin{enumerate}[(i)]
\item $\bbX:\bbH\rightarrow\bbC^d$ is holomorphic throughout the upper half-plane 
$\bbH=\{\tau\in\bbC\,|\,\mathrm{Im}\,\tau>0\}$; 
\item there is a diagonal rational matrix $\lambda$ such that
\begin{equation}\label{qexp}
e^{-2\pi i\tau\lambda}\bbX(\tau)=\sum_{n=0}^\infty \bbX_ne^{2\pi in\tau}\end{equation}
converges absolutely in $\bbH$, and $\sum_{n=0}^\infty\bbX_nq^n$ is holomorphic at $q=0$; 
\item for all $\left(\begin{smallmatrix}a & b \\ c & d\end{smallmatrix}\right)\in\mathrm{SL}(2,\bbZ)$ and all
$\tau\in\bbH$, 
\begin{equation}\label{eq:functionalequ}
\bbX\left(\frac{a\tau+b}{c\tau+d}\right)=\rho\left(\begin{smallmatrix}a & b \\ c & d\end{smallmatrix}\right)\,
\bbX(\tau)\,;\end{equation}
\item each coefficient $\bbX_n$ takes values in $\bbZ_{\ge 0}^d$, $\lambda_{11}<\lambda_{jj}$ for all $j\ne 1$, and $(\bbX_0)_{1}=1$. Moreover, each component $\bbX_{(j)}(\tau)$ is nonzero and
can be written $\bbX_{(j)}(\tau)=\sum_{n=0}^\infty\bbX'_{n;j}\,\operatorname{ch}_{V(-24\lambda_{11},\lambda_{jj}-\lambda_{11}+n)}(\tau)$ 
where each $\bbX'_{n;j}\in\bbZ_ {\ge0}$.\end{enumerate}\end{Def}

It is common to write  $q^\lambda$ for $e^{2\pi i\tau\lambda}$. 
Note that $e^{2\pi i\lambda}=T$. A  function $\bbX(\tau)$ satisfying (i)-(iii) is called a \textit{weakly holomorphic
vector-valued modular function} for $\rho$ (`weakly holomorphic' means holomorphic in $\bbH$ and meromorphic at all cusps $\bbQ\cup\{i\infty\}$).
A consequence of the fact that the coefficients $\bbX_n$ are  rational, is  that $T$ has finite order (hence that $\lambda$ is rational).
The condition $\bbX'_n\in\bbZ_{\ge 0}^d$ implies $\bbX_n\in\bbZ_{\ge 0}^d$, but in practice 
isn't usually much stronger. We impose the condition $\lambda_{11}<\lambda_{jj}$ here because
we seek a unitary vertex operator algebra; if $\rho$ is modular data with $o\ne 1$ (recall Definition 2.1)
then this condition would become $\lambda_{oo}<\lambda_{jj}$.

Most representations $\rho$ will possess no character vectors; for example it is elementary to verify that it requires the first column of $S$ to be strictly positive, and an old conjecture of Atkin--Swinnerton-Dyer \cite{MR0337781} implies that the existence of a character vector is only possible
when ker$\,\rho$ contains some $\Gamma(N)$.

The modules of a (unitary) strongly-rational vertex operator algebra $\cV$ form a (unitary) modular tensor category \cite{MR2140309},
where `strongly-rational' means regular, simple, equivalent as a $\cV$-module to its contragredient $\cV^\vee$, $\cV_0=\bbC 1$ and $\cV_n=0$ for $n<0$.  The modules are infinite-dimensional,
but
 the operator $L_0$ in $\cV$ acts semi-simply on the modules, and
 the eigenspaces are all finite-dimensional. For each irreducible module $M$ of $\cV$, define the character
 $\chi_M(\tau)=q^{-c/24}\mathrm{tr}_Mq^{L_0}=q^{h_M-c/24}\sum_{n=0}^\infty\mathrm{dim}\,M_{h_M+n}\,q^n$,
 where $M=\coprod_{n=0}^\infty M_{h_M+n}$ and  $M_{h'}$ is the $L_0$-eigenspace with eigenvalue $h'$.
 The numbers $c,h_M$ are called the \textit{central charge} of $\cV$ and the \textit{conformal weight}
 of $M$. Then Zhu \cite{MR1317233} proved that these $\chi_M$ together  form a weakly holomorphic vector-valued
 modular function for some representation $\rho$ of SL$(2,\bbZ)$; this
 representation is given by the modular data of the modular tensor category \cite{1201.6644} (up to a third root
 of unity to be discussed shortly).  One irreducible $\cV$-module will be $\cV$ itself, which we make the
 first module.  The characters of the irreducible modules of a unitary strongly-rational vertex operator algebra, will form a character vector (hence the name).

The modular data  of a modular tensor category determines $T$ up to a third root
of unity.  This ambiguity  means that the
central charge is only determined up to a multiple of 8. In particular, if some vertex operator algebra
realises a modular tensor category, so will infinitely many others; once we've found a character
vector, we've found infinitely many others. For example, tensor arbitrary many copies
of the $E_8$ lattice vertex operator algebra to $\cV$; this doesn't change the category, but
 each copy increases the central charge by 8 and multiplies the character vector by $J(\tau)^{1/3}$.
  
Thus the first step to trying to recover a strongly-rational VOA $\cV=\coprod_{n=0}^\infty\cV_n$ from 
a modular tensor category is to select a possible $c$, and then determine the possible character vectors
$\chi_M(\tau)$. The second step would be to identify the space $\cV_1$. It will be a reductive Lie
algebra, and all homogeneous spaces $M_h$ of all $\cV$-modules
$M$ will be $\cV_1$-modules. The key formula for this purpose is Proposition 4.3.5 of  \cite{MR1317233}, which says 
that for all $u,v\in \cV_1$ and all $\cV$-modules $M$, 
\begin{equation}\sum_{n=0}^\infty \kappa_{M_{h_M+n}}(u,v)\,q^{n+h_M-c/24}=\mathrm{tr}|_{M}o(u[-1]v)\,q^{L_0-c/24}
+\frac{\langle u,v\rangle}{24}E_2(\tau)\,\chi_M(\tau)\,.\end{equation}
Here and elsewhere, $E_n(\tau)$ denotes the weight $n$  Eisenstein series for SL$(2,\bbZ)$,
normalised to have leading term 1.  Also, $\kappa_{M_h}(u,v)=\mathrm{tr}|_{M_h}o(u)u(v)$ is the Killing form of the $\cV_1$-module $M_h$; in particular,
$\kappa_{\cV_1}$ is the Killing form of $\cV_1$ itself. Closely related to the Killing form is the bilinear form $\langle u,v\rangle$, which is always nondegenerate and invariant. The first term on the right side is a vector-valued modular form
of weight 2, for the same multiplier $\rho$. By itself, $\cV_1$ generates a vertex operator subalgebra
of $\cV$, of affine algebra type.
 The {coset} or commutant of $\cV$ by this subalgebra should itself be a strongly-rational vertex operator algebra with small central
 charge and trivial Lie algebra part and explicitly known character vector. Constructing $\cV$
  then largely comes down  to identifying that coset vertex operator algebra. 
  
For both steps 1 and 2, constructing vector-valued modular forms is crucial. In this paper we will restrict our attention to determining the possible character vectors, although the same
 method determines the possible weight-2   forms.
The following treatment is  developed in \cite{MR2412268,vvmf}.

Fix an SL$(2,\bbZ)$-representation $\rho$ with $T$ diagonal and of finite order. Let $\cM^!(\rho)$
denote the space of all weakly holomorphic vector-valued modular functions for $\rho$. Let
$J(\tau)=q^{-1}+744+196884q+\cdots$ denote the Hauptmodul for SL$(2,\bbZ)$. In particular, $\cM^!(1)=
\bbC[J(\tau)]$. Note that   $\cM^!(\rho)$ is a module for the ring $\bbC[J(\tau)]$. Note that if $\rho'=
Q\rho Q^{-1}$, then $\bbX(\tau)\in\cM^!(\rho)$ iff $Q\bbX(\tau)\in\cM^!(\rho')$.

A simple observation: if $\rho$ is an \textit{odd} SL$(2,\bbZ)$-irrep, then  $\cM^!(\rho)=0$. This is because
\eqref{eq:functionalequ} applied to $\begin{psmallmatrix}a & b \\ c & d\end{psmallmatrix}=\begin{psmallmatrix}-1 & 0 \\ 0 & -1\end{psmallmatrix}$ gives
$\bbX(\tau)=-\bbX(\tau)$. For this reason, in the following we'll restrict (without loss of generality) to even representations $\rho$ by first projecting
away any odd summands. Conveniently, the modular data we obtain from the extended Haagerup subfactor is already even.

The first fact is that $\cM^!(\rho)$ is a  free module of rank $d$ over $\bbC[J(\tau)]$. Given $d$ generators, it is
convenient to collect them together as columns of a $d$-by-$ d$ matrix we'll call $\Xi(\tau)$; then there is
a bijection between $\bbX(\tau)\in\cM^!(\rho)$ and vectors $\bbY(\tau)\in\bbC^d[J(\tau)]$ given by $\bbX=\Xi\bbY$. 
We can choose the  generators (hence $\Xi$) in such a way that there is a diagonal matrix $\Lambda$
such that
\begin{equation}
\label{eq:exponent}
\Xi(\tau)=q^\Lambda\left(I+\sum_{n=1}^\infty \Xi_nq^n\right).
\end{equation}
Identifying any such $\Xi$  is equivalent to identifying the full space $\cM^!(\rho)$.

A word of warning: the convention of \eqref{eq:exponent} differs from that of \cite{MR2412268,MR2837122} which used $\Xi=q^\Lambda(Iq^{-1}+\chi+...)$, but is the same as in \cite{vvmf}. This notation change cleans up the formulas a little.
It is not completely trivial that generators can be chosen so that \eqref{eq:exponent} holds, but indeed it is true for all
$\bbC[J]$-submodules $M$ of $\cM^!(\rho)$ of full rank.  Once one has $d$ vector-valued modular forms $\bbX^{(i)}(\tau)$
 in $M$ 
forming a matrix $\Xi$ of shape \eqref{eq:exponent} for some $\Lambda$, it is then elementary to find algorithmically
$d$ free generators for $M$ with shape \eqref{eq:exponent} (for a larger $\Lambda$) as desired. The (nonconstructive) existence of such  $\bbX^{(i)}(\tau)$ is an immediate consequence of Theorem 3.1 of \cite{vvmf}. 

The second fact is that $\Xi(\tau)$ is the solution to a first-order Fuchsian differential equation. The reason
is that $\frac{E_{10}(\tau)\,d}{\Delta(\tau)\,d\tau}$ is a differential operator on $\cM^!(\rho)$, and so applied to each of the
free generators (i.e. columns of $\Xi$) gives a vector-valued modular form which lies in the $\bbC[J(\tau)]$-span of the
generators.
That differential equation implies the
recursion
\begin{equation}\label{eq:recurs}
[\Lambda,{\Xi}_{n}]+n{\Xi}_{n}=\sum_{l=0}^{n-1}{\Xi}_{l}
\left(f_{n-l}\Lambda+g_{n-l}(\Xi_1+[\Lambda,\Xi_1])\right)
\end{equation}
for $n\ge 2$, where 
we write  $(J(\tau)-984)\Delta(\tau)/E_{10}(\tau)=\sum_{n=0}^\infty f_{n}q^n=1+0q+338328 q^2 +\cdots$ and
$\Delta(\tau)/E_{10}(\tau)=\sum_{n=0}^\infty g_{n}q^n=q+240 q^2 + 199044 q^3 + \cdots$. Here,
$\Delta=\eta^{24}$ where $\eta$ is the Dedekind eta.
We require ${\Xi}_{0}=I$. Note that the $ij$-entry on the left-side of \eqref{eq:recurs} is
$\left(\Lambda_{ii}-\Lambda_{jj}+n\right){\Xi}_{n\, ij}$, so  \eqref{eq:recurs} allows us to recursively
identify all  entries of ${\Xi}_{n}$, at least when all 
$|\Lambda_{jj}-\Lambda_{ii}|\ne n$.   Indeed, it can be shown that
 $\Lambda_{jj}-\Lambda_{ii}$ can never lie in
$\bbZ_{\ge 2}$.

This recursion means  that $\cM^!(\rho)$ is completely identified, i.e. $\Xi(\tau)$ is determined,
once  the matrices $\Lambda$ and $\Xi_1$ are given. The matrices $\Xi_1$ and $\Lambda$ are heavily constrained. In particular, $\Lambda$ is diagonal, satisfying $e^{2\pi i \Lambda}=T$ as well as 
\begin{equation}\label{trace}\mathrm{Tr}\,\Lambda=-\frac{7d}{12}+\frac{1}{4}\mathrm{Tr}\,{S}+
\frac{2}{3\sqrt{3}}\mathrm{Re}\left(e^{\frac{-\pi i}{6}}
\mathrm{Tr}\,{ST^{-1}}\right)\,.\end{equation}
When $\rho$ is  irreducible and $d<6$, then any diagonal matrix satisfying both $e^{2\pi i\Lambda}=T$ and 
\eqref{trace} will work, but in general these conditions won't always suffice.

Any $\Xi(\tau) \in \cM^!(\rho)$ whose components are linearly independent over $\bbC$ gives us all of $\cM^!(\rho)$ via \cite[Proposition 3.2]{vvmf}: $$\cM^!(\rho) = \bbC[J(\tau), \nabla_{1}, \nabla_{2}, \nabla_{3}],$$
where
\begin{align*}
\nabla_{1} & = \frac{E_4 E_6}{\Delta} q \frac{d}{dq} \\ 
\nabla_{2} & = \frac{E_4^2}{\Delta} \left(q \frac{d}{dq}- \frac{E_2}{6} \right) q \frac{d}{dq}\\ 
\nabla_{3} & = \frac{E_6}{\Delta} \left(q \frac{d}{dq} - \frac{E_2}{3} \right) \left(q \frac{d}{dq} - \frac{E_2}{6} \right) q \frac{d}{dq} \\ 
\end{align*} 
The building blocks of all of these differential operators is the operator $q \frac{d}{dq}-\frac{k}{12}E_2$,
which sends weight $k$ modular forms to weight $k+2$ ones.

We may take $\Lambda_{\rho_1 \oplus \rho_2} = \Lambda_{\rho_1} \oplus \Lambda_{\rho_2}$ and $(\Xi_{\rho_1 \oplus \rho_2})_{1} = (\Xi_{\rho_1})_{1} \oplus (\Xi_{\rho_2})_{1}$. Moreover, $\Lambda$ and $\Xi$ for the weakly-holomorphic vector-valued modular forms at weight 2 for the contragredient representation $\rho$, is $-I-\Lambda$ and
$E_4(\tau)^2E_6(\tau)\Delta(\tau)^{-1}(\Xi(\tau)^\transpose)^{-1}$. (Definition 10.1 can be extended to forms of arbitrary even weight in 
the obvious way; Proposition 4.1 of \cite{vvmf} tells how to convert $\Xi(\tau)$'s for different weights but the same $\rho$.)

We can find $\Xi_{\rho_1 \otimes \rho_2}(\tau)$ from $\Xi_{\rho_1}(\tau)$ and $\Xi_{\rho_1}(\tau)$ using the fact  that
$$\cM^!(\rho_1 \otimes \rho_2) = \bbC[J(\tau), \nabla_{1}, \nabla_{2}, \nabla_{3}] (\cM^!(\rho_1) \otimes \cM^!(\rho_2)).$$
Suppose $\cM^!(\rho_1)$ is free of rank $d_1$ over $\bbC[J(\tau)]$, and $\cM^!(\rho_2)$ is free of rank $d_2$ over $\bbC[J(\tau)]$.
Starting with the matrix
$\tilde{\Xi} = \Xi_{\rho_1}(\tau) \otimes \Xi_{\rho_2}(\tau) \in M_{d_1d_2 \times d_1d_2}(\bbC((q)))$,
we form the $d_1d_2 \times 4d_1d_2$ matrix 
$\begin{psmallmatrix}\tilde{\Xi} & \nabla_{1} \tilde{\Xi} & \nabla_{2} \tilde{\Xi} & \nabla_{3} \tilde{\Xi}\end{psmallmatrix}$
and then find a $\bbC[J(\tau)]$ basis for the columns.
%
Replacing $\tilde{\Xi}$ with these new basis vectors as columns, we repeat until $\tilde{\Xi}$ stabilises.
This does not quite provide our $\Xi_{\rho_1 \otimes \rho_2}(\tau)$, as we still need to perform a change of basis so that Equation \eqref{eq:exponent} holds.


In the case of any irrep $\rho$ with kernel containing  $\Gamma(N)$ for some $N=\prod_pp^{\nu_p}$, we write $\rho\cong \oplus_i\rho_i$
and $\rho_i\cong \otimes_p\rho_{i;p}$ as before. It then suffices to know 
the $\Lambda$ and $\chi$ for each irrep $\rho_{i;p}$ appearing in that decomposition. Each such
$\rho_{i;p}$ is an irrep in some Weil representation associated to lattices, and so some $\bbX(\tau)\in
\cM^!(\rho_{i;p})$ with linearly independent components can be built up from lattice theta functions.
For `small' powers $p^\nu$, $\Lambda,\Xi_1$ have been computed for every irrep $\rho$ of SL$(2,\bbZ_{p^\nu})$, by Timothy Graves in his PhD thesis.
This means that the full space $\cM^!(\rho)$ can be determined fairly quickly from his tables for any representation of
SL$(2,\bbZ_N)$, provided the prime powers dividing $N$ are not too large ($<32$).

A minor technicality: it is possible for the tensor product $ \otimes_p\rho_{i;p}$ to be even, even though some (necessarily 
an even number of) $\rho_{i;p}$ 
may be odd. One way to handle this is to replace any such odd factor $\rho_{i;p}$ with the even irrep $\rho_2^{(2)}\otimes
\rho_{i;p}$, as an even number of $\rho_2^{(2)}$'s tensor to 1. For the modular data associated to the extended Haagerup subfactor, all components $\rho_{i;p}$ which arise are even.

These calculations can be a little delicate. We suggest two strong consistency checks. First,
\begin{eqnarray}
\cA_2\left(\cA_2-\frac{1}{2}I\right)&=0\,,\label{A2rel}\\
\cA_3\left(\cA_3-\frac{1}{3}I\right)\left(\cA_3-\frac{2}{3}I\right)&=0\,,\label{A3rel}
\end{eqnarray}
where
\begin{equation}
\cA_2=-\frac{31}{72}\Lambda-\frac{1}{1728}(\Xi_1+[\Lambda,\Xi_1])\,,\ \
\cA_3=-\frac{41}{72}\Lambda+\frac{1}{1728}(\Xi_1+[\Lambda,\Xi_1])\,.\label{A2A3}
\end{equation}
Given $\Lambda$ and $\Xi_1$, construct $\Xi(\tau)$ through \eqref{eq:recurs}; then the columns of
$\Xi(\tau)$  will freely generate the $\bbC[J]$-module $\cM^!(\rho)$ for some SL$(2,\bbZ)$-representation $\rho$,
iff the corresponding $\cA_2,\cA_3$ satisfy \eqref{A2rel},\eqref{A3rel}. Incidentally, $e^{2\pi i \cA_2}$ is similar to
$S$ and $e^{2\pi i \cA_3}$ is similar to $\rho\begin{psmallmatrix}-1&1\\ -1&0\end{psmallmatrix}=TS$.
This representation $\rho$ is, as always, uniquely
determined by its values on $\begin{psmallmatrix}1 & 1 \\ 0 & 1 \end{psmallmatrix}$ (which is $T=e^{2\pi i\Lambda}$) and $
\begin{psmallmatrix}0 & -1 \\ 1 & 0 \end{psmallmatrix}$ (which is $S$). The $S$-matrix can be estimated numerically by using the recursion \eqref{eq:recurs}
to compute the first few terms of the series expansion of $\Xi(\tau)$; then $\Xi(\tau)$ is invertible anywhere in $\bbH$
except at the countably many elliptic fixed points SL$(2,\bbZ).i\cup\mathrm{SL}(2,\bbZ).\xi_3$, so as
long as we avoid those elliptic points we can estimate $S=\Xi(-1/\tau)\,\Xi(\tau)^{-1}$. We have applied both tests
to all $\Lambda,\Xi_1$ given below.

Since the central charge $c$ is determined only up to mod 8 by the modular tensor category, there always
are three SL$(2,\bbZ)$-representations which have to be considered, namely $\rho,\rho_2^{(3)}\otimes\rho$, and
$\rho_3^{(3)}\otimes\rho$, where $\rho^{(3)}_2,\rho^{(3)}_3$ are described in Section \ref{sec:congruence-reps}. There is no straightforward
relation between the matrices $\Xi(\tau)$ for these three representations. However, Proposition 4.1(2) of
\cite{vvmf} gives a short-cut. Suppose we know $\Xi(\tau)$ for $\rho$; then the columns for $\Xi(\tau)$ for $\rho_2^{
(3)}\otimes\rho$ will be linear combinations over $\bbC$ of the columns of $E_4(\tau)\eta(\tau)^{-8}\Xi(\tau)
=q^\Lambda(I_d+\cdots)$ and of
$$\eta(\tau)^{-8}\left(E_4(\tau)\Xi(\tau)\Lambda\left(\Lambda-\frac{1}{6}I_d\right)-\left(q\frac{d}{dq}-\frac{1}
{6}E_2 (\tau)\right)q\frac {d} {dq}\Xi(\tau)\right)=q^\Lambda\left(1728\cA_3\left(\cA_3-\frac{1}
{3}I_d\right)q+\cdots\right)\,.$$
Now $\operatorname{rank}(\cA_3(\cA_3-\frac{1}{3}I_d))$ equals the multiplicity of $\xi_{3}^2$ as an eigenvalue of $TS$,
and
this is
the number of vectors that should be chosen from the latter. This method applied to $\rho_2^{(3)}\otimes\rho$ gives $\Xi(\tau)$ for $\rho_3^{(3)}\otimes\rho$.

Obtaining the possible character vectors is now easy combinatorics. Suppose $\bbX(\tau)$ is a character vector,
and write $\bbX(\tau)=\Xi(\tau)\,\bbY(J(\tau))$ for some vector-valued polynomial $\bbY(J)\in\bbC^d[J]$. Write
$d_j$ for the degree of the component $\bbY_j$, and $d_M$ for the maximum of all $d_j$. Then $d_M\le
c/24+\mathrm{max}_j\Lambda_{jj}$. More precisely, if $d_1=d_M$, then $d_1=c/24+\Lambda_{11}$ and $\bbY_1$ is monic. If $d_j=d_M$ and $j\ne 1$, then $d_j<c/24 +\Lambda_{jj}$ and the leading coefficient of $\bbY_j$
must be a positive integer. 

Given some $\bbX(\tau)\in\cM^!(\rho)$, write $\bbX(\tau)=q^\lambda\sum_{n=0}^\infty \bbX_nq^n$ where
each entry of $\bbX_0$ is nonzero. 
To prove a candidate $\bbX(\tau)$ is indeed a character vector, we need to prove each $\bbX_n\in\bbZ^d$,
and that each $\bbX_n\in \bbR^d_{\ge 0}$. The first statement is accomplished by:

\begin{lem}
\label{lem:integrality} 
Suppose $f(\tau)=q^\lambda\sum_{n=0}^\infty f_nq^n$ is a (scalar-valued) weakly holomorphic modular function 
for some subgroup $\Gamma$ of SL$(2,\bbZ)$, possibly with multiplier $\mu:
\Gamma\rightarrow\bbC^\times$. 
Suppose $\Gamma$ has index $m$ in SL$(2,\bbZ)$, and contains a congruence subgroup.
Choose $k\in\bbZ_{\ge0}$ so that $\lambda\ge -k/24$ for all $j$. Suppose the Fourier coefficients 
$f_n$ are integral for all $n\le km/24$. Then $f_n\in\bbZ$ for all $n$.\end{lem}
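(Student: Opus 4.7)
The plan is to convert $f$ into an honest holomorphic modular form on a congruence subgroup and then invoke Sturm's bound on integrality. The first step is to multiply $f$ by a suitable power of $\Delta=\eta^{24}$ to clear poles. Since $\Delta$ has a simple zero at the cusp $\infty$ (hence a zero of order equal to the width at every cusp of $\Gamma$) and has integer Fourier coefficients, setting $g=f\Delta^{j}$ with $j=\lceil k/24\rceil$ ensures $g$ is holomorphic at $\infty$, via the bound $\lambda+j\ge 0$. After possibly enlarging $j$ by a bounded amount to absorb $f$'s poles at the other finitely many cusps, $g$ is a holomorphic modular form of weight $12j$ on $\Gamma$ with the same multiplier $\mu$. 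Integrality of the first $km/24$ coefficients of $f$, together with integrality of the coefficients of $\Delta^{j}$, forces integrality of the first $km/24$ coefficients of $g$ at $\infty$.

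To reduce to the classical Sturm setup, I would then handle the multiplier $\mu$. Because $\Gamma$ contains a congruence subgroup, $\mu$ factors through a finite quotient of $\Gamma$ and hence has finite order; passing to its kernel $\Gamma'=\ker\mu$, a finite-index subgroup of $\Gamma$ still containing a congruence subgroup, trivializes $\mu$. Alternatively, one can use a version of Sturm's theorem adapted to congruence subgroups with finite-order multipliers.

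Finally I would apply Sturm's theorem: a holomorphic modular form of weight $w$ on a congruence subgroup of index $m'$ in SL$(2,\bbZ)$ with bounded denominators is determined modulo any prime $p$ by its first $\lfloor wm'/12\rfloor$ Fourier coefficients, so $p$-integrality of those coefficients forces $p$-integrality of all of them. Applied to $g$ of weight $12j\approx k/2$ on $\Gamma$ of index $m$, Sturm's threshold is $jm\approx km/24$, exactly matching the hypothesis. Taking the conjunction over all primes $p$ forces every Fourier coefficient of $g$ to lie in $\bbZ$. Since $\Delta^{-1}=q^{-1}\prod_{n\ge 1}(1-q^n)^{-24}$ also has integer Fourier coefficients, dividing back recovers $f=g\Delta^{-j}$ with integer Fourier coefficients, as desired.

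The main obstacle is the bookkeeping. Matching the lemma's threshold $km/24$ to the standard Sturm bound $wm'/12$ requires $j$ to be essentially $k/24$ on the nose, so the ceiling loss together with the extra power of $\Delta$ needed to kill poles away from $\infty$ must be absorbed carefully. The multiplier $\mu$ is a second source of tightness, since passing to $\Gamma'$ may increase the index and worsen Sturm's threshold. A cleaner alternative, which sidesteps both issues, is to multiply instead by $\eta^{2k}$: this has weight $k$, and its order $k/24$ at $\infty$ matches the hypothesis on $\lambda$ exactly, while its multiplier has order dividing $24$ and is easily handled.
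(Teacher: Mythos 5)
Your strategy — clear the pole at $\infty$ by multiplying by a power of $\eta$ (or $\Delta$) and then invoke a Sturm-type integrality bound — is essentially the paper's strategy; the paper's entire proof is ``This is Lemma 3(b) of \cite{1211.5531}, applied to $\eta(\tau)^k f(\tau)$''. But there are two problems with the proposal as written.

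First, your ``cleaner alternative'' has an arithmetic error that destroys exactly the tightness you are worried about. You propose multiplying by $\eta^{2k}$ and claim it has weight $k$ and order $k/24$ at $\infty$. In fact $\eta^{2k}$ has weight $k$ but leading exponent $2k/24 = k/12$; the factor with leading exponent $k/24$ is $\eta^{k}$, which has weight $k/2$. The weight is precisely what feeds into Sturm's bound. With weight $k$ the threshold is $km/12$, twice the lemma's $km/24$, so the argument does not close. With $\eta^{k}$ (as the paper uses) the weight is $k/2$, the order at $\infty$ is $k/24$ matching the hypothesis on $\lambda$, and the Sturm threshold $(k/2)\,m/12 = km/24$ matches exactly. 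You diagnosed correctly that the $\Delta^{\lceil k/24\rceil}$ variant doesn't close, but your fix was off by a factor of two.

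Second, neither the $\Delta^j$ route nor the $\eta^k$ route, as you present them, handles the possibility that $f$ (hence $\eta^k f$) has poles at cusps other than $\infty$; the lemma's hypothesis only bounds $\lambda$, the exponent at $\infty$. Sturm's theorem as you invoke it requires holomorphy at all cusps of $\Gamma$. You waved at ``enlarging $j$ to absorb $f$'s poles at the other cusps,'' but then correctly observed this spoils the bookkeeping. This is a genuine gap in the blind proof: to make the $\eta^k$ argument work one needs either a version of the integrality lemma formulated for weakly holomorphic forms that are only required to be holomorphic at $\infty$ (which is what Lemma 3(b) of \cite{1211.5531} provides and the paper leans on), or one needs to know \emph{a priori} that the pole orders at the other cusps are controlled. (In the paper's actual application the latter is automatic, since each $f$ arises as a component of a vector-valued modular function for a representation of $\mathrm{SL}(2,\bbZ)$, so its expansion at any cusp is a linear combination of the components at $\infty$ with the same bound $k/24$; but the lemma is stated for arbitrary scalar $f$, so the cited technical input is doing real work.) Finally, the multiplier issue you flag is also absorbed by that citation rather than by passing to $\ker\mu$, which as you note would worsen the index and hence the threshold.
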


This is Lemma 3(b) of \cite{1211.5531}, applied to $\eta(\tau)^kf(\tau)$. A useful fact is that the index of
$\Gamma(N)$ in SL$(2,\bbZ)$ is $N^3\prod_{p|N}
(1-p^{-2})$. We apply the Lemma by taking $f(\tau)$ to be any component $\bbX_j(\tau)$ of our vector-valued
modular function $\bbX$, so $\lambda=\lambda_j$ and $\Gamma$ is the projective kernel of
the multiplier $\rho$.

Positivity is more delicate, and again follows the methods of \cite{1211.5531}. The general argument will be developed
elsewhere, and here we will limit the discussion to the following. Assume $\rho$ is a unitary SL$(2,\bbZ)$-representation, and that $S_{1j}>0$ for all $j$. (This is true for the modular data of any unitary modular tensor category.) Assume also that $\lambda_{11}<\lambda_{jj}$
for all $j\ne 1$ (this is true for any character vector). Then for large $n$, the Rademacher expansion for $\bbX(\tau)$
implies
$$(\bbX_n)_j\sim S_{1j}(\bbX_0)_1\frac{e^{4\pi\sqrt{n|\lambda_{11}|}}}{\sqrt{2}n^{3/4}}\,.$$
Hence for all sufficiently large $n$, all coefficients $\bbX_n$ will be positive, provided $\bbX_0\in\bbR^d_{>0}$.
To prove a given $\bbX$ is truly a character vector, we would need to make this estimate effective. This can
be quite involved, and will be treated in generality in future work following the positivity method developed
in \cite{1211.5531}. We will not address this further in this paper. 

\subsection{Specialisation to the double of the even part of EH}
\label{sec:EH-characters}

For the modular data of the extended Haagerup, the central charge $c$ will be a multiple of 8 (a
positive multiple, if we insist, as we will, that the hypothetical vertex operator algebra be unitary).
The corresponding conductor $N$ will be $N=5\cdot 13$ if $24|c$ or $N=3\cdot 5\cdot 13$ otherwise.
We have the decompositions:

$\rho\cong \rho_{14}^{(13)} \oplus \rho_8^{(5)}\oplus   \rho_1^{(1)}\oplus   \rho_1^{(1)}\oplus  \rho_1^{(1)}$ if
$c\equiv 0$ (mod 24);

$\rho\cong \left(\rho_2^{(3)}\otimes\rho_{14}^{(13)}\right) \oplus \left( \rho_2^{(3)}\otimes \rho_8^{(5)}\right)\oplus
\rho_2^{(3)}\oplus   \rho_2^{(3)}\oplus  \rho_2^{
(3)}$ if
$c\equiv 8$ (mod 24);

$\rho\cong  \left(\rho_3^{(3)}\otimes\rho_{14}^{(13)}\right) \oplus
\left(\rho_3^{(3)}\otimes\rho_8^{(5)}\right)\oplus \rho_3^{(3)}\oplus   \rho_3^{(3)}\oplus  \rho_3^{(3)} $ if
$c\equiv 16$ (mod 24).

Moreover, 
$$\Lambda(\rho_1^{(1)})=\Xi_1(\rho_1^{(1)})=(0)\,;$$

$$\Lambda(\rho_2^{(3)})=(-1/3)\,, \ \Xi_1(\rho_2^{(3)})=(248)\,;$$

$$\Lambda(\rho_3^{(3)})=(-2/3)\,, \ \Xi_1(\rho_3^{(3)})=(496)\,;$$

$$\Lambda(\rho_8^{(5)})=\mathrm{diag}\left(0,-\frac{4}{5},-\frac{3}{5},-\frac{2}{5},-\frac{6}{5}\right)\,,\ \Xi_1(\rho_8^{(5)})=\left(\begin{matrix} 25& -57750&-11550&-1350&-819000\cr-3/2&-39&-126&-7&468\cr-5/3&-1050&248&-9&1950\cr5&1650&264&282&-28600\cr -1/6&-7&-4&-3&-12\end{matrix}\right)$$

$$\rho_2^{(3)}\otimes\rho_8^{(5)}:\ \Lambda=\mathrm{diag}\left(-\frac{1}{3},-\frac{2}{15},-\frac{14}{15},-\frac{11}{15},-\frac{8}{15}\right)\,,\ \Xi_1=\left(\begin{matrix}-52&-30&-22050&-6600&-1680\cr-100&0&-39200&3850&1728\cr-5&-4&56&11&16\cr-10&2&84&220&-108\cr-25&8&1200&-1100&32\end{matrix}\right);$$

$$\rho_3^{(3)}\otimes\rho_8^{(5)}:\ \Lambda=\mathrm{diag}\left(-\frac{2}{3},-\frac{7}{15},-\frac{4}{15},-\frac{16}{15},\frac{2}{15}\right)\,,\ \Xi_1=\left(\begin{matrix}-29&-294&-60&-2640&3\cr -375&56&-50&3300&-1\cr-2025/2&-686&82&4312&-1/2\cr-25/2&14&2&-104&-1/2\cr-6125&2401&100&-411600&3
\end{matrix}\right)$$

$$\Lambda(\rho_{14}^{(13)})=\mathrm{diag}\left(0,-\frac{12}{13},-\frac{11}{13},-\frac{10}{13},-\frac{9}{13},-\frac{8}{13},-\frac{7}{13},-\frac{6}{13},-\frac{5}{13},-\frac{4}{13},-\frac{3}{13},-\frac{15}{13},-\frac{14}{13},0\right)\,,$$
$$ \Xi_1(\rho_{14}^{(13)})={\left(\begin{smallmatrix}   61/3&16731&-36374& 20748& 8281& 1703&-2145&962&169&-117&39&-276822&45474&-29/3\cr-2/3&-4&35&-12& 21& -30& 0& 20&-7& 0&-5&-33&44&4/3\cr 1&103&42&35&-14&45&-22&3&6&15&-10&-84&-66&-1\cr 5&-1&168&-96&-182&-27&0&-34&21&27&-8&-512&-330&-3\cr 13/3&203&70&-330&-18&161&78&14& -14& 0&-14& -924&-286&-8/3\cr -1&-805&585&-308&365&231&-55&-30&21&11&10&462&66&1\cr 5&1000&-1230&-693&168&-75&246&-60&48&54&-10&-1836&-3732&-4\cr -14/3&3276&721&0&637&-210&-99& 200&42&0&39&9702&-3432&4/3\cr -4&278&522&1034&-658&324&154&114&162&-88&-2&14124&-9186&1\cr 1&3093&4299&1260&-1729&633&468&150&-189&21&39&-1554&-5946&-1\cr -14/3&-11375&-8980&1617&-1078&770&0&440&-154&99&-4&25872&-5929&7/3\cr-2/3&-5&-10&6&8&-2&-6&5&0&-3&2&-12&-5&1/3\cr -1&-13&-9&0&14&-9&-13&6&-6&-5&1&-6&-18&1\cr 29/3&62985&-26962&15288&-3094&6175&858&-494&650&117&39&-204516&172458&-7/3 \end{smallmatrix}\right)}$$
  
$$\Lambda(\rho_2^{(3)}\otimes\rho_{14}^{(13)})=\mathrm{diag}\left(-\frac{1}{3}, -\frac{10}{39}, -\frac{7}{39}, -\frac{4}{39}, -\frac{40}{39}, -\frac{37}{39}, -\frac{34}{39}, -\frac{31}{39}, -\frac{28}{39}, -\frac{25}{39}, -\frac{22}{39}, -\frac{19}{39}, -\frac{16}{39}, -\frac{1}{3}\right)\,,$$
$$\Xi_1(\rho_2^{(3)}\otimes\rho_{14}^{(13)})={\left(\begin{smallmatrix}  -18&15&-24&5&11960&2854&-5083&3212&685&-1190&924&-536&62&78\cr -27&20&16&-2&6320&-10695&4420&5533&150&645&-880&-258&300&101\cr
-98&50&6&4&1810&17990&-13770&3010&842&2110&-1540&-284&-175&-34\cr
112&-16&15&-2&-56192&-21651&-15912&-649&3038&1473&616&-1082&-420&50\cr
4&0&1&-2&0&10&0&7&-2&-5&0&-8&-4&-1\cr
2&-5&4&-3&40&42&-17&-12&13&10&11&-4&-2&6\cr
-7&4&-4&-2&48&-35&68&-11&22&29&0&-13&-20&5\cr
14&10&2&-1&195&-90&-34&118&31&30&44&36&-15&-10\cr
-7&0&1&2&-320&189&136&77&194&-135&-56&54&-28&18\cr
-28&5&6&3&-1040&476&442&198&-337&40&154&4&-14&20\cr
21&-20&-11&2&-880&761&52&638&-342&375&-88&75&-12&11\cr
-38&-14&-6&-3&-3404&514&-782&1386&733&-18&176&64&80&-10\cr
-19&40&-8&-4&-2720&271&-3536&-1122&-1076&-255&-88&194&56&60\cr
56&60&-18&7&-4160&10688&1768&-1166&2675&440&693&-376&274&20 \end{smallmatrix}\right)}$$

$$\Lambda(\rho_3^{(3)}\otimes\rho_{14}^{(13)})=\mathrm{diag}\left(-\frac{2}{3}, -\frac{23}{39}, -\frac{20}{39}, -\frac{17}{39}, -\frac{14}{39}, -\frac{11}{39}, -\frac{8}{39}, -\frac{5}{39}, -\frac{2}{39}, -\frac{38}{39}, -\frac{35}{39}, -\frac{32}{39}, -\frac{29}{39}, -\frac{2}{3}\right)\,,$$
$$\Xi_1(\rho_3^{(3)}\otimes\rho_{14}^{(13)})={\left(\begin{smallmatrix}  -39&40&-128&72&35&5&-10&4&1&-760&749&-564&100&155\cr
-78&80&98&-14&14&-20&7&6&0&532&-924&-351&546&260\cr
-312&234&64&65&0&30&-16&3&0&2184&-2002&-480&-390&-104\cr
455&-92&160&-68&-98&-28&-16&0&1&1862&980&-2240&-1160&182\cr
975&235&16&-254&21&60&10&6&-1&-6840&-2058&-2700&-750&-715\cr
-403&-884&523&-208&169&76&-9&-4&1&6916&3796&962&156&1443\cr
-2119&832&-896&-351&91&-28&30&-2&1&13832&728&-3108&-5174&1547\cr
3042&2392&460&-26&182&-44&-8&10&1&15067&16380&12960&-3770&-2197\cr
-1170&169&280&351&-182&57&20&5&2&-55328&-20475&15560&-8385&4342\cr
-13&2&8&5&-7&3&2&1&-1&0&21&4&-9&13\cr
13&-24&-19&6&-7&4&1&2&-1&76&-28&30&-10&13\cr
-39&-26&-16&-26&-13&2&-2&3&1&0&78&28&65&-13\cr
-26&92&-28&-34&-7&0&-8&-2&-1&-133&-56&160&58&91\cr
118&179&-88&57&-14&25&4&-1&2&304&574&-408&389&38\end{smallmatrix}\right)}$$
  
In all three cases (namely, $c\equiv 8k$ (mod 24) for $k=0,1,2$), the full 22-by-22 matrices $\Lambda$ and $\Xi_1$ are obtained by
\begin{align*}
\Lambda& =Q^{-1}\left(\Lambda(\rho_{13})\oplus\Lambda(\rho_5)\oplus\Lambda(\rho_1)\oplus\Lambda(\rho_1)\oplus\Lambda(\rho_1)
\right)Q \\
\intertext{ and }
\Xi_1& =Q^{-1}\left(\Xi_1(\rho_{13})\oplus\Xi_1(\rho_5)\oplus\Xi_1(\rho_1)\oplus\Xi_1(\rho_1)\oplus\Xi_1(\rho_1)
\right)Q,
\end{align*}
for $Q$ explicitly given in Section \ref{sec:endgame}, and where $\rho_1=\rho_2^{(3)\,\otimes k}$, $\rho_5=\rho_2^{(3)\,\otimes k}\otimes \rho_8^{(5)}$, and $\rho_{13}=\rho_2^{(3)\,\otimes k}\otimes \rho_{14}^{(13)}$.

Let us explain how we found these matrices $\Lambda$ and $\Xi_1$. Consider first the $A_4$ root lattice
and its dual $A_4^*$ (we use the standard lattice notation and terminology explained in e.g. \cite{MR1662447}). The group $A_4^*/A_4$ has 5 elements, and these have theta series 
$\theta_{[0]}(\tau)=1+40q+\cdots$, $\theta_{[1]}(\tau)=\theta_{[4]}(\tau)=q^{2/5}(5+30q+\cdots)$
and $\theta_{[2]}(\tau)=\theta_{[3]}(\tau)=q^{3/5}(10+25q+\cdots)$, where $\theta_{[i]}=\theta_{[5-i]}$ follows
because a coset and its negative always have identical theta series. These 3 functions form
the components of a vector-valued modular form of weight 2 for SL$(2,\bbZ)$, for a multiplier equivalent to $\rho_5^{(5)}$. The products $\theta_{[i]}(\tau)\theta_{[j]}(\tau)$ will form a vector-valued modular
form of weight 4 for SL$(2,\bbZ)$, for a multiplier equivalent to the symmetric square of $\rho_5^{(5)}$.
That symmetric square is isomorphic to $1\oplus\rho_8^{(5)}$. We can make them weight 0 by dividing by
$\eta(\tau)^8$, but this tensors the multiplier by $\rho^{(3)}_2$. In particular, the first column of the
matrix $\Xi(\tau)$ for $\rho^{(3)}_2\otimes\rho_8^{(5)}$ has components $(\theta_{[0]}^2-2\theta_{[1]}
\theta_{[2]})\eta^{-8}$, $-\theta_{[2]}^2\eta^{-8}$, $-\theta_{[0]}\theta_{[1]}\eta^{-8}$, $-\theta_{[0]}\theta_{[2]}
\eta^{-8}$, $-\theta_{[1]}^2\eta^{-8}$. This generates the full module $\cM^!(\rho^{(3)}_2\otimes\rho_8^{(5)})$, using the differential operators $\nabla_i$ and $\bbC[J]$. 

A similar method works to find  $\Lambda,\Xi_1$ for $\rho^{(13)}_{14}$. For this let the lattice
be $L=A_{3}52[1,\frac{1}{4}]$, which means $\cup_{i=0}^3L_0+([i],\frac{i}{4})$ for the orthogonal
direct sum $L_0=A_3\oplus
\sqrt{52}\bbZ$. Then $L^*/L$ has 13 elements, with theta functions $\psi_{[0]}(\tau)=1q^0+\cdots$,
$\psi_{[1]}(\tau)=\psi_{[12]}(\tau)=1q^{2/13}+\cdots$, $\psi_{[2]}(\tau)=\psi_{[11]}(\tau)=5q^{8/13}+\cdots$, 
$\psi_{[3]}(\tau)=\psi_{[10]}(\tau)=4q^{5/13}+\cdots$, $\psi_{[4]}(\tau)=\psi_{[9]}(\tau)=4q^{6/13}+\cdots$, 
$\psi_{[5]}(\tau)=\psi_{[8]}(\tau)=10q^{11/13}+\cdots$, $\psi_{[6]}(\tau)=\psi_{[7]}(\tau)=6q^{7/13}+\cdots$.
These $\psi_{[i]}$ form a vector-valued modular form of weight 2 for SL$(2,\bbZ)$ with multiplier $\rho^{(13)}_5$,
so the products $\psi_{[i]}\psi_{[j]}$, $i\le j$, form one of weight 4 whose multiplier is the symmetric square
of $\rho^{(13)}_5$, namely $1\oplus\rho^{(13)}_{12}\oplus\rho^{(13)}_{14}$. Then the
third column of $\Xi(\rho^{(3)}_2\otimes \rho^{(13)}_{14})$ is the vector-valued modular form with
components $2\psi_{1,5}-\psi_{2,3}-\psi_{4,6}$, $\psi_{6,6}-\psi_{2,4}$, $\psi_{0,1}-\psi_{2,6}$, $\psi_{3,5}-\psi_{2,2}$, $\psi_{1,1}-\psi_{4,5}$, $\psi_{0,3}-\psi_{5,6}$, $\psi_{2,5}-\psi_{0,4}$, $\psi_{0,6}-\psi_{1,3}$, 
$\psi_{0,2}-\psi_{1,4}$, $\psi_{1,6}-\psi_{5,5}$, $\psi_{1,2}-\psi_{3,3}$, $\psi_{0,5}-\psi_{3,4}$, $\psi_{4,4}-\psi_{3,6}$, and $\psi_{1,5}+\psi_{2,3}-2\psi_{4,6}$,
where we write $\psi_{i,j}:=\psi_{[i]}\psi_{[j]}\eta^{-8}$.

At $c=8$,
we find $c/24+\mathrm{max}_j\Lambda_{jj} = \frac{3}{13} < 1$, and so we only need to consider 
$$\mathbb{Y}(J(\tau)) = \left(1, Y_2, Y_3, Y_4, 0,0,0,0,0,0,0,0,0,Y_{14},Y_{15},Y_{16},0,0,0,Y_{20},Y_{21},Y_
{22}\right)^\transpose$$
for $Y_2, Y_3, Y_4, Y_{14}, Y_{15}, Y_{16}, Y_{20}, Y_{21}, Y_{22} \in \mathbb N$.

Writing $\bbX(\tau)=\Xi(\tau)\,\bbY(J(\tau)) = q^\lambda \sum_{n=0}^\infty \mathbb{X}_n q^n$, the conditions
$\mathbb{X}_n \in \mathbb{R}^{22}_{\geq 0}$ merely for $n=0, 1$ give the 27 inequalities
{\small
\begin{align*}
0 & \leq 50 Y_1+50 Y_2+50 Y_3-50 Y_4 \displaybreak[1] \\
0 & \leq 7 Y_1-5 Y_2-2 Y_3+11 Y_{14}+2 Y_{15}-4 Y_{16}-35 Y_{20}-20 Y_{21}+22 Y_{22} \displaybreak[1] \\
0 & \leq 4 Y_1-Y_2-3 Y_3+7 Y_{14}-2 Y_{15}-Y_{16}-10 Y_{20}+4 Y_{21}+2 Y_{22} \displaybreak[1] \\
0 & \leq 21 Y_1+11 Y_2-32 Y_3+638 Y_{14}+2 Y_{15}+11 Y_{16}-761 Y_{20}+12 Y_{21}+342 Y_{22} \displaybreak[1] \\
0 & \leq 38 Y_1+10 Y_2-48 Y_3-1386 Y_{14}+3 Y_{15}-6 Y_{16}+514 Y_{20}+80 Y_{21}+733 Y_{22} \displaybreak[1] \\
0 & \leq 27 Y_1-101 Y_2+74 Y_3-5533 Y_{14}+2 Y_{15}+16 Y_{16}-10695 Y_{20}+300 Y_{21}+150 Y_{22} \displaybreak[1] \\
0 & \leq 76 Y_1+20 Y_2+2 Y_3+50 Y_4-1848 Y_{14}+3 Y_{15}+4 Y_{16}-6174 Y_{20}-162 Y_{21}-1555 Y_{22} \displaybreak[1] \\
0 & \leq 20 Y_1+78 Y_3+50 Y_4-682 Y_{14}-4 Y_{15}-14 Y_{16}+4514 Y_{20}+112 Y_{21}+1120 Y_{22} \displaybreak[1] \\
0 & \leq 595 Y_1-425 Y_2-170 Y_3-45177 Y_{14}-6 Y_{15}+56 Y_{16}+184289 Y_{20}-644 Y_{21}-54530 Y_{22} \displaybreak[1] \\
0 & \leq 320 Y_1-236 Y_2-84 Y_3+6468 Y_{14}+9 Y_{15}-48 Y_{16}-34736 Y_{20}-1120 Y_{21}+6998 Y_{22} \displaybreak[1] \\
0 & \leq 112 Y_1+50 Y_2-162 Y_3-649 Y_{14}-2 Y_{15}-15 Y_{16}+21651 Y_{20}+420 Y_{21}-3038 Y_{22} \displaybreak[1] \\
0 & \leq 98 Y_1+34 Y_2-132 Y_3-3010 Y_{14}-4 Y_{15}+6 Y_{16}+17990 Y_{20}-175 Y_{21}+842 Y_{22} \displaybreak[1] \\
0 & \leq 532 Y_1+192 Y_2-724 Y_3+110924 Y_{14}+5 Y_{15}+88 Y_{16}-226308 Y_{20}+481 Y_{21}+45493 Y_{22} \displaybreak[1] \\
0 & \leq 632 Y_1+245 Y_2-877 Y_3-200684 Y_{14}+16 Y_{15}-32 Y_{16}+128651 Y_{20}+2144 Y_{21}+79472 Y_{22} \displaybreak[1] \\
0 & \leq 410 Y_1-1526 Y_2+1116 Y_3-504274 Y_{14}+5 Y_{15}+86 Y_{16}-1555890 Y_{20}+6480 Y_{21}+11658 Y_{22} \displaybreak[1] \\
0 & \leq -2 Y_1-6 Y_2+8 Y_3+12 Y_{14}+3 Y_{15}+4 Y_{16}+42 Y_{20}-2 Y_{21}+13 Y_{22} \displaybreak[1] \\
0 & \leq 19 Y_1-60 Y_2+41 Y_3+1122 Y_{14}+4 Y_{15}-8 Y_{16}+271 Y_{20}+56 Y_{21}-1076 Y_{22} \displaybreak[1] \\
0 & \leq 7 Y_1-18 Y_2+11 Y_3-77 Y_{14}-2 Y_{15}+Y_{16}+189 Y_{20}-28 Y_{21}+194 Y_{22}
\end{align*}
}%
along with some redundant ones.
Some linear programming easily gives upper bounds on all the variables: $Y_2, Y_3 \leq 1, Y_4 \leq 3, Y_{15}, Y_{16} \leq 2$, and $Y_{14},Y_{20},Y_{21},Y_{22} = 0$.
We then easily enumerate all solutions, obtaining 13 possible character vectors. Of these, 9 have components which are identically zero, which is not allowed.
The remaining four have vacuum components as given below.
{\small
\begin{align*}
\bbY_1 & = \left(1,0,0,0,0,0,0,0,0,0,0,0,0,0,0,1,0,0,0,0,0,0\right)^\transpose \displaybreak[1] \\ 
 q^{1/3} \bbX_1(\tau)_{\omega_0} & = 1 + 12 q + 73 q^2 + 346 q^3 + 1390 q^4 + 4956 q^5 + 16715 q^6 + 52982 q^7 +  \cdots
 \displaybreak[1] \\ 
\bbY_2 & = \left(1,0,0,0,0,0,0,0,0,0,0,0,0,0,1,0,0,0,0,0,0,0\right)^\transpose \displaybreak[1] \\ 
 q^{1/3} \bbX_2(\tau)_{\omega_0} & = 1 + 3 q + 22 q^2 + 86 q^3 + 461 q^4 + 1992 q^5 + 8343 q^6 + 30997 q^7 +  \cdots
 \displaybreak[1] \\ 
\bbY_3 & = \left(1,0,0,0,0,0,0,0,0,0,0,0,0,0,1,1,0,0,0,0,0,0\right)^\transpose \displaybreak[1] \\ 
 q^{1/3} \bbX_3(\tau)_{\omega_0} & = 1 + 13 q + 83 q^2 + 372 q^3 + 1460 q^4 + 5112 q^5 + 17053 q^6 + 53651 q^7 +  \cdots
 \displaybreak[1] \\ 
\bbY_4 & = \left(1,0,0,0,0,0,0,0,0,0,0,0,0,0,2,0,0,0,0,0,0,0\right)^\transpose \displaybreak[1] \\ 
 q^{1/3} \bbX_4(\tau)_{\omega_0} & = 1 + 4 q + 32 q^2 + 112 q^3 + 531 q^4 + 2148 q^5 + 8681 q^6 + 31666 q^7 +  \cdots
 \displaybreak[1]
\end{align*}
}%

All four of these possible character vectors have four components equal to 
$$\theta_{[2]}^2\eta^{-8},
\theta_{[0]}\theta_{[1]}\eta^{-8},\theta_{[0]}\theta_{[2]}\eta^{-8},\theta_{[1]}^2\eta^{-8},$$ i.e. identical with components
of the character vector of the lattice VOA for $A_4\oplus A_4$. This is highly suggestive: the extended
Haagerup VOA (at $c=8$) should contain some orbifold of the $A_4\oplus A_4$ lattice VOA.
That subVOA would also have $c=8$, which means the (hypothetical) extended Haagerup VOA would be a finite
extension of that lattice orbifold. Something similar happens for the (still hypothetical) $c=8$
Haagerup VOA, but there the lattice orbifold VOA (which is $\cV_L^+$ for $L=A_{3}{52}[1,\frac{1}{4}]$)
only has $c=4$. So in this sense the extended Haagerup VOA is more accessible than the
Haagerup VOA. Curiously, this the same lattice  $A_{3}52[1,\frac{1}{4}]$ makes an appearance
both in the Haagerup and extended Haagerup.

We now employ Lemma \ref{lem:integrality} to ensure
integrality of the Fourier coefficients. 

\begin{lem}
The vector valued modular forms $\bbY_1, \bbY_2, \bbY_3, \bbY_4$ are integral.
\end{lem}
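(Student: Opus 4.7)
The literal assertion, that the constant polynomial vectors $\bbY_1,\ldots,\bbY_4$ have integer entries, is immediate from the enumeration: each was produced as a nonnegative integer solution of the 27 displayed inequalities in $Y_2,Y_3,Y_4,Y_{14},Y_{15},Y_{16},Y_{20},Y_{21},Y_{22}\in\bbN$. The substantive content, which is what is needed to assert that the $\bbX_i=\Xi\bbY_i$ are honest candidate character vectors, is that the Fourier expansion $\bbX_i(\tau)=q^\lambda\sum_{n\ge 0}\bbX_{i,n}q^n$ lies in $\bbZ^{22}$ at \emph{every} order $n$, not merely at the orders $n=0,1$ on which the search was conducted.

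My plan is to deduce this uniform integrality by applying Lemma \ref{lem:integrality} componentwise. For $c=8$ the representation is $\rho\cong(\rho_2^{(3)}\otimes\rho^{(13)}_{14})\oplus(\rho_2^{(3)}\otimes\rho_8^{(5)})\oplus(\rho_2^{(3)})^{\oplus 3}$, whose projective kernel $\Gamma$ contains $\Gamma(N)$ for $N=3\cdot 5\cdot 13=195$. Each scalar component $\bbX_{i,(j)}(\tau)$ is a weakly holomorphic modular function on $\Gamma$ with scalar multiplier, to which the lemma applies directly. The minimum exponent is $\lambda_{\omega_0}=-1/3\ge -8/24$, so one may take $k=8$; with $[\mathrm{SL}(2,\bbZ):\Gamma(N)]=N^3\prod_{p\mid N}(1-p^{-2})=6\,289\,920$, the Sturm-type bound $km/24=2\,096\,640$ is the number of Fourier coefficients to be verified. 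The recursion \eqref{eq:recurs} produces those coefficients as \emph{a priori} rationals from the explicit matrices $\Lambda$ and $\Xi_1$ of Section \ref{sec:EH-characters}; verifying they are in $\bbZ$ up to that order completes the argument.

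The main obstacle is the scale of this verification, which I would address by exploiting the structural data already at hand. First, the direct-sum decomposition of $\rho$ together with the integer entries of the change-of-basis $Q$ of Section \ref{sec:endgame} reduces the problem to integrality of the $\Xi$ of each irreducible summand. Second, the summands $\rho_2^{(3)}\otimes\rho^{(13)}_{14}$ and $\rho_2^{(3)}\otimes\rho_8^{(5)}$ have been realised explicitly as $\eta^{-8}$ times components of symmetric squares of lattice-theta vector-valued forms for $A_{3}52[1,\tfrac14]$ and $A_4$. Each lattice theta function has integer $q$-expansion by definition, and $\eta^{-8}=q^{-1/3}\prod_{n\ge 1}(1-q^n)^{-8}$ has nonnegative integer coefficients in its regular part, so the products $\theta_{[i]}\theta_{[j]}\eta^{-8}$ and $\psi_{[i]}\psi_{[j]}\eta^{-8}$ lie in $q^{-1/3}\bbZ[[q]]$. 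It then remains only to check that the integer linear combinations of these theta ratios prescribed by $Q\bbY_i$ yield the components of $\bbX_i$ in the physical basis; this is a bounded verification on the initial terms of each summand, independent of the full Sturm bound above.
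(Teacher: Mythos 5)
Your overall strategy (reduce to the irreducible summands and invoke Lemma \ref{lem:integrality} componentwise with a Sturm-type bound) is the right one, and your opening observation that the naive bound for $\Gamma(195)$ is infeasible is correct --- the paper avoids it by applying the lemma to each scalar component separately, whose projective kernel is $\pm\Gamma(13)$ (index $1092$, so $364$ coefficients), $\pm\Gamma(5)$ (index $60$, so $20$ coefficients), or all of $\mathrm{SL}(2,\bbZ)$. However, your proposed workaround has two genuine gaps. First, the lattice-theta construction supplies only \emph{one} integral column per block (the first column of $\Xi(\rho_2^{(3)}\otimes\rho_8^{(5)})$ and the third column of $\Xi(\rho_2^{(3)}\otimes\rho_{14}^{(13)})$). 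The $\bbY_i$ also load other columns (e.g.\ $Q\bbY_1=e_1-e_3+e_{15}+e_{22}$ involves column $1$ of the $13$-block), and those are produced from the theta column by the operators $\nabla_k$, multiplication by $J$, and a final change of basis enforcing the normalisation $\Xi=q^\Lambda(I+\cdots)$. That last step does \emph{not} preserve integrality in general --- witness the entries $61/3$, $-2/3$, $-3/2$, $-1/6$ in the displayed $\Xi_1(\rho_{14}^{(13)})$ and $\Xi_1(\rho_8^{(5)})$ --- so integrality of the needed columns is not free; it still has to be established by the Sturm-bound check (the paper verifies $364$ coefficients for the $13$-block columns and $20$ for the $5$-block columns).

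Second, and more seriously, you assert that the change of basis is integral, but the components of $\bbX_i$ in the physical basis are $Q^{-1}\Xi_n Q\,\bbY_i$, and while $Q$ is integral, $Q^{-1}$ is not: its first six rows carry denominators $3$ and $6$. Integrality of the relevant columns of $\Xi_n$ therefore does \emph{not} immediately give integrality of the first six components of $\bbX_i$; one must show that specific integer linear combinations of entries of $\Xi_n$ are divisible by $6$ for every $n$, which is an infinite family of congruences, not ``a bounded verification on the initial terms.'' The paper closes this gap by recognising each such combination (e.g.\ $\frac{1}{6}\bigl(-(\Xi_n)_{15,15}+(\Xi_n)_{22,22}\bigr)$ for the $\alpha_j$ components, and a $\pm\Gamma(13)$ combination for $\omega_0$) as the coefficient sequence of a single scalar weakly holomorphic modular function, and then applying Lemma \ref{lem:integrality} a second time to those scalar functions. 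Without this step --- or some substitute for it --- your argument does not establish the lemma.
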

\begin{proof}
We begin by showing that the columns of $\Xi_n(\rho_{13})$ and $\Xi_n(\rho_5)$ with $\Lambda_{jj} \geq -1/3$ are
themselves integral. (These are the only relevant columns, as all other entries of the $\bbY_i$ are automatically zero.)
To see this, we apply the Lemma to the vector-valued modular form $Q\Xi(\tau) Q^{-1} e_i$ for $i \in 
\{1,2,3,4,14,15,16,20,21,22\}$.
For $i \in \{1,2,3,4,14\}$, the projective kernel is $\pm \Gamma(13)$ with index $1092$, while for $i \in \{15,16\}$
the
projective kernel $\pm \Gamma(5)$ has index $60$, and for $i \in \{20,21,22\}$ the index is 1. Thus it suffices to check
out as far as $8 \cdot 1092 / 24 = 364$.

Next, note that $\bbY_1 - \bbY_i$ is supported on $\rho_{13}$: more precisely, each of the 
differences  $\bbX_1(\tau) - \bbX_i(\tau)$ lies in the $\bbZ$-span of the third and fourth columns
of $\Xi(\rho_2^{(3)}\otimes \rho_{14}^{(13)})$, so this is covered by the previous paragraph.

Finally, we need to see that $\bbX_1(\tau)$ is integral. We observe that the inverse
of $Q$ is almost integral:
\begin{align*} 
Q^{-1} & = \left(\begin{smallmatrix}
\frac{2}{3} & 0 & 0 & 0 & 0 & 0 & 0 & 0 & 0 & 0 & 0 & 0 & 0 & -\frac{1}{3} & \frac{1}{6} & 0 & 0 & 0 & 0 & \frac{1}{6} & \frac{1}{6} &
   \frac{1}{6} \\
 -\frac{1}{3} & 0 & 0 & 0 & 0 & 0 & 0 & 0 & 0 & 0 & 0 & 0 & 0 & \frac{2}{3} & \frac{1}{6} & 0 & 0 & 0 & 0 & \frac{1}{6} & \frac{1}{6} &
   \frac{1}{6} \\
 -\frac{1}{3} & 0 & 0 & 0 & 0 & 0 & 0 & 0 & 0 & 0 & 0 & 0 & 0 & -\frac{1}{3} & \frac{1}{6} & 0 & 0 & 0 & 0 & \frac{1}{6} & \frac{1}{6}
   & \frac{1}{6} \\
 0 & 0 & 0 & 0 & 0 & 0 & 0 & 0 & 0 & 0 & 0 & 0 & 0 & 0 & -\frac{1}{6} & 0 & 0 & 0 & 0 & -\frac{1}{6} & \frac{5}{6} & \frac{1}{6} \\
 0 & 0 & 0 & 0 & 0 & 0 & 0 & 0 & 0 & 0 & 0 & 0 & 0 & 0 & -\frac{1}{6} & 0 & 0 & 0 & 0 & \frac{5}{6} & -\frac{1}{6} & \frac{1}{6} \\
 0 & 0 & 0 & 0 & 0 & 0 & 0 & 0 & 0 & 0 & 0 & 0 & 0 & 0 & -\frac{1}{6} & 0 & 0 & 0 & 0 & -\frac{1}{6} & -\frac{1}{6} & \frac{1}{6} \\
 0 & 0 & 0 & 0 & 0 & 0 & 0 & 0 & 0 & 0 & 0 & 0 & 0 & 0 & 0 & -1 & 0 & 0 & 0 & 0 & 0 & 0 \\
 0 & 0 & 0 & 0 & 0 & 0 & 0 & 0 & 0 & 0 & 0 & 0 & 0 & 0 & 0 & 0 & -1 & 0 & 0 & 0 & 0 & 0 \\
 0 & 0 & 0 & 0 & 0 & 0 & 0 & 0 & 0 & 0 & 0 & 0 & 0 & 0 & 0 & 0 & 0 & -1 & 0 & 0 & 0 & 0 \\
 0 & 0 & 0 & 0 & 0 & 0 & 0 & 0 & 0 & 0 & 0 & 0 & 0 & 0 & 0 & 0 & 0 & 0 & -1 & 0 & 0 & 0 \\
 0 & 0 & 0 & 0 & 0 & 0 & 0 & 0 & 0 & -1 & 0 & 0 & 0 & 0 & 0 & 0 & 0 & 0 & 0 & 0 & 0 & 0 \\
 0 & 0 & 0 & 0 & 0 & 0 & -1 & 0 & 0 & 0 & 0 & 0 & 0 & 0 & 0 & 0 & 0 & 0 & 0 & 0 & 0 & 0 \\
 0 & 0 & 0 & 0 & 1 & 0 & 0 & 0 & 0 & 0 & 0 & 0 & 0 & 0 & 0 & 0 & 0 & 0 & 0 & 0 & 0 & 0 \\
 0 & 0 & 0 & 0 & 0 & 0 & 0 & 1 & 0 & 0 & 0 & 0 & 0 & 0 & 0 & 0 & 0 & 0 & 0 & 0 & 0 & 0 \\
 0 & 0 & 0 & 1 & 0 & 0 & 0 & 0 & 0 & 0 & 0 & 0 & 0 & 0 & 0 & 0 & 0 & 0 & 0 & 0 & 0 & 0 \\
 0 & 0 & -1 & 0 & 0 & 0 & 0 & 0 & 0 & 0 & 0 & 0 & 0 & 0 & 0 & 0 & 0 & 0 & 0 & 0 & 0 & 0 \\
 0 & 0 & 0 & 0 & 0 & 0 & 0 & 0 & 0 & 0 & 1 & 0 & 0 & 0 & 0 & 0 & 0 & 0 & 0 & 0 & 0 & 0 \\
 0 & 0 & 0 & 0 & 0 & 0 & 0 & 0 & 0 & 0 & 0 & -1 & 0 & 0 & 0 & 0 & 0 & 0 & 0 & 0 & 0 & 0 \\
 0 & -1 & 0 & 0 & 0 & 0 & 0 & 0 & 0 & 0 & 0 & 0 & 0 & 0 & 0 & 0 & 0 & 0 & 0 & 0 & 0 & 0 \\
 0 & 0 & 0 & 0 & 0 & -1 & 0 & 0 & 0 & 0 & 0 & 0 & 0 & 0 & 0 & 0 & 0 & 0 & 0 & 0 & 0 & 0 \\
 0 & 0 & 0 & 0 & 0 & 0 & 0 & 0 & 0 & 0 & 0 & 0 & -1 & 0 & 0 & 0 & 0 & 0 & 0 & 0 & 0 & 0 \\
 0 & 0 & 0 & 0 & 0 & 0 & 0 & 0 & -1 & 0 & 0 & 0 & 0 & 0 & 0 & 0 & 0 & 0 & 0 & 0 & 0 & 0
 \end{smallmatrix}\right).
\end{align*}
We see from the locations of denominators in $Q^{-1}$ (and our earlier observation about the integrality of the
matrices $\Xi_n(\rho)$) that it is only the first six entries of $\bbX_1(\tau)$ which might not be integral.
Consider first $\bbX_1(\tau)_{\alpha_1}$, the 4th component (which also equals the fifth and sixth
components). Note $Q \bbY_1 = e_1 - e_3 + e_{15} + e_{22}$
, and compute 
\begin{align*}
(Q^{-1} \Xi_n Q \bbY_1)_4 
	= \frac{1}{6} \big(
       & -(\Xi_n)_{15,1}+(\Xi_n)_{15,3}-(\Xi_n)_{15,15}-(\Xi_n)_{15,22} \\
       & -(\Xi_n)_{20,1}+(\Xi_n)_{20,3}-(\Xi_n)_{20,15}-(\Xi_n)_{20,22} \\
       & -(\Xi_n)_{21,1}+(\Xi_n)_{21,3}-(\Xi_n)_{21,15}-(\Xi_n)_{21,22} \\
       & +(\Xi_n)_{22,1}-(\Xi_n)_{22,3}+(\Xi_n)_{22,15}+(\Xi_n)_{22,22} \big)\\
\intertext{and observe that most of these vanish as $\Xi_n$ is block diagonal, obtaining}
(\bbX_1)_{\alpha_1}=(Q^{-1} \Xi_n Q \bbY_1)_4 = \frac{1}{6}\big(& -(\Xi_n)_{15,15}+(\Xi_n)_{22,22}\big).
\end{align*}
These are the coefficients of a (scalar) modular function for $\Gamma=\pm\Gamma(5)$, so we can apply Lemma \ref
{lem:integrality} with $m=60$, $k=8$. After checking explicitly that the first 20 values of $(Q^{-1} \Xi_n Q
\bbY_1)_4$ are integral, this ensures that $\bbX_1(\tau)_{\alpha_1}$ is integral. 


Now consider $\bbX_1(\tau)_{\omega_0}$, the 1st component. 
We need to show that $(Q^{-1} \Xi_n Q \bbY_1)_1$ is integral. To do this, we take advantage of the fact that
$(Q^{-1})_1 + (Q^{-1})_4 \pmod 1 = \frac{2}{3} e_1 + \frac{2}{3} e_{14} + \frac{1}{3} e_{22}$,
and that we have already shown  $\Xi_n Q \bbY_1$ and $(Q^{-1} \Xi_n Q \bbY_1)_4$ are integral.
We then see
\begin{align*}
\left(\frac{2}{3} e_1 + \frac{2}{3} e_{14} + \frac{1}{3} e_{22}\right) \Xi_n Q \bbY_1 
	& = \frac{2}{3} \left((\Xi_n)_{1,1} - (\Xi_n)_{1,3} + (\Xi_n)_{14,1} - (\Xi_n)_{14,3}\right)
	  + \frac{1}{3} (\Xi_n)_{22,22}
\end{align*}
is a modular function for $\Gamma = \pm \Gamma(13)$. Again, Lemma \ref
{lem:integrality} with $m=1092$, $k=8$ allows us to check the first 364 coefficients to ensure that $\bbX_1(\tau)_
{\omega_0}$ is integral.

Finally $(Q^{-1})_1 - (Q^{-1})_2$ and $(Q^{-1})_1 - (Q^{-1})_3$ are integral and supported in entries $1, 14$, and the
corresponding columns of $\Xi(\tau)$ are integral, so $\bbX_1(\tau)_{\omega_i}$ are all integral.
\end{proof}

Multiplying any character vector at $c=8$ by $J(\tau)^{1/3}$ resp.\ $J(\tau)^{2/3}$
will give a character vector at $c=16$ resp.\ $c=24$. But there should be many more as
$c$ grows, and knowing other candidates could be important if all 4 candidates at $c=8$ fail
to be realised by a vertex operator algebra.
At $c = 16$ we find $c/24+\mathrm{max}_j\Lambda_{jj} = \frac{4}{5} < 1$, so we consider
$$\mathbb{Y}(J(\tau)) = \left(1, Y_2, Y_3, Y_4, Y_5, Y_6, Y_7, Y_8, Y_9, 0,0,0,0, Y_{14}, Y_{15}, Y_{16}, Y_{17} , 0, Y_{19}, Y_{20}, Y_{21}, Y_{22}\right)^\transpose.$$
Again the conditions $\mathbb{X}_n \in \mathbb{R}^{22}_{\geq 0}$ for $n=0,1$ suffice to obtain finitely many cases; we
obtain inequalities
{\small
\begin{align*}
0 & \leq 13 Y_1-13 Y_2-Y_{14}-5 Y_{15}+8 Y_{16}-21 Y_{17}+2 Y_{19}+3 Y_{20}-9 Y_{21}-Y_{22} \displaybreak[1] \\
0 & \leq 2119 Y_1-1547 Y_2-572 Y_3+2 Y_{14}+351 Y_{15}-896 Y_{16}-728 Y_{17}+832 Y_{19}-28 Y_{20}-5174 Y_{21}+Y_{22} \displaybreak[1] \\
0 & \leq 975 Y_1-715 Y_2-260 Y_3+6 Y_{14}-254 Y_{15}-16 Y_{16}-2058 Y_{17}-235 Y_{19}-60 Y_{20}+750 Y_{21}+Y_{22} \displaybreak[1] \\
0 & \leq 39 Y_1+13 Y_2-52 Y_3-3 Y_{14}+26 Y_{15}-16 Y_{16}-78 Y_{17}-26 Y_{19}+2 Y_{20}+65 Y_{21}+Y_{22} \displaybreak[1] \\
0 & \leq 339 Y_1+103 Y_2+25 Y_3+175 Y_4+175 Y_5+175 Y_6+98 Y_7+20 Y_8+880 Y_9-4 Y_{14}  \\
	& \qquad +28 Y_{15}+32 Y_{16}+266 Y_{17}-212 Y_{19}-30 Y_{20}-452 Y_{21}-2 Y_{22} \displaybreak[1] \\
0 & \leq 103 Y_1+27 Y_2+337 Y_3+175 Y_4+175 Y_5+175 Y_6+98 Y_7+20 Y_8+880 Y_9-2 Y_{14} \\
	& \qquad -86 Y_{15}-144 Y_{16}-882 Y_{17}+146 Y_{19}+20 Y_{20}+326 Y_{21}+2 Y_{22} \displaybreak[1] \\
0 & \leq 175 Y_1+175 Y_2+175 Y_3+817 Y_4-175 Y_5-175 Y_6-98 Y_7-20 Y_8-880 Y_9 \displaybreak[1] \\
0 & \leq 175 Y_1+175 Y_2+175 Y_3-175 Y_4+817 Y_5-175 Y_6-98 Y_7-20 Y_8-880 Y_9 \displaybreak[1] \\
0 & \leq 175 Y_1+175 Y_2+175 Y_3-175 Y_4-175 Y_5+817 Y_6-98 Y_7-20 Y_8-880 Y_9 \displaybreak[1] \\
0 & \leq 375 Y_1+375 Y_2+375 Y_3-375 Y_4-375 Y_5-375 Y_6+56 Y_7-50 Y_8+3300 Y_9 \displaybreak[1] \\
0 & \leq 2025 Y_1+2025 Y_2+2025 Y_3-2025 Y_4-2025 Y_5-2025 Y_6-1372 Y_7+164 Y_8+8624 Y_9 \displaybreak[1] \\
0 & \leq 227375 Y_1+227375 Y_2+227375 Y_3-227375 Y_4-227375 Y_5-227375 Y_6+46648 Y_7+850 Y_8-44107350 Y_9 \displaybreak[1] \\
0 & \leq 6084 Y_1-4394 Y_2-1690 Y_3-6 Y_{14}-374 Y_{15}+1702 Y_{16}-49168 Y_{17}+1564 Y_{19}+88 Y_{20}-4582 Y_{21}-2 Y_{22} \displaybreak[1] \\
0 & \leq 115765 Y_1-83993 Y_2-31772 Y_3+8 Y_{14}+7956 Y_{15}-28112 Y_{16}-79196 Y_{17} \\
	& \qquad +34684 Y_{19}-308 Y_{20}-367952 Y_{21}+Y_{22} \displaybreak[1] \\
0 & \leq 67704 Y_1-49114 Y_2-18590 Y_3+22 Y_{14}-6878 Y_{15}-906 Y_{16}-309708 Y_{17}\\
	& \qquad -12288 Y_{19}-780 Y_{20}+70830 Y_{21}+2 Y_{22} \displaybreak[1] \\
0 & \leq 3042 Y_1-2197 Y_2-845 Y_3+10 Y_{14}-26 Y_{15}-460 Y_{16}+16380 Y_{17}-2392 Y_{19}+44 Y_{20}+3770 Y_{21}-Y_{22} \displaybreak[1] \\
0 & \leq 455 Y_1+182 Y_2-637 Y_3-68 Y_{15}-160 Y_{16}+980 Y_{17}+92 Y_{19}+28 Y_{20}+1160 Y_{21}-Y_{22} \displaybreak[1] \\
0 & \leq 312 Y_1+104 Y_2-416 Y_3-3 Y_{14}-65 Y_{15}+64 Y_{16}+2002 Y_{17}+234 Y_{19}+30 Y_{20}-390 Y_{21} \displaybreak[1] \\
0 & \leq 13 Y_1+13 Y_2-26 Y_3+2 Y_{14}+6 Y_{15}+19 Y_{16}-28 Y_{17}+24 Y_{19}-4 Y_{20}+10 Y_{21}+Y_{22} \displaybreak[1] \\
0 & \leq 8450 Y_1+3211 Y_2-11661 Y_3-20 Y_{14}+1768 Y_{15}-1320 Y_{16}-52780 Y_{17}\\
	& \qquad -3484 Y_{19}+44 Y_{20}+19604 Y_{21}+2 Y_{22} \displaybreak[1] \\
0 & \leq 78 Y_1-260 Y_2+182 Y_3-6 Y_{14}+14 Y_{15}+98 Y_{16}+924 Y_{17}+80 Y_{19}-20 Y_{20}+546 Y_{21} \displaybreak[1] \\
0 & \leq 403 Y_1-1443 Y_2+1040 Y_3+4 Y_{14}+208 Y_{15}+523 Y_{16}-3796 Y_{17}-884 Y_{19}+76 Y_{20}+156 Y_{21}+Y_{22} \displaybreak[1] \\
0 & \leq 26 Y_1-91 Y_2+65 Y_3+2 Y_{14}+34 Y_{15}-28 Y_{16}+56 Y_{17}+92 Y_{19}+58 Y_{21}-Y_{22} \displaybreak[1] \\
0 & \leq 1170 Y_1-4342 Y_2+3172 Y_3-5 Y_{14}-351 Y_{15}+280 Y_{16}+20475 Y_{17}+169 Y_{19}+57 Y_{20}-8385 Y_{21}+2 Y_{22}
\end{align*}
}%
with 179,459 solutions. All appear to have positive integral Fourier coefficients for many (and probably all) terms.
This time, Lemma 10.2 would require checking about twice as many coefficients for integrality as was necessary
for $c=8$. Although this is probably possible, enough effort is involved that we have not done this.

At $c  =24$  we find $c/24+\mathrm{max}_j\Lambda_{jj} = 1$, so we consider
\begin{align*}
\mathbb{Y}(J(\tau)) & = \left(J(\tau) + Y_1, Y_2, Y_3, Y_4, Y_5, Y_6, Y_7, Y_8, Y_9, \right. \\
& \qquad \qquad \left. Y_{10}, Y_{11}, Y_{12}, Y_{13}, Y_{14}, Y_{15}, Y_{16}, Y_{17} , Y_{18}, Y_{19}, Y_{20}, Y_{21},
Y_{22}\right)^\transpose,
\end{align*}
but this time just with $Y_i \in \mathbb{C}$. The requirement that the first component $\Xi(\tau)\,\bbY(J(\tau))$ has the
strictly lowest leading exponent forces $Y_{10} = - \frac{1}{6}, Y_{18} = - \frac{2}{3}$, and $Y_{21} = -1$.
Now, sadly, the conditions $\mathbb{X}_n \in \mathbb{R}^{22}_{\geq 0}$ do not appear to cut out a bounded region, no matter how high an $n$ we consider. (In particular, $Y_1, \ldots, Y_6$ are unbounded.) However the conditions $\mathbb{X}'_{n;j} \geq 0$ for $n=0,1$ do cut out a bounded region.
We cannot enumerate the points however (the naive upper bound we have on its volume in the $Y_i$ coordinate system is around $10^{43}$), and  the collection of solutions may shrink further as we consider $\mathbb{X}'_{n;j} \geq 0$ for larger $n$.
Nevertheless, it is possible to find new individual solutions, for example
\begin{align*}
\mathbb{Y}(J(\tau)) & = \left(J(\tau) + \frac{519}{2}, - \frac{23}{6}, \frac{83}{6}, \frac{625}{6}, \frac{625}{6}, \frac{625}{6}, 
\frac{1}{2}, \frac{19}{3},  \right. \\
& \qquad \qquad \left. 12, - \frac{1}{6}, 1, 16, - \frac{10}{3}, 
\frac{77}{3}, -4, 5, \frac{302}{3}, - \frac{2}{3}, \frac{1}{3}, 10, -1, 49\right)^\transpose
\end{align*}
which gives non-negative integral $\mathbb{X}'_{n;j}$ at least up to $n=50$. 

\appendix
\section{Some consequences of Lemma \ref{lem:bounding-orbit-size-from-N}}

We record here some additional consequences of Lemma \ref{lem:bounding-orbit-size-from-N}, which although unneeded for
the present argument, may prove useful to others.
\begin{cor}
\label{lem:bounding-N-from-orbit-size}
If the full Galois orbit of some $x\in\Phi$ has cardinality $k \leq 6$, then the root of unity $T_{xx}$
has order dividing some number in the set $\cN_k$, where
\begin{align*}
\cN_1 & =\{2^3 \cdot 3\} \\
\cN_2 & =\{2^3 \cdot 3 \cdot 5, 2^4 \cdot 3\} \\
\cN_3 & =\{2^3 \cdot 3^2, 2^3 \cdot 3 \cdot 7\} \\
\cN_4 & =\{2^5 \cdot 3, 2^4 \cdot 3 \cdot 5\} \\
\cN_5 & =\{2^3 \cdot 3 \cdot 11\} \\
\cN_6 & =\{2^4 \cdot 3^2, 2^4 \cdot 3 \cdot 7, 2^3 \cdot 3^2\cdot5,2^3 \cdot 3 \cdot 5 \cdot 7, 2^3 \cdot 3 \cdot 13\}.
\end{align*}
\end{cor}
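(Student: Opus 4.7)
The plan is a direct casework application of Lemma \ref{lem:bounding-orbit-size-from-N}. That lemma tells us the full Galois orbit of $x$ has size a multiple of $k(N_x)$, so if the orbit has cardinality $k$, then $k(N_x)$ must divide $k$; in particular $k(N_x)\le k$. I want to enumerate all $N_x$ for which this holds, and record the maximal such values (those that cover all the others under divisibility).

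The key structural observation is that $k(N)=f_2(\mu_2)\prod_{2<p\mid N}f_p(\mu_p)$, where $f_2(\mu_2)=\max\{1,2^{\mu_2-3}\}$ and $f_p(\mu_p)=p^{\mu_p-1}(p-1)/2$ for odd $p$. So $k(N)$ is multiplicative in the prime-power parts of $N$ after combining the 2-part into $f_2$. This lets me decompose each target value $d\mid k$ as $d=f_2(\mu_2)\prod_p f_p(\mu_p)$ and enumerate factorizations. I would first tabulate the small values of these local factors: $f_2$ takes the values $1,1,1,1,2,4,8,\ldots$ as $\mu_2=0,1,2,3,4,5,6,\ldots$; $f_3$ takes $1,3,9,\ldots$; $f_5$ takes $2,10,\ldots$; $f_7$ takes $3,21,\ldots$; $f_{11}$ takes $5,\ldots$; $f_{13}$ takes $6,\ldots$; and every prime $p\ge 17$ gives $f_p(1)\ge 8>6$, so no prime beyond $13$ is ever relevant.

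Having fixed this table, for each $k\in\{1,\ldots,6\}$ I loop over the divisors $d$ of $k$, and for each factorization of $d$ into one allowed $f_2$-value and odd $f_p(\mu_p)$-values at distinct primes, I read off the corresponding $N=2^{\mu_2}\prod p^{\mu_p}$. For example, for $k=6$ the divisors are $1,2,3,6$; the factorization $6=1\cdot 6$ with $f_{13}(1)=6$ gives $N\mid 2^3\cdot 3\cdot 13$; $6=1\cdot 2\cdot 3$ splits as either $f_5(1)f_7(1)$ giving $N\mid 2^3\cdot 3\cdot 5\cdot 7$ or $f_5(1)f_3(2)$ giving $N\mid 2^3\cdot 3^2\cdot 5$; and $6=2\cdot 3$ splits similarly into the $\mu_2=4$ cases $2^4\cdot 3\cdot 7$ and $2^4\cdot 3^2$. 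After doing this for $k=1,\ldots,6$, I discard any $N$ that divides another $N$ on the same list, leaving precisely the sets $\mathcal{N}_k$ in the statement.

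The only potential pitfall is completeness: I must make sure no prime, and no prime power, is overlooked. Since $f_p(1)>6$ for $p\ge 17$, and $f_p(\mu_p)$ is strictly increasing in $\mu_p$ (so $\mu_p$ is bounded once we fix $d\le 6$), the search is finite and mechanical. Likewise bounding $\mu_2$ by the constraint $f_2(\mu_2)\le 6$ limits $\mu_2\le 5$. So the enumeration is exhaustive and the six lists $\mathcal{N}_1,\ldots,\mathcal{N}_6$ claimed in the corollary are exactly what survive.
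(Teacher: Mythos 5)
Your proposal is correct and follows essentially the same route as the paper: observe that $k(N_x)$ is multiplicative and increasing in the prime factorization of $N_x$, rule out primes $p\ge 17$ since $f_p(1)=(p-1)/2>6$, and then exhaust the finitely many remaining exponent choices. One small point in your favor: you explicitly invoke the divisibility constraint $k(N_x)\mid k$ (rather than merely $k(N_x)\le k$), which is what the lemma actually gives and is genuinely needed — for instance, $N=2^3\cdot 3^2$ has $k(N)=3\le 4$ yet is correctly absent from $\cN_4$ — whereas the paper leaves this point implicit.
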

\begin{proof}
Clearly the formula for $k(N_x)$ in Lemma \ref{lem:bounding-orbit-size-from-N} is increasing with respect to the
factorization of $N_x$. Moreover $N_x$ can not be divisible by any prime $p$ larger than $13$, as otherwise $k(N_x) \ge
(p-1)/2 > 6$.
Thus we just need to check small exponents in $N_x = 2^{\mu_2} 3^{\mu_3} 5^{\mu_5} 7^{\mu_7} 11^{\mu_{11}} 13^{\mu_{13}}$.
\end{proof}

The Mathematica notebook {\tt ConductorsForOrbitsSize.nb} available with the {\tt arXiv} sources of this article readily computes $\cN_k$ for values of
$k$ up to
several hundred.

\begin{cor}
Let $k_x$ be the size of the full Galois orbit of an object $x$ and $N_x$ be the order of $T_{xx}$. Then for any $\delta > 0$ we have
$$N_x \leq C_\delta k_x^{1+\delta}$$
where
$$C_\delta = 24 \prod_{p\in P_\delta} p \left(\frac{2}{p-1}\right)^{1+\delta}$$
where the product is taken over the finite set $$P_\delta= \left\{ 3 < p : \text{$p$ is prime and $p
\left(\frac{2}{p-1}\right)^{1+\delta} > 1$} \right\}.$$
\end{cor}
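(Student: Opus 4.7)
The plan is to bootstrap directly from Lemma \ref{lem:bounding-orbit-size-from-N}. Combining $k_x\geq k(N_x)$ with the explicit formula for $k(N_x)$ and $2^{\min\{\mu_2,3\}}\leq 8$ rearranges at once to
$$N_x \;\leq\; 8\,k_x \prod_{2<p\mid N_x}\frac{2p}{p-1}.$$
The remaining task is to bound the product on the right by $3\, k_x^\delta \prod_{p\in P_\delta}p(2/(p-1))^{1+\delta}$, which, combined with the external factor $8$, yields the claimed constant $C_\delta$ and the factor $24=8\cdot 3$.

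The engine of the argument is the elementary identity $\frac{2p}{p-1}=p\bigl(\frac{2}{p-1}\bigr)^{1+\delta}\bigl(\frac{p-1}{2}\bigr)^{\delta}$, applied termwise to each odd prime $p>3$ dividing $N_x$. If $p\notin P_\delta$, then by definition $p(2/(p-1))^{1+\delta}\leq 1$, so the identity collapses to the clean estimate $\frac{2p}{p-1}\leq\bigl(\frac{p-1}{2}\bigr)^\delta$. If $p\in P_\delta$, I keep the identity as written, isolating the ``heavy'' factor $p(2/(p-1))^{1+\delta}$ destined for $C_\delta$ and a residual $\bigl(\frac{p-1}{2}\bigr)^\delta$ of the same shape as in the previous case. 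The prime $p=3$ is handled separately and contributes the flat factor $2\cdot 3/(3-1)=3$, accounting for the promotion of $8$ to $24$.

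To absorb the accumulated residuals I use multiplicativity of $k(\cdot)$: since $k(N_x)=k(2^{\mu_2})\prod_{2<p\mid N_x}p^{\mu_p-1}(p-1)/2$ and each factor is at least $1$, we have $\prod_{2<p\mid N_x}(p-1)/2\leq k(N_x)\leq k_x$, hence $\prod_{p>3,\,p\mid N_x}\bigl(\frac{p-1}{2}\bigr)^\delta\leq k_x^\delta$. The surviving heavy factors $p(2/(p-1))^{1+\delta}$ for $p\in P_\delta\cap\{p\mid N_x\}$ are each strictly greater than $1$ by the defining condition of $P_\delta$, so the product may be harmlessly enlarged to run over all of $P_\delta$. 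Stitching these estimates together gives $N_x\leq 24\,k_x^{1+\delta}\prod_{p\in P_\delta}p(2/(p-1))^{1+\delta}=C_\delta k_x^{1+\delta}$.

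Finally, $P_\delta$ is finite because $p(2/(p-1))^{1+\delta}\sim 2^{1+\delta}p^{-\delta}\to 0$ as $p\to\infty$, so $C_\delta$ is a well-defined finite constant. The only mildly nonobvious step is spotting the identity that drives the dichotomy between primes in and out of $P_\delta$; once that identity is in hand everything else is routine accounting, so I do not anticipate a substantive obstacle.
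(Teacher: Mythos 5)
Your proof is correct and takes essentially the same approach as the paper's. The only bookkeeping difference is that the paper bounds $N_x/k(N_x)^{1+\delta}$ as a single product of terms $R_p$ (tracking the exponents $\mu_p$ explicitly and observing the worst case is $\mu_2=3$, $\mu_p=1$), whereas you split off $N_x/k(N_x)$ first (which cleanly eliminates the $\mu_p$-dependence for odd primes) and then use $k(N_x)^\delta\geq\prod_{3<p\mid N_x}((p-1)/2)^\delta$ separately; the engine — the formula for $k(N_x)$, the identity isolating $p(2/(p-1))^{1+\delta}$, dropping primes outside $P_\delta$, and enlarging the product to all of $P_\delta$ — is the same.
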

(The set $P_\delta$ is certainly finite as all such primes are less than $\max\{7, 1+ 2 \left(\frac{11}{5}\right)^{1/\delta}\}$.)

\begin{proof}
Write $N_x=\prod_pp^{\mu_p}$ as before. We have
$$\frac{N_x}{k_x^{1+\delta}} = \prod_{p | N_x} R_p$$
where
\begin{align*}
R_2 & = \begin{cases}
2 & \text{if $\mu_2 = 1$} \\
4 & \text{if $\mu_2 = 2$} \\
2^{3+\delta(3-\mu_2)} & \text{if $\mu_2 \geq 3$}
\end{cases} \\
\intertext{and}
R_p & = p^{1+\delta(1-\mu_p)} \left(\frac{2}{p-1}\right)^{1+\delta}.
\end{align*}
Thus in the worst case $\mu_2 = 3$ and $\mu_p = 1$ for each other $p | N_x$. 
When $\mu_p = 1$, $R_p$ simplifies to $p \left(\frac{2}{p-1}\right)^{1+\delta}$.
We then have
\begin{align*}
\frac{N_x}{k_x^{1+\delta}} & = R_2 R_3 \prod_{3<p | N_x} R_p \\
 & \leq 24 \prod_{p \in P_\delta} p \left(\frac{2}{p-1}\right)^{1+\delta} \\
 & = C_\delta. \qedhere
\end{align*}
\end{proof}

The rank of a modular tensor category is the sum of the sizes of the Galois orbits of objects, while the exponent is the least common multiple of the orders of the eigenvalues of $T$, so while we have close-to-linear bounds on the conductor on each orbit, it is still possible to have exponential growth of $\operatorname{ord}(T)$ relative to the rank, as for $\operatorname{Rep} D S_n$.

Incidentally, for all odd primes the smallest irrep of SL$(2,\bbZ_{p^\nu})$ with conductor $p^\nu$
for $\nu\ge 2$
has dimension $(p^2-1)p^{\nu-2}$. The smallest irrep with conductor $2^\nu$ for $\nu\ge 4$ has
dimension $3\cdot 2^{\nu-4}$. 

\section{Explicit matrices for some irreps of \texorpdfstring{$SL(2,\bbZ)$}{SL(2,Z/13)}}
\label{appendix:matrices}

The representations we are interested in both lie in the \textit{principal series} of SL($2,\bbZ_p$).
In particular, write $B$ for the (Borel) subgroup of upper-triangular matrices
$\begin{psmallmatrix} a & b \\ 0 & a^{-1} \end{psmallmatrix}$. Each irrep $\lambda$ of $\bbZ^\times_p\cong
\bbZ_{p-1}$ extends to $B$ by $\lambda\begin{psmallmatrix} a & b \\ 0 & a^{-1} \end{psmallmatrix}=\lambda
(a)$. Denote by $\rho^{(p);\lambda}$ the induced representation Ind$_B^{SL(2,\bbZ_p)}
\lambda$ --- it will be $p+1$-dimensional. Then $\rho^{(p);\lambda}\cong \rho^{(p);\bar{\lambda}}$ is  irreducible iff $\lambda^2\ne 1$. By contrast, 
$\rho^{(p);1}$ is the direct sum of 1 and an irrep called the Steinberg representation, while $\rho^{(p);\lambda}$ for the order-2 $\lambda$ is the
direct sum of two $(p+1)/2$-dimensional irreps. Coset representatives 
for SL$(2,p)/B$ are $\begin{psmallmatrix} 1 & 0 \\ j & 1\end{psmallmatrix}$ and $
\begin{psmallmatrix} 0 & -1 \\ 1 & 1\end{psmallmatrix}$, and using this it is easy to work out not merely the characters of
$\rho^{(p);\lambda}$, but explicit matrices as well. 

The modular data of the centre of the extended Haagerup has two building blocks: the conductor-5 
irrep $\rho_8^{(5)}$ and the conductor-13 irrep $\rho_{14}^{(13)}$.
The irrep $\rho^{(5)}_8$ is the Steinberg representation for SL$(2,\bbZ_5)$,
while the other irrep, $\rho^{(13)}_{14}$, is $\rho^{(13);\lambda}$ for the 
 (unique up to complex conjugate) order-3 $\lambda$. It is thus easy to work out
explicit matrix realisations.
First, $\rho_8^{(5)}$ is generated by matrices $T_8^{(5)}=\mathrm{diag}(1,\xi_5,\xi_5^2,\xi_5^3,\xi_5^4)$ and
$$S^{(5)}_8 := \frac{1}{5}\left(\begin{matrix}-1&            -6&  -6&  -6&   -6\\ 
 -1& -2c-c'&                      2c'&2c& -c-2c'\\ -1 &   2c'& -c-2c'& -2c-c'&      2c\\
 -1&         2c& -2c-c'& -c-2c'&     2c'\\ -1& -c-2c'&        2c&          2c'& -2c-c'\end{matrix}\right)$$
where we write $c=2\cos(2\pi/5)$ and $c'=2\cos(4\pi/5)$.

Likewise, $\rho^{(13)}_{14}$ is generated by $14\times 14$ matrices $S^{(13)}_{14}$ and $T^{(13)}_{14}$.
Label their rows/columns by $0,1,2,\ldots,12,0'$ in that order. Then $\left(T^{(13)}_{14}\right)_{00}=\left(T^{(13)}_{14}\right)_{0'0'}=1$ and $\left(T^{(13)}_{14}\right)_{ll}=\xi_{13}^l$ for each $1\le l\le 12$. Write $c_j=2\cos(2\pi j/13)$. Define
vectors $\varepsilon=(1,1,-1,-1,1,-1)$ and $\varepsilon'=(1,1,1,-1,1,1)$ and quantities
$s(l)=(c_l-c_{2l}-c_{3l}+c_{5l})/13$, $s'(l)=(1+3c_l+3c_{5l})/13$, $t(l)=(2-c_l+c_{2l}-c_{4l})/13$, and $t'(l)=(2c_{2l}-c_{3l}-c_{5l})/13$. Then
$$S_{00}=s(1)\,,\ S_{00'}=s(2)\,,\ S_{0'0}=-s(4)\,,\ S_{0'0'}=-s(1)\,,$$
$$S_{l^2,0}=\varepsilon_l\,s(l)\,,\ S_{0,l^2}=\varepsilon_l\,s'(l)\,,\ S_{2l^2,0}=\varepsilon'_l\,s(4l)\,,\ S_{0,2l^2}=\varepsilon'_l\,s'(4l)\,,$$
$$S_{l^2,0'}=\varepsilon_l\,s(2l)\,,\ S_{0',l^2}=-\varepsilon_l\,s'(4l)\,,\ S_{2l^2,0'}=\varepsilon'_l\,s(5l)\,,\ S_{0',2l^2}=-\varepsilon'_l\,s'(3l)\,,$$
$$S_{l^2,m^2}=\varepsilon_l\,\varepsilon_m\,t(lm)\,,\ S_{l^2,2m^2}=S_{2m^2,l^2}=\varepsilon_l\,\varepsilon'_m\,t'(lm)\,,\ S_{2l^2,2m^2}=\varepsilon'_l\,\varepsilon'_m\,t(2lm)\,,$$
where subscripts in  $S_{l^2,m^2}$ etc are taken mod 13, and $l,m$ run over all numbers $1,2,\ldots,6$.
The parameters $l,m$ parametrise the quadratic residues and nonresidues mod 13, which behave
slightly differently.

Curiously, the doubles of the even parts of both the Haagerup and Asaeda--Haagerup subfactors
are likewise built from the principal series, for $p=13$ and $p=17$ respectively, specifically from one of the
$(p+1)/2$-dimensional irreps in $\rho^{(p);\lambda}$ for the order-2 $\lambda$.  

It would be interesting to investigate the possibility of fitting the modular data
of the extended Haagerup into an infinite sequence.  This would be somewhat analogous to doing it for the
Haagerup. The latter was done in \cite{MR2837122}, but what made that possible was that
there was already an infinite family to which the Haagerup hypothetically belonged \cite{MR1832764},
and the first several subfactors in that sequence were already known to exist \cite{MR2837122}.
Doing this for the extended Haagerup would be a much greater challenge, but a very interesting one!

In particular, we learnt above that
the SL$(2,\bbZ)$-representation for the extended Haagerup is isomorphic to $\rho_{13}\oplus \rho_5\oplus 1\oplus 1
\oplus 1$, where $\rho_5$ is the Steinberg representation of SL$(2,\bbZ_5)$
and  $\rho_{13}$ lies in the principal series of SL$(2,\bbZ_{13})$. So we may look
for modular data isomorphic to $\rho_p\oplus \rho_r\oplus n\,1$ for some $n\in
\bbZ_{\ge 0}$ and some primes $r,p$, where again $\rho_r$ is Steinberg and $\rho_p$ lies in the principal series (and perhaps corresponds to the unique $\lambda\in\bbZ^\times_p$ of
maximal odd order). In the expression for $S$
in Theorem \ref{thm:S}, there are six awkward submatrices, namely $U,V,W,A,B,C$.
But thanks to Lemma \ref{lem:basic}, $W$ resp. $A,B,C$ can be read off from 
$\rho_r \left(\begin{smallmatrix}0 & -1 \\ 1 & 0\end{smallmatrix}\right)$ resp.\ $\rho_p \left(\begin{smallmatrix}0 & -1 \\ 1 & 0\end{smallmatrix}\right)$, 
and as we explained in this appendix those matrices are readily computed. Moreover
unitarity of $S$ forces $n=r-2$. So
only the symmetric $n$-by-$n$ matrix $U$ and the $n$-by-$(p{-}1)$ matrix $V$
need to be identified. They are directly obtained from $\rho_r \left(\begin{smallmatrix}0 & -1 \\ 1 & 0\end{smallmatrix}\right)$ and $\rho_p \left(\begin{smallmatrix}0 & -1 \\ 1 & 0\end{smallmatrix}\right)$ by the change-of-basis
matrix we call $Q$ and, as we see in Section \ref{sec:endgame}, $Q$ takes a very simple form
for the extended Haagerup.
We haven't pursued this  any further.

\section*{Acknowledgements}
Scott Morrison was supported by a Discovery Early Career Research Award DE120100232, Discovery Projects `Subfactors and
symmetries' DP140100732 and `Low dimensional categories' DP160103479 from the Australian Research Council, and DOD-DARPA grant HR0011-12-1-0009.
Terry Gannon was supported by an NSERC Discovery grant; he thanks ANU for its generosity during two very pleasant visits. We thank David Evans, Pinhas Grossman and Kevin Walker for interesting discussions. Parts of this paper were written during visits to the Banff International Research Station
and the Hausdorff Institute for Mathematics, and we thank both for providing such stimulating environments.


\bibliographystyle{alpha}
\bibliography{bibliography/bibliography}

\end{document}